\begin{document}

\numberwithin{equation}{section}

\def\comment#1#2{\textcolor{blue}{(#1: #2)}}
\def\red#1{\textcolor{red}{#1}}

\theoremstyle{plain}
\newtheorem{theorem}{Theorem}[section]
\newtheorem{conjecture}[theorem]{Conjecture}
\newtheorem{corollary}[theorem]{Corollary}
\newtheorem{definition}[theorem]{Definition}
\newtheorem{lemma}[theorem]{Lemma}
\newtheorem{proposition}[theorem]{Proposition}
\newtheorem{problem}[theorem]{Problem}

\theoremstyle{definition}
\newtheorem{algorithm}[theorem]{Algorithm}
\newtheorem{construction}[theorem]{Construction}
\newtheorem{example}[theorem]{Example}
\newtheorem{remark}[theorem]{Remark}

\def\ldiv{\backslash}
\def\im#1{\mathrm{Im}(#1)}
\def\ker#1{\mathrm{Ker}(#1)}
\def\aut#1{\mathrm{Aut}(#1)}
\def\aff#1{\mathrm{Aff}(#1)}                        
\def\qaff#1{\mathcal Q_{\mathrm{Aff}}(#1)}          
\def\rmlt#1{\mathrm{RMlt}(#1)}
\def\mlt#1{\mathrm{Mlt}(#1)}
\def\inn#1{\mathrm{Inn}(#1)}
\def\dis#1{\mathrm{Dis}(#1)}
\def\qhom#1{\mathcal Q_{\mathrm{Hom}}(#1)}          
\def\qgal#1{\mathcal Q_{\mathrm{Gal}}(#1)}          
\def\Z{\mathbb Z}
\def\F{\mathbb F}
\def\cq#1{\mathrm{Cjg}(#1)} 
\def\conj#1{\phi_{#1}} 
\def\sym#1{S_{#1}}

\def\gen#1{\langle\, #1 \,\rangle}
\newcommand\Soc{\mathrm{Soc}}
\newcommand\PSL{\mathrm{PSL}}
\newcommand\GL{\mathrm{GL}}
\newcommand\SL{\mathrm{SL}}


\title{Connected quandles and transitive groups}

\author{Alexander Hulpke}

\address[Hulpke]{Department of Mathematics, Colorado State University, 1874 Campus Delivery, Ft. Collins, Colorado 80523, U.S.A.}

\author{David Stanovsk\'y}

\address[Stanovsk\'y]{Department of Algebra, Faculty of Mathematics and Physics, Charles University, Sokolovsk\'a 83, Praha 8, 18675, Czech Republic}

\author{Petr Vojt\v{e}chovsk\'y}

\address[Stanovsk\'y, Vojt\v{e}chovsk\'y]{Department of Mathematics, University of Denver, 2280 S Vine St, Denver, Colorado 80208, U.S.A.}

\email[Hulpke]{hulpke@math.colostate.edu}
\email[Stanovsk\'y]{stanovsk@karlin.mff.cuni.cz}
\email[Vojt\v{e}chovsk\'y]{petr@math.du.edu}

\begin{abstract}
We establish a canonical correspondence between connected quandles and certain configurations in transitive groups, called quandle envelopes. This correspondence allows us to efficiently enumerate connected quandles of small orders, and present new proofs concerning connected quandles of order $p$ and $2p$.
We also present a new characterization of connected quandles that are affine.
\end{abstract}

\keywords{Quandle, connected quandle, homogeneous quandle, affine quandle, enumeration of quandles, quandle envelope, transitive group of degree $2p$.}

\subjclass[2000]{Primary: 57M27. Secondary: 20N02, 20B10.}

\thanks{Research partially supported by the Simons Foundation Collaboration
Grant~244502 to Alexander Hulpke, the GA\v CR grant 13-01832S to David Stanovsk\'y, and the Simons Foundation Collaboration
Grant 210176 to Petr Vojt\v{e}chovsk\'y.}

\maketitle

\section{Introduction}

\subsection{Motivation}

Let $Q=(Q,\cdot)$ be a set with a single binary operation. Then $Q$ is a \emph{rack} if all \emph{right translations}
\begin{displaymath}
    R_x:Q\to Q,\quad y\mapsto yx
\end{displaymath}
are automorphisms of $Q$. If the rack $Q$ is idempotent, that is, if $xx=x$ for all $x\in Q$, then $Q$ is a \emph{quandle}.

Consider the \emph{right multiplication group}
\begin{displaymath}
    \rmlt{Q}=\gen{ R_x: x\in Q},
\end{displaymath}
and note that $Q$ is a rack if and only if $\rmlt Q$ is a subgroup of the automorphism group $\aut Q$. A rack $Q$ is said to be \emph{connected} (also \emph{algebraically connected} or \emph{indecomposable}) if $\rmlt Q$ acts transitively on $Q$. The main subject of this work are connected quandles.

\medskip

An important motivation for the study of quandles is the quest for computable invariants of knots and links. Connected quandles are of prime interest here because all colors used in a knot coloring fall into the same orbit of transitivity.

From a broader perspective, quandles are a special type of set-theoretical solutions to the quantum Yang-Baxter equation \cite{Dri,Eis} and can be used to construct Hopf algebras \cite{AG}. There are indications, such as \cite{ESG}, that understanding racks and quandles, particularly the connected ones, is an important step towards understanding general set-theoretical solutions of the Yang-Baxter equation.

\medskip

Our main result, Theorem \ref{Th:Canonical}, is a correspondence between connected quandles and certain configurations in transitive groups. Some variants of this representation were discovered independently in \cite{EGTY,Gal-ldq,Joy,Pie}, but none of these works contains a complete characterization of the configurations as in Theorem \ref{Th:Canonical}, nor a discussion of the isomorphism problem as in Theorem \ref{Th:CanIso}. Using the correspondence, we reprove (and occasionally extend) several known results on connected quandles in a simpler and faster way. We focus on enumeration of ``small'' connected quandles, namely those of order less than $48$ (see Section \ref{Sc:Small} and Algorithm \ref{Alg:Enum}) and those with $p$ or $2p$ elements (see Section \ref{Sc:P}). Our proof of non-existence of connected quandles with $2p$ elements, for any prime $p>5$, is based on a new group-theoretical result for transitive groups of degree $2p$, Theorem \ref{thmperm}.

\medskip
The modern theory of quandles originated with Joyce's paper \cite{Joy} and the introduction of the knot quandle, a complete invariant of oriented knots. Subsequently, quandles have been used as the basis of various knot invariants \cite{Car,CJKLS,CESY} and in algorithms on knot recognition \cite{CESY,FLS}.

But the roots of quandle theory are much older, going back to self-distributive quasigroups, or \emph{latin quandles} in today's terminology, see \cite{Sta-latin} for a comprehensive survey of results on latin quandles and their relation to the modern theory. Another vein of results has been motivated by the abstract properties of reflections on differentiable manifolds \cite{Kik,Loos}, resulting in what is now called \emph{involutory quandles} \cite{Sta-involutory}. Yet another source of historical examples is furnished by conjugation in groups, which eventually led to the discovery of the above-mentioned knot quandle by Joyce and Matveev \cite{Joy,Mat}.

\medskip

Quandles have also been studied as algebraic objects in their own right, and we will now briefly summarize the most relevant results. Every quandle decomposes into orbits of transitivity of the natural action of its right multiplication group. An attempt to understand the orbit decomposition was made in \cite{EGTY,NW}, and a full description has been obtained in two special cases: for medial quandles \cite{JPSZ} and for involutory quandles \cite{Pie}. The orbits are not necessarily connected, but they share certain properties with connected quandles.

There have been several attempts to understand the structure of connected quandles, see e.g. \cite{AG}. In our opinion, the homogeneous representation reviewed in Section \ref{Sc:Homogeneous} is most useful in this regard. It was introduced by Galkin and Joyce \cite{Gal-ldq,Joy}, and led to several structural and enumeration results, such as \cite{ESG,Gal-survey,Ven}. Some of them will be presented in Sections~\ref{Sc:Small} and~\ref{Sc:P}. A classification of simple quandles can be found in~\cite{AG,Joy-simple}.

\subsection{Summary of results}

The paper is written as a self-contained introduction to connected quandles. Therefore, in the next two sections, we review the theory necessary for proving the main result. Although the opening sections contain no original ideas, our presentation is substantially different from other sources. We prove the main result in Section \ref{Sc:Canonical}, and the rest of the paper is concerned with its applications.

In Section \ref{Sc:Basic} we develop basic properties of quandles in relation to the right multiplication group and its derived subgroup. In Section \ref{Sc:Homogeneous} we introduce the homogeneous representation (Construction \ref{Co:Homogeneous}) and characterize homogeneous quandles as precisely those obtained by this construction (Theorem \ref{Th:Homogeneous}). In Section \ref{Sc:Minimal} we discuss homogenous representations that are minimal with respect to the underlying group (Theorem \ref{Th:Minimal}).

In Section \ref{Sc:Canonical} we prove the main result (Theorem \ref{Th:Canonical}), a canonical correspondence between connected quandles and quandle envelopes. We also describe all isomorphisms between two connected quandles in the canonical representation (Lemma \ref{Lm:CanIso}). As a consequence, we solve the isomorphism problem (Theorem \ref{Th:CanIso}) and describe the automorphism group (Proposition \ref{Pr:Aut}).

Then we focus on two particular classes of connected quandles. In Section \ref{Sc:Latin} we characterize latin quandles in terms of their homogeneous and canonical representations (Propositions \ref{Pr:Latin1} and \ref{Pr:Latin2}). Section \ref{Sc:Affine} contains a characterization of connected affine quandles (Theorem \ref{Th:Affine}): we show that a connected quandle is affine if and only if it is medial if and only if its right multiplication group is metabelian.

The rest of the paper is devoted to enumeration. In Section \ref{Sc:Small} we present an algorithm for enumeration of connected quandles, which is similar to but several orders of magnitude faster than the recent algorithm of Vendramin \cite{Ven}. In addition, using combinatorial and geometric methods, we construct several families of connected quandles, relying on Theorem \ref{Th:Canonical} for a simple verification of connectedness.

In Section \ref{Sc:P} we investigate quandles of size $p$, $p^2$ and $2p$, where $p$ is a prime, using again the correspondence of Theorem \ref{Th:Canonical}. First, we show that any connected quandle of prime power order has a solvable right multiplication group (Proposition \ref{Pr:Solvable}). Then we give a new and conceptually simple proof that every connected quandle of order $p$ is affine. (This has been proved already in \cite{ESG} and, likewise, our proof relies on a deep result of Kazarin about conjugacy classes of prime power order.) Finally, we show in Theorem \ref{thmperm} that transitive groups of order $2p$, $p>5$ cannot contain certain configurations that are necessary for the existence of quandle envelopes. As a consequence, we deduce that there are no connected quandles of order $2p$, $p>5$, a result obtained already by McCarron \cite{McC} by means of Cayley-like representations.

\subsection{Terminology and notation}

Quandles have been rediscovered in several disguises 
and the terminology therefore varies greatly. For the most part we keep the modern quandle terminology that emerged over the last 15 years. However, in some cases we use the older and more general terminology for binary systems developed to a great extent by R.~H.~Bruck in his 1958 book \cite{Bru}. Bruck's terminology is used fairly consistently in universal algebra, semigroup theory, loop theory and other branches of algebra. For instance, we speak of ``right translations'' rather than ``inner mappings.''

\medskip
Every quandle is \emph{right distributive}, i.e., it satisfies the identity $(yz)x=(yx)(zx)$, expressing the fact that $R_x$ is an endomorphism.
A quandle is called \emph{medial} if it satisfies the identity $(xy)(uv)=(xu)(yv)$.

We apply all mappings to the right of their arguments, written as a superscript. Thus $x^\alpha$ means $\alpha$ evaluated at $x$. To save parentheses, we use $x^{\alpha\beta}$ to mean $(x^\alpha)^\beta$, while $x^{\alpha^\beta}$ stands for $x^{(\alpha^\beta)}$.

Let $G$ be a group. For $y\in G$ we denote by $\phi_y$ the conjugation map by $y$, that is, $x^{\phi_y} =y^{-1}xy$ for all $x\in G$. As usual, we use the shorthand $x^y$ instead of $x^{\phi_y}$, and we let $[x,y] = x^{-1}x^y$. Since $(x^{-1})^y = (x^y)^{-1}$, we denote both of these elements by $x^{-y}$.

For $\alpha\in\aut{G}$ we let $C_G(\alpha) = \{z\in G\,:\,z^\alpha=z\}$ be the centralizer of $\alpha$. We write $C_G(x)$ for $C_G(\conj x)$.

If $G$ acts on $X$ and $x\in X$, we let $G_x = \{g\in G\,:\,x^g=x\}$ be the stabilizer of $x$, and $x^G = \{x^g\,:\,g\in G\}$ the orbit of $x$.

Note that for any binary system $(Q,\cdot)$, $a\in Q$ and $\alpha\in\aut Q$, the mapping $R_a^{\,\alpha}$ is equal to $R_{a^\alpha}$, because for every $x\in Q$ we have
\begin{equation}\label{Eq:RConj}
    x^{R_a^{\,\alpha}} = x^{\alpha^{-1}R_a\alpha} = (x^{\alpha^{-1}}\cdot a)^\alpha =  x\cdot a^\alpha = x^{R_{a^\alpha}}.
\end{equation}
Consequently, if $R_a$ is a permutation, then $R_a^{-\alpha} = (R_a^{\,\alpha})^{-1} = R_{a^\alpha}^{-1}$. We will usually use this observation freely, without an explicit reference to \eqref{Eq:RConj}.

\section{The group of displacements}\label{Sc:Basic}

In this section we present basic properties of a certain subgroup of the right multiplication group, called the \emph{group of displacements} (or the \emph{transvection group}). Nearly all results proved in this section can be found in \cite[Section 5]{Joy} or \cite[Section 1]{Joy-simple}, often without a proof. The only fact we were not able to find elsewhere is Proposition \ref{Pr:DQ}(iv). Note that most results here apply to general racks, too.

For a rack $Q$, define the \emph{group of displacements} as
\begin{displaymath}
    \dis Q=\gen{ R_a^{-1}R_b\,:\,a,\,b\in Q}.
\end{displaymath}
Note that
\begin{displaymath}
    \rmlt Q'\leq \dis Q\leq\rmlt Q\leq\aut Q.
\end{displaymath}
The first inequality follows from \eqref{Eq:RConj}, as $[R_a,R_b] = R_a^{-1}R_a^{\,R_b}= R_a^{-1}R_{ab}$ for every $a,b\in Q$.
We also have $R_aR_b^{-1}\in\dis Q$ for every $a,b\in Q$, as $R_aR_b^{-1} = R_b^{-1}R_a^{\,R_b^{-1}} = R_b^{-1}R_c$, where $c = a^{R_b^{-1}}$.

\begin{proposition}\label{Pr:DQ}
Let $Q$ be a rack. Then:
\begin{enumerate}
\item[(i)] $\dis Q\unlhd\aut Q$ and $\rmlt Q\unlhd\aut Q$.
\item[(ii)] The group $\rmlt{Q}/\dis{Q}$ is cyclic.
\item[(iii)] $\dis Q = \{R_{a_1}^{k_1}\dots R_{a_n}^{k_n}\,:\,n\ge 0$, $a_i\in Q$ and $\sum_{i=1}^n k_i=0\}$.
\item[(iv)] If $Q$ is a quandle, the natural actions of $\rmlt{Q}$ and $\dis{Q}$ on $Q$ have the same orbits.
\end{enumerate}
\end{proposition}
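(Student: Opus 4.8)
The plan is to prove the four items more or less in the order given, since (i) and (ii) feed into (iii), and (iii) or a direct argument then gives (iv). For (i), I would use the identity \eqref{Eq:RConj}: since $R_a^{\,\alpha}=R_{a^\alpha}$ for every $\alpha\in\aut Q$, conjugating a generator $R_a$ of $\rmlt Q$ by $\alpha$ yields another generator $R_{a^\alpha}$, so $\rmlt Q\unlhd\aut Q$; conjugating a generator $R_a^{-1}R_b$ of $\dis Q$ by $\alpha$ yields $R_{a^\alpha}^{-1}R_{b^\alpha}\in\dis Q$, so $\dis Q\unlhd\aut Q$ as well. (In particular $\dis Q\unlhd\rmlt Q$, which is all that is needed for (ii).)

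For (ii), consider the map $\rmlt Q\to\Z$ that is meant to record the ``total exponent'': informally, send a word $R_{a_1}^{k_1}\cdots R_{a_n}^{k_n}$ to $\sum k_i$. I would make this rigorous by observing that the assignment $R_a\mapsto 1$ extends to a homomorphism $\varepsilon\colon\rmlt Q\to\Z$, because $\rmlt Q$ is generated by the $R_a$ subject only to relations that are consequences of $R_a^{\,R_b}=R_{ab}$, and each such relation has total exponent zero on both sides (the conjugation relation reads $R_b^{-1}R_aR_b=R_{ab}$, exponent $1$ on each side). Concretely: the relation $R_a^{\,\alpha}=R_{a^\alpha}$ shows that conjugation permutes the generating set, so the normal closure of $\{R_a\}$ is all of $\rmlt Q$, and one checks that mapping each $R_a$ to the generator of $\Z$ respects all defining relations; hence $\varepsilon$ is well defined. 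Then $\im\varepsilon$ is a subgroup of $\Z$, hence cyclic, and I claim $\ker\varepsilon=\dis Q$. The inclusion $\dis Q\subseteq\ker\varepsilon$ is clear since each $R_a^{-1}R_b$ has image $0$. Conversely $\rmlt Q/\dis Q$ is generated by the images of the $R_a$, and these images all coincide (since $R_a\dis Q = R_b\dis Q$ because $R_b^{-1}R_a\in\dis Q$), so $\rmlt Q/\dis Q$ is cyclic, generated by a single element $R_aR_a^{-1}$... wait, $R_a\dis Q$; and $\varepsilon$ induces an injection $\rmlt Q/\dis Q\hookrightarrow\Z$, forcing $\ker\varepsilon=\dis Q$ and proving both (ii) and the cleaner description needed below. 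I expect the mild technical nuisance here to be justifying that $\varepsilon$ is well defined without writing down a presentation of $\rmlt Q$; the clean way is to argue that $R_a\dis Q=R_b\dis Q$ for all $a,b$, so $\rmlt Q/\dis Q$ is cyclic generated by the common coset $g\dis Q$ with $g=R_a$, and then every element of $\rmlt Q$ has the form $g^k d$ with $d\in\dis Q$; the exponent $k\bmod(\text{order of }g\dis Q)$ is well defined, giving both (ii) and (iii) simultaneously.

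For (iii), the inclusion $\supseteq$ is immediate: a product $R_{a_1}^{k_1}\cdots R_{a_n}^{k_n}$ with $\sum k_i=0$ can be rewritten, by inserting telescoping factors $R_{a_1}^{-k_1}R_{a_1}^{k_1}$ etc., as a product of conjugates of elements $R_a^{-1}R_b$ (equivalently, it lies in $\ker\varepsilon=\dis Q$). For $\subseteq$, every generator $R_a^{-1}R_b$ of $\dis Q$ is visibly of the stated form with $n=2$, $k_1=-1$, $k_2=1$, and the set on the right is closed under products and inverses (concatenate words; exponent sums add, and negate all $k_i$ for inverses), so it is a subgroup containing all generators of $\dis Q$, hence contains $\dis Q$. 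This also re-derives $\ker\varepsilon\subseteq\dis Q$ cleanly.

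For (iv), here is where idempotence enters, and I expect this to be the one genuinely quandle-specific point. Since $\dis Q\leq\rmlt Q$, every $\dis Q$-orbit is contained in an $\rmlt Q$-orbit, so it suffices to show the $\rmlt Q$-orbit of any $a\in Q$ equals its $\dis Q$-orbit, i.e.\ that $a^{\rmlt Q}\subseteq a^{\dis Q}$. Take $b=a^g$ with $g=R_{a_1}^{k_1}\cdots R_{a_n}^{k_n}\in\rmlt Q$. Using idempotence, $a=a^{R_a}$, so we may freely pre-multiply $g$ by a power $R_a^{-m}$ of $R_a$ without changing $a^g$; choosing $m=\sum k_i$ gives $g'=R_a^{-m}g$ with exponent sum $0$, hence $g'\in\dis Q$ by (iii), and $a^{g'}=a^{R_a^{-m}g}=a^g=b$. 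Therefore $b\in a^{\dis Q}$, as desired. The main obstacle, such as it is, is making sure the well-definedness arguments in (ii)--(iii) are airtight; everything else is routine once the homomorphism $\varepsilon$ (or equivalently the exponent-sum coset description) is in hand.
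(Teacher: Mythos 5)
Your arguments for (i) and (iv), and the coset version of (ii)--(iii), are correct and essentially identical to the paper's proof: (i) is \eqref{Eq:RConj} applied to the generators; for (ii) the paper fixes $e$, notes $\dis{Q}R_a=\dis{Q}R_e$ for all $a$, deduces $\dis{Q}\alpha=\dis{Q}R_e^{\sum k_i}$ for any word $\alpha$, and concludes that the quotient is $\gen{\dis{Q}R_e}$; for (iv) the paper post-multiplies by $R_y^{-k}$ at the target where you pre-multiply by $R_a^{-m}$ at the source, both exploiting idempotence in the same way.

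The one step that would genuinely fail is the homomorphism $\varepsilon\colon\rmlt{Q}\to\Z$, $R_a\mapsto1$. This is not a ``mild technical nuisance'': such a homomorphism does not exist in general. The group $\rmlt{Q}$ is a concrete permutation group, not the group presented by the relations $R_a^{\,R_b}=R_{ab}$ (that presented group is the enveloping group of the rack, of which $\rmlt{Q}$ is usually a proper quotient). For any finite quandle $\rmlt{Q}$ is finite, so every homomorphism to $\Z$ is trivial; already for the projection quandle $xy=x$ one has $R_a=\mathrm{id}$, which must map to $0$, not $1$. In particular the claim that ``$\varepsilon$ induces an injection $\rmlt{Q}/\dis{Q}\hookrightarrow\Z$'' is false: that quotient is typically a nontrivial finite cyclic group. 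Fortunately your fallback is the correct proof and should be the only one you keep: since $R_aR_b^{-1}\in\dis{Q}$, all cosets $\dis{Q}R_a$ coincide, so $\dis{Q}\alpha=\dis{Q}R_e^{\sum k_i}$ for $\alpha=R_{a_1}^{k_1}\cdots R_{a_n}^{k_n}$; this yields (ii) and simultaneously the nontrivial inclusion of (iii), because $\sum k_i=0$ forces $\alpha\in\dis{Q}$. You can then drop both $\varepsilon$ and the telescoping; if you do retain the telescoping, you still owe a small induction showing $R_a^{s}R_b^{-s}\in\dis{Q}$ for $|s|>1$, which is in effect what the paper's separate inductive proof of (iii) supplies.
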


\begin{proof}
Let $G=\rmlt{Q}$ and $D=\dis{Q}$.

(i) By \eqref{Eq:RConj}, conjugating a right translation by an automorphism yields another right translation. Thus the generators of both $G$ and $D$ are closed under conjugation in $\aut{Q}$.

(ii) Fix $e\in Q$ and note that $DR_a = DR_e$ for every $a\in Q$. Given an element $\alpha=R_{a_1}^{k_1}\dots R_{a_n}^{k_n}\in G$, we then have $D\alpha = DR_e^{k_1+\cdots+k_n}$, proving that $G/D = \gen{ DR_e}$.

(iii) Let $S$ be the set in question. Since the defining generators of $D$ belong to $S$, and since $S$ is easily seen to be a subgroup of $G$, we have $D\le S$. For the other inclusion, we note that every $\alpha\in S$ can be written as $R_{a_1}^{k_1}\dots R_{a_n}^{k_n}$, where not only $\sum_i k_i=0$ but also $k_i=\pm 1$. Assuming such a decomposition, we prove by induction on $n$ that $\alpha\in D$.

If $n=0$ then $\alpha=1$, the case $n=1$ does not occur, and if $n=2$, we have either $\alpha=R_aR_b^{-1}$ or $\alpha=R_a^{-1}R_b$, both in $D$. Suppose that $n>2$.

If $k_1=k_n$ then there is $1<m<n$ such that $\sum_{i<m}k_i=0$ and $\sum_{i\geq m}k_i=0$. Let $\beta=R_{a_1}^{k_1}\ldots R_{a_{m-1}}^{k_{m-1}}$ and $\gamma=R_{a_m}^{k_m}\ldots R_{a_n}^{k_n}$. Then $\beta$, $\gamma\in D$, and so $\alpha=\beta\gamma\in D$.

If $k_1\ne k_n$ then $\alpha=R_a^k\beta R_b^{-k}$ for some $a$, $b\in Q$, $k= \pm1$ and $\beta=R_{a_2}^{k_2}\ldots R_{a_{n-1}}^{k_{n-1}}$. Note that $\sum_{2\leq i\leq n-1}k_i=0$, hence $\beta\in D$. We have $\alpha = \beta(R_a^k)^\beta R_b^{-k} = \beta R_{a^\beta}^kR_b^{-k}$, and since $R_{a^\beta}^kR_b^{-k}\in D$, we are done.

(iv) Let $\alpha = R_{a_1}^{k_1}\dots R_{a_n}^{k_n}\in G$ and put $k=k_1+\cdots+k_n$. Let $x$, $y\in Q$ be such that $x^\alpha = y$. By (iii), we have $\beta = \alpha R_y^{-k}\in D$ and $x^\beta = x^{\alpha R_y^{-k}} = y^{R_y^{-k}} = y$, using idempotence in the last step.
\end{proof}

The orbits of transitivity of the group $\rmlt Q$ (or, equivalently, of the group $\dis Q$) in its natural action on $Q$ will be referred to simply as \emph{the orbits of $Q$}. Given $e\in Q$, we denote by $e^Q$ the orbit containing $e$. Orbits are subquandles, not necessarily connected.

\begin{example}\label{Ex:3}
In general, the proper inclusion $\rmlt Q' < \dis Q$ can occur in quandles. The smallest example has three elements and two orbits, and is defined by the following Cayley table:
\begin{displaymath}
    \begin{array}{c|ccc}
        Q & 1 & 2 & 3 \\\hline
        1 & 1 & 1 & 1 \\
        2 & 3 & 2 & 2 \\
        3 & 2 & 3 & 3 \\
    \end{array}
\end{displaymath}
\end{example}

However, in connected racks, the equality $\rmlt Q' = \dis{Q}$ always holds.

\begin{proposition}\label{Pr:DQConnected}
If $Q$ is a connected rack then $\rmlt{Q}'=\dis{Q}$.
\end{proposition}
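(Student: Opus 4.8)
The plan is to show that $\dis Q \leq \rmlt Q'$ when $Q$ is connected; the reverse inclusion $\rmlt Q' \leq \dis Q$ is already established in the excerpt. Since $\dis Q$ is generated by the elements $R_a^{-1}R_b$ with $a,b \in Q$, it suffices to show each such element lies in $\rmlt Q'$, equivalently that $R_a$ and $R_b$ have the same image in the abelianization $\rmlt Q / \rmlt Q'$. Write $G = \rmlt Q$ and $G^{\mathrm{ab}} = G/G'$, and let $\bar\alpha$ denote the image of $\alpha \in G$ in $G^{\mathrm{ab}}$.

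First I would observe that for any automorphism $\alpha \in \aut Q$ and any $a \in Q$, equation \eqref{Eq:RConj} gives $R_a^{\,\alpha} = R_{a^\alpha}$; in particular, conjugating $R_a$ by any element $g \in G$ yields $R_{a^g}$. Passing to $G^{\mathrm{ab}}$, conjugation becomes trivial, so $\overline{R_a} = \overline{R_{a^g}}$ for every $g \in G$ and every $a \in Q$. In other words, the map $a \mapsto \overline{R_a}$ is constant on each orbit of the natural action of $G$ on $Q$. Since $Q$ is connected, $G$ acts transitively, so there is a single orbit, and therefore $\overline{R_a} = \overline{R_b}$ for all $a,b \in Q$. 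Consequently $R_a^{-1}R_b \in G'$ for all $a,b$, which gives $\dis Q \leq G' = \rmlt Q'$, and combined with the inclusion $\rmlt Q' \leq \dis Q$ noted before the proposition, we get equality.

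The argument is essentially immediate once one notices the interplay between \eqref{Eq:RConj} and the passage to the abelianization, so there is no real obstacle; the only point requiring a moment's care is the direction of the conjugation identity (that $R_a^{\,g} = R_{a^g}$ holds for all $g \in G$, not merely for $g$ ranging over a generating set), but this is exactly the content of \eqref{Eq:RConj} applied with $\alpha = g$, valid since $G \leq \aut Q$. One could alternatively phrase the same computation concretely: for generators $R_a$, $R_b$ of $G$, the commutator $[R_b, R_a] = R_b^{-1}R_b^{\,R_a} = R_b^{-1}R_{ba}$ lies in $G'$, so $\overline{R_b} = \overline{R_{ba}}$, and transitivity of the action propagates this to all pairs; this avoids invoking the abelianization abstractly but is the same idea. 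I expect the author's proof to run along exactly these lines.
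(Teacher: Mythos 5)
Your argument is correct and is essentially the paper's own proof: the paper takes $\alpha\in\rmlt{Q}$ with $b=a^\alpha$ and writes $R_a^{-1}R_b=R_a^{-1}R_a^{\,\alpha}=[R_a,\alpha]\in\rmlt{Q}'$, which is exactly your observation that $a\mapsto\overline{R_a}$ is constant on $G$-orbits, stated without the abelianization packaging.
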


\begin{proof}
It remains to prove that every generator $R_a^{-1}R_b$ of $\dis{Q}$ belongs to $\rmlt{Q}'$.
Let $\alpha\in\rmlt{Q}$ be such that $b=a^\alpha$. Then
$R_a^{-1}R_b = R_a^{-1}R_{a^\alpha} = R_a^{-1}R_a^{\alpha} = [R_a,\alpha]\in\rmlt{Q}'$.
\end{proof}

In some cases, the structure of $\dis Q$ corresponds nicely to the algebraic properties of $Q$. For instance, the following characterization of mediality can be traced back to \cite{Nob}.

\begin{proposition}\label{Pr:DQ2}
Let $Q$ be a rack. Then:
\begin{enumerate}
	\item[(i)] $\dis Q$ is trivial if and only if the multiplication in $Q$ does not depend on the second argument (in quandles, this is equivalent to the multiplication being the left projection).
	\item[(ii)] $\dis Q$ is abelian if and only if $Q$ is medial.
\end{enumerate}
\end{proposition}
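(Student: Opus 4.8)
The plan is to reformulate mediality as a single identity among right translations and then deduce part~(ii) by a short computation inside $\dis Q$; part~(i) is immediate. Indeed, since $\dis Q=\gen{R_a^{-1}R_b:a,b\in Q}$, the group $\dis Q$ is trivial precisely when $R_a=R_b$ for all $a,b\in Q$, and this says exactly that the value of $y\cdot x$ does not depend on $x$. If in addition $Q$ is a quandle, idempotence gives $y\cdot x=y\cdot y=y$, so the operation is the left projection; conversely the left projection has every $R_x=\mathrm{id}$, hence $\dis Q=1$.

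For part~(ii), the key step is the equivalence: $Q$ is medial if and only if
\begin{displaymath}
    R_yR_v^{-1}R_u=R_uR_v^{-1}R_y \qquad\text{for all } u,v,y\in Q,
\end{displaymath}
which I will refer to as $(\mathrm{M})$. To prove this, expand both sides of the medial law $(xy)(uv)=(xu)(yv)$ as compositions of right translations: $(xy)(uv)=x^{R_yR_{uv}}$ and $(xu)(yv)=x^{R_uR_{yv}}$, so mediality is equivalent to $R_yR_{uv}=R_uR_{yv}$ for all $u,v,y$. Now substitute $R_{uv}=R_v^{-1}R_uR_v$ and $R_{yv}=R_v^{-1}R_yR_v$, which are instances of \eqref{Eq:RConj}, and cancel $R_v$ on the right to obtain $(\mathrm{M})$.

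Granting $(\mathrm{M})$, both implications are short. If $\dis Q$ is abelian, then for fixed $u,v,y$ the generators $R_u^{-1}R_y$ and $R_v^{-1}R_u$ of $\dis Q$ commute, so $(R_u^{-1}R_y)(R_v^{-1}R_u)=(R_v^{-1}R_u)(R_u^{-1}R_y)=R_v^{-1}R_y$; multiplying on the left by $R_u$ gives $(\mathrm{M})$, and $Q$ is medial. Conversely, if $Q$ is medial then $(\mathrm{M})$ holds, and taking inverses in $(\mathrm{M})$ yields the companion identity $R_a^{-1}R_bR_c^{-1}=R_c^{-1}R_bR_a^{-1}$; using this and then $(\mathrm{M})$ one computes
\begin{align*}
    (R_a^{-1}R_b)(R_c^{-1}R_d)
        &= R_a^{-1}R_bR_c^{-1}R_d = R_c^{-1}R_bR_a^{-1}R_d\\
        &= R_c^{-1}R_dR_a^{-1}R_b = (R_c^{-1}R_d)(R_a^{-1}R_b)
\end{align*}
for arbitrary $a,b,c,d\in Q$, so any two defining generators of $\dis Q$ commute and $\dis Q$ is abelian. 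The only step I expect to require genuine care is the reformulation $(\mathrm{M})$ --- in particular keeping track of the order of composition in $x^{R_yR_{uv}}$ and of the exact placement of the inverse when rewriting $R_{uv}$ as $R_v^{-1}R_uR_v$; everything after that is a couple of lines of cancellation in $\dis Q$.
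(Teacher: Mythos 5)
Your proposal is correct and follows essentially the same route as the paper: part (i) by inspecting the generating set, and part (ii) by reducing mediality to the identity $R_yR_v^{-1}R_u=R_uR_v^{-1}R_y$ via \eqref{Eq:RConj} and then doing the same two short cancellation computations (the paper happens to phrase the converse with generators of the form $R_aR_b^{-1}$ rather than $R_a^{-1}R_b$, but this is immaterial). No gaps.
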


\begin{proof}
(i) An inspection of the generating set shows that $\dis Q$ is trivial iff $R_a=R_b$ for every $a$, $b\in Q$. If $Q$ is a quandle, we then get $ab=a^{R_b} = a^{R_a} = a$.

(ii) Note that the following identities are equivalent: $Q$ is medial, $R_yR_{uv} = R_uR_{yv}$, $R_yR_v^{-1}R_uR_v = R_uR_v^{-1}R_yR_v$,
\begin{equation}\label{Eq:DQ2}
    R_yR_v^{-1}R_u=R_uR_v^{-1}R_y.
\end{equation}

Suppose that $\dis{Q}$ is abelian. Then $(R_yR_v^{-1})(R_uR_y^{-1}) = (R_uR_y^{-1})(R_yR_v^{-1}) = R_uR_v^{-1}$, which yields \eqref{Eq:DQ2} upon applying $R_y$ to both sides. Hence $Q$ is medial.

Conversely, if $Q$ is medial, then \eqref{Eq:DQ2} holds, and its inverse yields $R_y^{-1}R_vR_u^{-1}=R_u^{-1}R_vR_y^{-1}$, so $R_xR_y^{-1}R_vR_u^{-1} = R_xR_u^{-1}R_vR_y^{-1} = R_vR_u^{-1}R_xR_y^{-1}$, where we have again used \eqref{Eq:DQ2} in the last equality. Hence $\dis{Q}$ is abelian.
\end{proof}

A prototypical example of medial quandles is the following construction.

\begin{example}\label{Ex:Affine}
Let $A=(A,+)$ be an abelian group and $f\in\aut A$. Define the \emph{affine} quandle (also called \emph{Alexander} quandle) as
\begin{displaymath}
    \qaff{A,f}=(A,*),\quad x*y=x^f + y^{1-f}.
\end{displaymath}
A straightforward calculation shows that $(A,*)$ is indeed a quandle. For mediality, observe that
\begin{displaymath}
    (x*y)*(u*v) = (x^f+y^{1-f})*(u^f+v^{1-f}) = x^{f^2} + y^{(1-f)f} + u^{f(1-f)}+v^{(1-f)^2}
\end{displaymath}
is invariant under the interchange of $y$ and $u$.
\end{example}

Alternatively, given an $R$-module $M$ and an invertible element $r\in R$, then $(M,*)$ with
\begin{displaymath}
    x*y=xr+y(1-r)
\end{displaymath}
is an affine quandle, namely $\qaff{A,f}$ with $A=(M,+)$ and $x^f=xr$. The two definitions are equivalent, and without loss of generality, we can consider $R=\Z[t,t^{-1}]$, the ring of integral Laurent series, and $r=t$.

Most affine quandles are not connected, and most medial quandles are not affine (e.g. the one in Example \ref{Ex:3}). However, we prove later that all connected medial quandles are affine. See \cite{Hou} for comprehensive results on affine quandles.

\section{Homogeneous quandles}\label{Sc:Homogeneous}

An algebraic structure $Q$ is called \emph{homogeneous} if the automorphism group $\aut{Q}$ acts transitively on $Q$. Connected quandles are homogeneous by definition, since their right multiplication group is a transitive subgroup of the automorphism group. Not every quandle is homogeneous, as witnessed by the quandle in Example \ref{Ex:3}.

We will now present a well-known construction of homogeneous quandles. Despite some effort, we were not able to trace its origin. It was certainly used by Galkin \cite{Gal-ldq}, who recognized its importance for representing latin quandles, and also by Joyce \cite{Joy} and others in the context of connected quandles. But the construction seems to be much older, see Loos \cite{Loos}, for instance.

Our immediate goal is to prove Joyce's observation that a quandle $Q$ is homogeneous if and only if it is isomorphic to a quandle obtained by Construction \ref{Co:Homogeneous}.

\begin{construction}\label{Co:Homogeneous}
Let $G$ be a group, $f\in\aut G$ and $H\le C_G(f)$. Denote by $G/H$ the set of right cosets $\{Hx\,:\,x\in G\}$. Define
\begin{displaymath}
    \qhom{G,H,f} = (G/H,*),\quad Hx*Hy = H(xy^{-1})^fy.
\end{displaymath}
\end{construction}

\begin{lemma}\label{Lm:Homogeneous}
Let $Q=\qhom{G,H,f}$ be as in Construction \ref{Co:Homogeneous}. Then $Q$ is a homogeneous quandle.
\end{lemma}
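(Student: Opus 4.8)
The plan is to verify directly that $Q=\qhom{G,H,f}=(G/H,*)$ with $Hx*Hy=H(xy^{-1})^fy$ is a well-defined quandle and that $\aut Q$ acts transitively on it. The proof splits into four routine-but-necessary checks: well-definedness of $*$, idempotence, each right translation being a bijective endomorphism (so $Q$ is a quandle), and homogeneity.

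First I would check that $*$ does not depend on the choice of coset representatives. If $Hx=Hx'$ and $Hy=Hy'$, write $x'=hx$, $y'=ky$ with $h,k\in H$. Then $(x'y'^{-1})^f y' = (hxy^{-1}k^{-1})^f ky = h^f (xy^{-1})^f k^{-f} k y$, and since $H\le C_G(f)$ we have $h^f=h\in H$ and $k^{-f}=k^{-1}$, so the whole expression equals $h(xy^{-1})^f y \in H(xy^{-1})^f y$. Hence $Hx'*Hy' = Hx*Hy$. Idempotence is immediate: $Hx*Hx = H(xx^{-1})^f x = H1^f x = Hx$.

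Next, to see $Q$ is a quandle, I fix $Hy\in G/H$ and analyze $R_{Hy}$. Writing $g=y^{f}y^{-1}$ wait — more carefully, $Hx\mapsto Hx*Hy = H(xy^{-1})^f y = Hx^f (y^{-1})^f y$, so $R_{Hy}$ is the map $Hx\mapsto H x^{f} c_y$ where $c_y=(y^{-1})^f y = y^{-f}y$. This is the composite of the map induced by $f$ on $G/H$ (well-defined since $H^f=H$) followed by right translation by $c_y$ in $G/H$; both are bijections of $G/H$, so $R_{Hy}$ is a bijection. That $R_{Hy}$ is an endomorphism, i.e. $(Hx_1*Hx_2)*Hy = (Hx_1*Hy)*(Hx_2*Hy)$, is a direct computation expanding both sides using the formula; the key algebraic identity that makes it work is that conjugation/the map $f$ interacts correctly, and I expect this to reduce to an identity that holds in any group once the $f$'s and the $c_y$ factors are moved around using $H\le C_G(f)$. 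I would just say ``a straightforward computation shows $R_{Hy}\in\aut Q$,'' mirroring the style used for Example~\ref{Ex:Affine}.

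Finally, for homogeneity: for each $g\in G$, the left-translation map $L_g\colon Hx\mapsto Hxg$ is a well-defined bijection of $G/H$, and it is an automorphism of $Q$ because $(Hx*Hy)^{L_g} = H(xy^{-1})^f y g = H(xg(yg)^{-1})^f (yg)$ wait, $xg(yg)^{-1} = xgg^{-1}y^{-1} = xy^{-1}$, so this equals $Hxg * Hyg = (Hx)^{L_g}*(Hy)^{L_g}$. Thus $\{L_g : g\in G\}\le\aut Q$, and this set already acts transitively on $G/H$ (given cosets $Hx,Hy$, take $g=x^{-1}y$, so $(Hx)^{L_g}=Hy$). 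Hence $\aut Q$ is transitive and $Q$ is homogeneous.

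The only place requiring genuine care — and the step I expect to be the main obstacle — is verifying that each $R_{Hy}$ is an endomorphism; everything else is bookkeeping with cosets and the commuting condition $H\le C_G(f)$. In the writeup I would carry out that one computation explicitly and dispatch the others briefly.
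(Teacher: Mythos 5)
Your proposal is correct and follows essentially the same route as the paper's proof (well-definedness, idempotence, right translations are bijective endomorphisms, homogeneity via the maps $Hx\mapsto Hxa$); the one computation you defer, that each $R_{Hy}$ is an endomorphism, does go through exactly as you predict --- both sides of right distributivity expand to $H(x_1x_2^{-1})^{f^2}(x_2y^{-1})^fy$, an identity valid in any group with no further use of $H\le C_G(f)$. The only minor variation is your factorization of $R_{Hy}$ as the $f$-induced map followed by right multiplication by $y^{-f}y$, which replaces the paper's explicit solution of $Hx*Hy=Hz$ but yields the same bijectivity (and note the map you call $L_g$ is right multiplication on cosets, not a left translation of the quandle).
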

\begin{proof}
First we note that the operation $*$ is well defined. Indeed, if $Hx=Hu$ and $Hy=Hv$ then $u=hx$, $v=ky$ for some $h$, $k\in H$, and
\begin{align*}
    H(uv^{-1})^fv &= H(hxy^{-1}k^{-1})^fky = Hh^f(xy^{-1})^f(k^{-1})^fky\\
        &= Hh(xy^{-1})^fk^{-1}ky = H(xy^{-1})^fy,
\end{align*}
using $H\le C_G(f)$.
Idempotence is immediate from $Hx*Hx = H(xx^{-1})^fx= Hx$. For right distributivity we calculate
\begin{align*}
    (Hx*Hz)*(Hy*Hz) &= H(xz^{-1})^fz*H(yz^{-1})^fz = H[(xz^{-1})^fz((yz^{-1})^fz)^{-1}]^f(yz^{-1})^fz\\
        &= H(xy^{-1})^{f^2}(yz^{-1})^fz = H(xy^{-1})^fy*Hz = (Hx*Hy)*Hz.
\end{align*}
To check that all right translations of $Q$ are permutations of $G/H$, note that for $x$, $y$, $z\in G$ we have
\[ Hx*Hy=Hz\ \Leftrightarrow\ H(xy^{-1})^fy = Hz \ \Leftrightarrow\ Hx^f=Hzy^{-1}y^f\ \Leftrightarrow\ Hx = H(zy^{-1})^{f^{-1}}y, \]
where in the last step we applied $f^{-1}$ to both sides and used $H\le C_G(f)$. Hence, given $Hy$, $Hz$, the equation $Hx*Hy=Hz$ has a unique solution $Hx$.

To prove homogeneity, consider for any $a\in G$ the bijection $\varphi_a:Q\to Q$, $Hx\mapsto Hxa$. Since
\[(Hx)^{\varphi_a} * (Hy)^{\varphi_a} = Hxa * Hya = H(xaa^{-1}y^{-1})^f ya = H(xy^{-1})^f ya = (Hx*Hy)^{\varphi_a},\]
$\varphi_a$ is an automorphism of $Q$. For any $Hx$, $Hy$ there is $a\in Q$ such that $(Hx)^{\varphi_a} = Hxa = Hy$, so $\aut{Q}$ acts transitively on $Q$.
\end{proof}

\begin{example}
Affine quandles are homogeneous. Indeed, if $(A,+)$ is an abelian group and $f\in\aut A$, then $\qaff{A,f}=\qhom{A,0,f}$.
\end{example}

\begin{example}
Knot quandles are homogeneous. Let $K$ be a knot, and let $G_K=\pi_1(U_K)$ be the knot group, where $U_K$ is the complement of a tubular neighborhood of $K$. Let $H_K$ be the peripheral subgroup of $G_K$ and $f_K$ the conjugation by the meridian. Then $\qhom{G_K,H_K,f_K}$ is the knot quandle of $K$. See \cite[Corollary 16.2]{Joy} or \cite[Proposition 2]{Mat} for details.
\end{example}

In the special case of $\qhom{G,H,f}$ where $G$ is a permutation group on a set $Q$ and $H=G_e$ for some $e\in Q$, we define the mapping
\begin{equation}\label{Eq:Pie}
    \pi_e:\qhom{G,G_e,f}\to e^G,\quad G_e\alpha\mapsto e^\alpha.
\end{equation}
Since $G_e\alpha = G_e\beta$ holds if and only if $e^\alpha=e^\beta$, the mapping $\pi_e$ is well defined and bijective.

\begin{proposition}\label{Pr:HomogeneousRepresentation}
Let $Q$ be a quandle and $e\in Q$. Let $G$ be a normal subgroup of $\aut{Q}$, and let $f$ be the restriction of the conjugation by $R_e$ in $\aut{Q}$ to $G$. Then $\qhom{G,G_e,f}$ is well defined and isomorphic to the subquandle $e^G$.
\end{proposition}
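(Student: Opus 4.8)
The plan is to verify the two hypotheses of Construction~\ref{Co:Homogeneous} for the triple $(G,G_e,f)$, observe that $e^G$ is a subquandle of $Q$, and then check that the canonical bijection $\pi_e$ of \eqref{Eq:Pie} is in fact a quandle isomorphism $\qhom{G,G_e,f}\to e^G$.

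First I would check well-definedness. Since $G\unlhd\aut Q$ and $R_e\in\aut Q$, conjugation by $R_e$ maps $G$ onto $G$, so its restriction $f$ is a genuine automorphism of $G$. The condition $G_e\le C_G(f)$ is the heart of the matter: here $C_G(f)=\{g\in G:g^{R_e}=g\}$, and by \eqref{Eq:RConj} applied with $a=e$, $\alpha=g$ we have $g^{-1}R_eg=R_e^{\,g}=R_{e^g}$ for every $g\in G$; if moreover $e^g=e$, this reads $R_e^{\,g}=R_e$, i.e. $g$ commutes with $R_e$, i.e. $g^f=g$. Hence $G_e\le C_G(f)$ and $\qhom{G,G_e,f}$ is defined. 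Next, since $R_e$ normalizes $G$ and fixes $e$ (by idempotence $e^{R_e}=e$), the orbit $e^G$ is closed under the operation of $Q$: for $x\in e^G$ and $\beta\in G$ we have $x\cdot e^\beta=x^{R_{e^\beta}}=x^{\beta^{-1}R_e\beta}$, and each of the maps $\beta^{-1}$, $R_e$, $\beta$ carries $e^G$ into itself (using normality of $G$ and $e^{R_e}=e$), so $e^G$ is a subquandle.

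Finally, by \eqref{Eq:Pie} the map $\pi_e\colon G_e\alpha\mapsto e^\alpha$ is a well-defined bijection from $\qhom{G,G_e,f}$ onto $e^G$, so it only remains to see that it preserves multiplication. Using the operation of Construction~\ref{Co:Homogeneous}, then $f=\conj{R_e}|_G$, idempotence, and \eqref{Eq:RConj},
\[
(G_e\alpha * G_e\beta)^{\pi_e} = e^{(\alpha\beta^{-1})^f\beta} = e^{R_e^{-1}\alpha\beta^{-1}R_e\beta} = e^{\alpha\beta^{-1}R_e\beta} = e^{\alpha R_{e^\beta}} = (e^\alpha)^{R_{e^\beta}} = e^\alpha\cdot e^\beta,
\]
which is exactly $(G_e\alpha)^{\pi_e}\cdot(G_e\beta)^{\pi_e}$ in the subquandle $e^G$. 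Thus $\pi_e$ is an isomorphism. I expect the only genuinely delicate point to be the inclusion $G_e\le C_G(f)$, which is precisely where normality of $G$ and the conjugation identity \eqref{Eq:RConj} are used; everything else is routine bookkeeping with cosets and repeated applications of \eqref{Eq:RConj}.
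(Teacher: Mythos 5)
Your proposal is correct and follows essentially the same route as the paper: verify that $f$ is an automorphism of $G$ via normality, establish $G_e\le C_G(f)$ (you via \eqref{Eq:RConj}, the paper via the equivalent pointwise computation $x^{\alpha R_e}=x^{R_e\alpha}$), and then check that $\pi_e$ preserves the product. Your extra verification that $e^G$ is a subquandle is harmless but already covered by the paper's earlier remark that orbits are subquandles.
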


\begin{proof}
Since $f$ is a restriction of the conjugation by $R_e\in\rmlt{Q}\le\aut{Q}$ to a normal subgroup $G$ of $\aut{Q}$, it is indeed an automorphism of $G$. To check $G_e\le C_G(f)$, consider $\alpha\in G_e$. For every $x\in Q$ we have $x^{\alpha R_e} = x^\alpha\cdot e = x^\alpha\cdot e^\alpha = (xe)^\alpha = x^{R_e\alpha}$ and so $\alpha^{R_e} = \alpha$ as required. The quandle $\qhom{G,G_e,f}$ is therefore well defined, with multiplication
\begin{displaymath}
    G_e\alpha*G_e\beta = G_e(\alpha\beta^{-1})^f\beta = G_eR_e^{-1}\alpha\beta^{-1}R_e\beta = G_e\alpha R_e^\beta.
\end{displaymath}
The bijective mapping $\pi_e$ from \eqref{Eq:Pie} is an isomorphism $\qhom{G,G_e,f}\to e^G$, since
\begin{displaymath}
    (G_e\alpha*G_e\beta)^{\pi_e} = e^{R_e^{-1}\alpha\beta^{-1}R_e\beta} = (e^{\alpha\beta^{-1}}\cdot e)^\beta = e^\alpha\cdot e^\beta = (G_e\alpha)^{\pi_e}\cdot (G_e\beta)^{\pi_e},
\end{displaymath}
where we have used $\beta\in\aut Q$.
\end{proof}

Consider a situation from Proposition \ref{Pr:HomogeneousRepresentation} in which $G$ acts transitively on $Q$. Then
\begin{displaymath}
    e^G = Q\simeq \qhom{G,G_e,f},
\end{displaymath}
and we will call the isomorphism a \emph{homogeneous representation} of $Q$. The most obvious choice $G=\aut Q$ results in the following characterization.

\begin{theorem}[{\cite[Theorem 7.1]{Joy}}]\label{Th:Homogeneous}
A quandle is homogeneous if and only if it is isomorphic to a quandle obtained by Construction \ref{Co:Homogeneous}.
\end{theorem}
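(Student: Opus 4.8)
The plan is to prove the two directions of Theorem~\ref{Th:Homogeneous} separately, both of which are essentially already available from the machinery built in this section. For the forward direction, suppose $Q$ is a homogeneous quandle. Fix any $e \in Q$ and take $G = \aut Q$, which acts transitively on $Q$ by hypothesis and is (trivially) a normal subgroup of $\aut Q$. Let $f$ be the conjugation by $R_e$ on $\aut Q$; since $R_e \in \rmlt Q \le \aut Q$, this is an inner automorphism of $\aut Q$, hence lies in $\aut{\aut Q}$. Now Proposition~\ref{Pr:HomogeneousRepresentation} applies verbatim: $\qhom{G, G_e, f}$ is well defined and isomorphic to the subquandle $e^G$. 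But $e^G = Q$ because $G = \aut Q$ acts transitively, so $Q \simeq \qhom{\aut Q, (\aut Q)_e, f}$, which is a quandle obtained by Construction~\ref{Co:Homogeneous}.

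For the converse direction, suppose $Q \simeq \qhom{G, H, f}$ for some group $G$, some $f \in \aut G$, and some $H \le C_G(f)$. By Lemma~\ref{Lm:Homogeneous}, $\qhom{G, H, f}$ is a homogeneous quandle, and homogeneity is obviously preserved under isomorphism, so $Q$ is homogeneous. This half is immediate and requires no further work.

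The only point needing a word of care is making sure the hypotheses of Proposition~\ref{Pr:HomogeneousRepresentation} are genuinely met in the forward direction --- namely that $G$ is normal in $\aut Q$ (here $G = \aut Q$, so this is automatic) and that $f$ is the restriction of conjugation by $R_e$ to $G$ (here the ``restriction'' is to all of $\aut Q$, so $f$ is literally conjugation by $R_e$, an honest automorphism of $\aut Q$). With $G$ transitive, Proposition~\ref{Pr:HomogeneousRepresentation} already delivers $Q = e^G \simeq \qhom{G, G_e, f}$, so there is essentially nothing left to check. I do not anticipate any real obstacle: the substantive content --- that Construction~\ref{Co:Homogeneous} always yields a homogeneous quandle, and that the homogeneous representation via $\pi_e$ is an isomorphism --- has been fully established in Lemma~\ref{Lm:Homogeneous} and Proposition~\ref{Pr:HomogeneousRepresentation}, and the theorem is just the specialization of that proposition to the transitive case $G = \aut Q$ together with the trivial converse.
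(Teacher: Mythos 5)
Your proposal is correct and follows exactly the same route as the paper: the converse is Lemma~\ref{Lm:Homogeneous}, and the forward direction is Proposition~\ref{Pr:HomogeneousRepresentation} applied with $G=\aut Q$ and $f$ the conjugation by $R_e$. The paper's proof is precisely this two-line argument, so there is nothing to add.
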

\begin{proof}
Lemma \ref{Lm:Homogeneous} establishes the converse implication. For the direct implication, suppose that $Q$ is homogeneous, take $G=\aut{Q}$, and apply Proposition \ref{Pr:HomogeneousRepresentation}.
\end{proof}

In view of Proposition \ref{Pr:DQ}(iv), connected quandles can be represented using $G=\rmlt Q$ or $G=\dis Q$. The two cases will be studied in detail in the next two sections, resulting in the \emph{canonical} and \emph{minimal} representations.

\section{Minimal representation for connected quandles}\label{Sc:Minimal}

Suppose that $Q$ is a connected quandle, $e\in Q$, and let $G=\rmlt Q'=\dis Q$. The homogeneous representation $Q\simeq \qhom{G,G_e,f}$ of Proposition \ref{Pr:HomogeneousRepresentation} will be called \emph{minimal}. The following result (essentially Galkin's \cite[Theorem 4.4]{Gal-ldq}) gives the reason for the terminology.

\begin{theorem}\label{Th:Minimal}
Let $Q$ be a connected quandle. If $Q\simeq\qhom{G,H,f}$ for some group $G$, $f\in\aut G$ and $H\le C_G(f)$, then $\rmlt Q'$ embeds into a quotient of $G$.
\end{theorem}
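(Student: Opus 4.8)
The plan is to extract, from an arbitrary homogeneous representation $Q \simeq \qhom{G,H,f}$, a concrete quotient of $G$ that carries a transitive quandle action reproducing $Q$, and then to identify $\rmlt{Q}'$ with a subgroup of that quotient. First I would set $P = \qhom{G,H,f}$ and recall from Lemma~\ref{Lm:Homogeneous} (and its proof) that the maps $\varphi_a \colon Hx \mapsto Hxa$ are automorphisms of $P$ acting transitively, so $G$ acts on the underlying set $G/H$ by right translation and this action is by quandle automorphisms. The kernel $N$ of this action is a normal subgroup of $G$ (it is the core of $H$ in $G$ that is moreover $f$-invariant, but I only need that it is normal), and $\bar G = G/N$ acts faithfully and transitively on $Q \simeq P$ as a group of automorphisms. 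Thus $\bar G \le \aut{Q}$ is a transitive subgroup.

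Next I would locate the right translations inside $\bar G$. Writing $e = H1$ (or the element of $Q$ corresponding to it), one computes $R_e$ on $P$ directly from the formula $Hx * Hy = H(xy^{-1})^f y$: we get $(Hx)^{R_e} = Hx * H1 = Hx^f$, which shows that $R_e$ is induced by the automorphism $f$ of $G$ acting on cosets, hence $R_e \in \bar G$ once we check $f$ normalizes everything appropriately — more simply, since $Q$ is connected, $\rmlt{Q} = \gen{R_a : a \in Q}$ and each $R_a = R_e^{\varphi_{a}^{-1}}\!\!$-conjugate lies in the normal closure of $R_e$. The cleanest route: for each $a \in Q$ there is $g \in G$ with $e^{g} = a$, and then $R_a = R_{e^g} = R_e^{\,g}$ by \eqref{Eq:RConj} (reading $g$ as an automorphism of $Q$); since $R_e = f \in \aut G$ acts on $G/H = Q$ as an element of $\bar G$ — here I must be slightly careful, as $f$ acts on $Q$ but need not itself be an element of $\bar G$. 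The honest fix is to pass to the bigger group $G^* = G \rtimes \gen{f}$ acting on $G/H$ (with $G$ acting by right translation and $f$ by its coset action), verify this action is by automorphisms of $P$ with $R_e$ the image of $f$, let $N^*$ be its kernel, and observe $G^*/N^* \le \aut Q$ contains $\rmlt Q$ with $R_e \in G^*/N^*$ the image of $f$.

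Then $\rmlt{Q}' \le (G^*/N^*)' \cap \rmlt Q$. By Proposition~\ref{Pr:DQConnected}, $\rmlt Q' = \dis Q = \gen{R_a^{-1}R_b}$, and each $R_a^{-1}R_b = R_e^{-g}R_e^{h}$ for suitable $g,h \in G$ (with $a = e^g$, $b = e^h$), so in $G^*/N^*$ this equals the image of $f^{-g}f^{h} = g^{-1}f^{-1}g\,h^{-1}fh$, which lies in the subgroup generated by the images of $G$ together with $[G,\gen f]$-type elements — in fact in $G N^*/N^*$, since $f^{-g}f^h = f^{-1}(f g^{-1} f^{-1}\!) g\, \cdots$ simplifies to an element of $\gen{f}$ times an element of $G$, and the $\gen f$-parts cancel. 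So $\dis Q$ maps into $GN^*/N^* \cong G/(G \cap N^*)$, a quotient of $G$, and $\rmlt Q' = \dis Q$ embeds there. I expect the main obstacle to be exactly this bookkeeping — making precise how $f$ acts on $Q$ versus how it sits as an element of an overgroup, and checking that the $\gen f$-component drops out so that displacements land in the image of $G$ itself rather than merely of $G^*$. Once that cancellation is verified the embedding statement is immediate.
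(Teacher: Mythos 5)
Your proposal is correct and is essentially the paper's argument: both rest on the homomorphism $a\mapsto\varphi_a$ from $G$ into $\aut Q$ given by right translation of cosets, Proposition~\ref{Pr:DQConnected}, and the identification of each displacement $R_{Hx}^{-1}R_{Hy}$ with $\varphi_{x^{-1}(xy^{-1})^f y}$. The paper gets this identification by a direct coset computation, so your detour through $G\rtimes\gen{f}$ is unnecessary scaffolding, though the cancellation you defer does go through: $f^{-g}f^{h}=g^{-1}(gh^{-1})^{f}h\in G$.
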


\begin{proof}
Assume for simplicity that $Q=\qhom{G,H,f}$. Define $\varphi:G\to\aut Q$ by $a\mapsto \varphi_a$, where $(Hx)^{\varphi_a}=Hxa$ as in the proof of Lemma \ref{Lm:Homogeneous}. The mapping $\varphi$ is obviously a homomorphism. We show that $\rmlt Q'$ is a subgroup of $\im\varphi$, and hence that $\rmlt Q'$ embeds into $G/\mathrm{Ker}(\varphi)$.

By Proposition \ref{Pr:DQConnected}, $\rmlt{Q}' = \dis{Q}$. It therefore suffices to check that $R_{Hx}^{-1}R_{Hy}\in\im\varphi$ for every $x$, $y\in G$. Recall that the unique solution to $Hx*Hy=Hz$ is $Hx=H(zy^{-1})^{f^{-1}}y=(Hz)^{R_{Hy}^{-1}}$. Hence for every $x$, $y$, $u\in G$ we have
\begin{displaymath}
    (Hu)^{R_{Hx}^{-1}R_{Hy}} = (H(ux^{-1})^{f^{-1}}x)^{R_{Hy}} = H((ux^{-1})^{f^{-1}}xy^{-1})^fy = Hux^{-1}(xy^{-1})^fy,
\end{displaymath}
proving $R_{Hx}^{-1}R_{Hy} = \varphi_{x^{-1}(xy^{-1})^fy}$.
\end{proof}

In particular, if $Q$ is a finite connected quandle, and if $G$ is of smallest order among all groups such that $Q\simeq \qhom{G,H,f}$, then $G\simeq\rmlt{Q}'$.

\section{Canonical correspondence for connected quandles}\label{Sc:Canonical}

Throughout this section, fix a set $Q$ and an element $e\in Q$. We proceed to establish a one-to-one correspondence between connected quandles defined on $Q$ and certain configurations in transitive groups on $Q$ that we will call quandle envelopes. To distinguish quandles defined on $Q$ from the underlying set $Q$, we will explicitly name the quandle operation on $Q$.

A \emph{quandle folder} is a pair $(G,\zeta)$ such that $G$ is a transitive group on $Q$ and $\zeta\in Z(G_e)$, the center of the stabilizer of $e$.
A~\emph{quandle envelope} is a quandle folder such that $\gen{\zeta^G} = G$, that is, the smallest normal subgroup of $G$ containing $\zeta$ is all of $G$.

For a connected quandle $(Q,\cdot)$, define
\begin{displaymath}
    \mathcal E(Q,\cdot) = (\rmlt{Q,\cdot},R_e).
\end{displaymath}

\begin{lemma}\label{Lm:E}
Let $(Q,\cdot)$ be a connected quandle and $e\in Q$. Then $\mathcal E(Q,\cdot)$ is a quandle envelope.
\end{lemma}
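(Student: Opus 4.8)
The claim is that $\mathcal E(Q,\cdot)=(\rmlt{Q,\cdot},R_e)$ is a quandle envelope, so I must verify three things: (a) $\rmlt{Q,\cdot}$ is a transitive group on $Q$; (b) $R_e\in Z((\rmlt{Q,\cdot})_e)$, the center of the point stabilizer; (c) the normal closure $\gen{R_e^{\rmlt{Q,\cdot}}}$ equals all of $\rmlt{Q,\cdot}$. Point (a) is immediate: connectedness of $Q$ means precisely that $\rmlt{Q,\cdot}$ acts transitively on $Q$, which is part of the hypothesis.

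For (b), write $G=\rmlt{Q,\cdot}$. Since $R_e$ fixes $e$ (by idempotence $e\cdot e=e$), we have $R_e\in G_e$, so it remains to show $R_e$ centralizes $G_e$. Let $\alpha\in G_e$, i.e.\ $\alpha\in\rmlt Q$ with $e^\alpha=e$. Then $\alpha\in\aut Q$, so by \eqref{Eq:RConj} we have $R_e^{\,\alpha}=R_{e^\alpha}=R_e$, which says exactly $\alpha^{-1}R_e\alpha=R_e$, i.e.\ $R_e$ commutes with $\alpha$. Hence $R_e\in Z(G_e)$. (This is the same computation that appeared in the proof of Proposition~\ref{Pr:HomogeneousRepresentation}, restricted now to $G=\rmlt Q$ rather than a general normal subgroup of $\aut Q$.)

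For (c), I need $\gen{R_e^G}=G$ where $G=\rmlt Q$. By \eqref{Eq:RConj}, for any $\alpha\in G$ the conjugate $R_e^{\,\alpha}=R_{e^\alpha}$, and since $G$ acts transitively on $Q$, as $\alpha$ ranges over $G$ the point $e^\alpha$ ranges over all of $Q$. Therefore the conjugacy class $R_e^G=\{R_a:a\in Q\}$, the full set of right translations. Its normal closure is a normal subgroup of $G$ containing all the generators $R_a$ of $G=\rmlt Q$, hence equals $G$.

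None of these steps presents a real obstacle; the lemma is essentially a repackaging of \eqref{Eq:RConj} together with the definition of connectedness. The only point requiring a moment's care is making sure each $\alpha\in G_e$ genuinely lies in $\aut Q$ (so that \eqref{Eq:RConj} applies) — but this holds because $Q$ is a quandle, so $\rmlt Q\le\aut Q$. I would present the argument in the order (a), (b), (c) above, each occupying a sentence or two.
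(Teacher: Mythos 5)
Your proof is correct and follows essentially the same route as the paper: the paper verifies $R_e\in Z(G_e)$ by the direct computation $x^{\alpha R_e}=x^\alpha\cdot e=x^\alpha\cdot e^\alpha=(xe)^\alpha=x^{R_e\alpha}$, which is exactly the content of your appeal to \eqref{Eq:RConj}, and establishes $\gen{R_e^G}=G$ via $R_e^{\,\widehat x}=R_{e^{\widehat x}}=R_x$ just as you do. No gaps.
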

\begin{proof}
Let $(Q,\cdot)$ and $G=\rmlt{Q,\cdot}$. Note that $R_e\in G_e$. For any $\alpha\in G_e\le\aut{Q,\cdot}$, we calculate $x^{\alpha R_e} = x^\alpha\cdot e = x^\alpha\cdot e^\alpha = (xe)^\alpha = x^{R_e\alpha}$, so $R_e\in Z(G_e)$. Since the quandle $(Q,\cdot)$ is connected, $G$ acts transitively on the set $Q$, and for every $x\in Q$ there is $\widehat x\in G$ such that $e^{\widehat x}=x$. Then $R_e^{\,\widehat x} = R_{e^{\widehat x}} = R_x$, proving that $\gen{ R_e^{\,G} } = G$.
\end{proof}

For a quandle folder $(G,\zeta)$, define
\begin{displaymath}
\mathcal Q(G,\zeta) = (Q,\circ),\quad x\circ y = x^{\zeta^{\widehat y}},
\end{displaymath}
where $\widehat y$ is any element of $G$ satisfying $e^{\widehat y}=y$. We shall see that the operation does not depend on the choice of the permutations $\widehat y$, and that $\mathcal Q(G,\zeta)$ is a homogeneous quandle.

\begin{lemma}\label{Lm:Q}
Let $(G,\zeta)$ be a quandle folder on a set $Q$ with a fixed element $e\in Q$. Then:
\begin{enumerate}
\item[(i)] If $\alpha$, $\beta\in G$ satisfy $e^\alpha=e^\beta$ then $\zeta^\alpha = \zeta^\beta$.
\item[(ii)] The definition of $\mathcal Q(G,\zeta)$ does not depend on the choice of the permutations $\widehat y$.
\item[(iii)] The mapping $\pi_e$ of \eqref{Eq:Pie} is an isomorphism of $\qhom{G,G_e,\conj{\zeta}}$ onto $\mathcal Q(G,\zeta)$.
\item[(iv)] $\mathcal Q(G,\zeta)$ is a homogeneous quandle.
\item[(v)] $\rmlt{\mathcal Q(G,\zeta)} = \gen{ \zeta^{\widehat y}\,:\,y\in Q} = \gen{\zeta^G}$.
\item[(vi)] If $(G,\zeta)$ is a quandle envelope, then $\mathcal Q(G,\zeta)$ is a connected quandle.
\end{enumerate}
\end{lemma}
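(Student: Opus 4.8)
The plan is to prove the six parts of Lemma~\ref{Lm:Q} in the order listed, since each one feeds into the next. For part~(i), suppose $e^\alpha = e^\beta$; then $\alpha\beta^{-1}\in G_e$, and since $\zeta\in Z(G_e)$ it commutes with $\alpha\beta^{-1}$. A short computation with conjugation then gives $\zeta^\alpha = \zeta^\beta$: indeed $\zeta^{\alpha\beta^{-1}} = \zeta$ means $\beta^{-1}\alpha^{-1}\cdot\ldots$, so $\alpha^{-1}\zeta\alpha$ and $\beta^{-1}\zeta\beta$ agree after conjugating by the same element. Part~(ii) is then immediate: if $e^{\widehat y} = e^{\widehat y'} = y$, then by~(i) we get $\zeta^{\widehat y} = \zeta^{\widehat y'}$, so $x\circ y = x^{\zeta^{\widehat y}}$ is unambiguous.

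For part~(iii) I would observe that $(G,\conj\zeta)$ fits Construction~\ref{Co:Homogeneous}: we need $G_e \le C_G(\conj\zeta)$, which holds because $\zeta\in Z(G_e)$ means every element of $G_e$ commutes with $\zeta$, hence is fixed by $\conj\zeta$. So $\qhom{G,G_e,\conj\zeta}$ is a well-defined homogeneous quandle by Lemma~\ref{Lm:Homogeneous}. Now $\pi_e$ of~\eqref{Eq:Pie} is already known to be a well-defined bijection $G_e\alpha\mapsto e^\alpha$; it remains to check it is a homomorphism. Writing the multiplication in $\qhom{G,G_e,\conj\zeta}$ as $G_e\alpha * G_e\beta = G_e(\alpha\beta^{-1})^{\conj\zeta}\beta = G_e\zeta^{-1}\alpha\beta^{-1}\zeta\beta$, applying $\pi_e$ gives $e^{\zeta^{-1}\alpha\beta^{-1}\zeta\beta}$. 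Using that $e^{\zeta^{-1}} = e$ (as $\zeta\in G_e$) and a bit of rearrangement, one identifies this with $(e^\alpha)\circ(e^\beta) = (e^\alpha)^{\zeta^{\widehat{e^\beta}}}$, taking $\widehat{e^\beta} = \beta$; the key point is that $\zeta^\beta = \beta^{-1}\zeta\beta$ and the definition of $\circ$ match up. Then part~(iv) follows at once from~(iii) together with Lemma~\ref{Lm:Homogeneous}, which says $\qhom{G,G_e,\conj\zeta}$ is a homogeneous quandle.

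For part~(v) I would compute $\rmlt{\mathcal Q(G,\zeta)}$ directly from the operation. The right translation $R_y$ sends $x\mapsto x\circ y = x^{\zeta^{\widehat y}}$, so $R_y = \zeta^{\widehat y}$ as a permutation of $Q$. Hence $\rmlt{\mathcal Q(G,\zeta)} = \gen{\zeta^{\widehat y} : y\in Q}$. Since $G$ is transitive, as $y$ ranges over $Q$ the element $\widehat y$ can be taken to range over a transversal of $G_e$ in $G$, and then $\zeta^{\widehat y}$ ranges over all $G_e$-classes of $G$-conjugates of $\zeta$; by part~(i) the value $\zeta^{\widehat y}$ depends only on the coset, so $\gen{\zeta^{\widehat y}:y\in Q} = \gen{\zeta^g : g\in G} = \gen{\zeta^G}$. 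Finally, part~(vi): if $(G,\zeta)$ is a quandle envelope then by definition $\gen{\zeta^G} = G$, so by~(v) we get $\rmlt{\mathcal Q(G,\zeta)} = G$, which acts transitively on $Q$; combined with~(iv) this says $\mathcal Q(G,\zeta)$ is a connected quandle.

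The main obstacle is part~(iii): getting the bookkeeping of conjugations exactly right so that the multiplication formula of $\qhom{G,G_e,\conj\zeta}$ transported along $\pi_e$ literally equals $x\circ y = x^{\zeta^{\widehat y}}$. The subtlety is that in $\qhom{G,G_e,\conj\zeta}$ the second argument $G_e\beta$ is multiplied on the right, whereas in $\mathcal Q(G,\zeta)$ the second argument enters via conjugation of $\zeta$; reconciling these requires carefully using $e^{\zeta}=e$ and $e^{\zeta^{-1}}=e$ at the right moments, and noting that the choice $\widehat{e^\beta}=\beta$ is legitimate by part~(ii). Everything else is routine once~(i) is established.
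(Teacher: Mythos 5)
Your proposal is correct and follows essentially the same route as the paper: part (i) via $\alpha\beta^{-1}\in G_e$ and $\zeta\in Z(G_e)$, part (iii) by checking $G_e\le C_G(\conj\zeta)$ and transporting the coset multiplication $G_e\alpha*G_e\beta=G_e\alpha\zeta^\beta$ through $\pi_e$ using $e^{\zeta^{-1}}=e$ and $\zeta^\beta=\zeta^{\widehat{e^\beta}}$, and parts (iv)--(vi) exactly as in the paper. The computations you leave as ``a bit of rearrangement'' are precisely the ones the paper writes out, and they go through.
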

\begin{proof}
For $\alpha$, $\beta\in G$, note that $\zeta^\alpha = \zeta^\beta$ iff $\beta^{-1}\alpha$ commutes with $\zeta$. The latter condition certainly holds when $e^\alpha=e^\beta$ because $\zeta\in Z(G_e)$. This proves (i), and part (ii) follows.

Consider again the bijection $\pi_e$ of \eqref{Eq:Pie}. Since $G$ is transitive, $\pi_e$ is onto $Q$. To check that $\pi_e$ is a homomorphism, note that $\zeta^\beta = \zeta^{\widehat {e^\beta}}$ by (i). Therefore, with $\qhom{G,G_e,\conj{\zeta}} = (G/G_e,*)$, we have
$G_e\alpha*G_e\beta = G_e(\alpha\beta^{-1})^\zeta\beta = G_e\zeta^{-1}\alpha\zeta^{\beta} = G_e\alpha\zeta^\beta$, and thus
\begin{displaymath}
    (G_e\alpha*G_e\beta)^{\pi_e} = (G_e\alpha\zeta^\beta)^{\pi_e} = e^{\alpha\zeta^\beta} = (e^\alpha)^{\zeta^{\widehat{e^\beta}}} = e^\alpha\circ e^\beta = (G_e\alpha)^{\pi_e} \circ (G_e\beta)^{\pi_e}.
\end{displaymath}
This proves (iii), and part (iv) follows from Lemma \ref{Lm:Homogeneous}.

For (v), note that the right translation by $y$ in $(Q,\circ)$ is the mapping $\zeta^{\widehat y}$ and, once again, $\zeta^\beta = \zeta^{\widehat{e^\beta}}$ for any $\beta\in G$. Part (vi) follows.
\end{proof}

\begin{theorem}[Canonical correspondence]\label{Th:Canonical}
Let $Q$ be a set with a fixed element $e\in Q$. Then the mappings
\begin{align*}
    &\mathcal E: (Q,\cdot)\mapsto (\rmlt{Q,\cdot},R_e),\\
    &\mathcal Q: (G,\zeta)\mapsto (Q,\circ),\quad x\circ y = x^{\zeta^{\widehat y}}
\end{align*}
are mutually inverse bijections between the set of connected quandles and the set of quandle envelopes on $Q$.
\end{theorem}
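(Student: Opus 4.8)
The plan is to show that $\mathcal E$ and $\mathcal Q$ are well-defined maps between the two sets, and then that they compose to the identity in both directions. Lemma~\ref{Lm:E} already gives that $\mathcal E(Q,\cdot)$ is a quandle envelope, and Lemma~\ref{Lm:Q}(vi) gives that $\mathcal Q(G,\zeta)$ is a connected quandle, so both maps land in the right sets. Thus only the two round-trip identities remain: (a) $\mathcal Q(\mathcal E(Q,\cdot)) = (Q,\cdot)$ for every connected quandle $(Q,\cdot)$, and (b) $\mathcal E(\mathcal Q(G,\zeta)) = (G,\zeta)$ for every quandle envelope $(G,\zeta)$.

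For (a), write $G = \rmlt{Q,\cdot}$ and $\zeta = R_e$, so $\mathcal E(Q,\cdot) = (G,\zeta)$. Applying $\mathcal Q$ we get $(Q,\circ)$ with $x\circ y = x^{\zeta^{\widehat y}}$ where $\widehat y\in G$ satisfies $e^{\widehat y} = y$. By \eqref{Eq:RConj}, $\zeta^{\widehat y} = R_e^{\,\widehat y} = R_{e^{\widehat y}} = R_y$, so $x\circ y = x^{R_y} = x\cdot y$. Hence $\circ = \cdot$ on the nose, and this direction is essentially immediate.

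For (b), let $(G,\zeta)$ be a quandle envelope and put $(Q,\circ) = \mathcal Q(G,\zeta)$. We must show $\rmlt{Q,\circ} = G$ \emph{as permutation groups on $Q$}, and that under this identification the right translation $R_e$ of $(Q,\circ)$ equals $\zeta$. By Lemma~\ref{Lm:Q}(v), $\rmlt{Q,\circ} = \gen{\zeta^G}$, which equals $G$ precisely because $(G,\zeta)$ is a quandle \emph{envelope} (the defining condition $\gen{\zeta^G} = G$). So $\rmlt{Q,\circ} = G$ as subsets of $\mathrm{Sym}(Q)$. Finally, the right translation by $e$ in $(Q,\circ)$ is, by definition of $\circ$, the map $x\mapsto x^{\zeta^{\widehat e}}$; choosing $\widehat e = 1$ (valid since $e^1 = e$), this is just $x\mapsto x^\zeta$, i.e. the permutation $\zeta$ itself. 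Therefore $\mathcal E(Q,\circ) = (\rmlt{Q,\circ}, R_e) = (G,\zeta)$.

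The only genuinely delicate point is making sure all the identifications are of \emph{permutation groups}, not merely abstract groups: the statement asserts a bijection between connected quandles on the fixed set $Q$ and quandle envelopes on $Q$, so equality like $\rmlt{Q,\circ} = G$ must be read inside $\mathrm{Sym}(Q)$, and likewise $R_e = \zeta$ as concrete permutations. With the well-definedness and the connectedness/envelope properties already supplied by Lemmas~\ref{Lm:E} and~\ref{Lm:Q}, the two round trips reduce to the one-line computations $R_e^{\,\widehat y} = R_y$ (via \eqref{Eq:RConj}) and the choice $\widehat e = 1$, so no real obstacle remains beyond bookkeeping.
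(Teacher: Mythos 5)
Your proposal is correct and follows essentially the same route as the paper: both reduce the theorem to the two round-trip identities via Lemmas~\ref{Lm:E} and~\ref{Lm:Q}, compute $R_e^{\,\widehat y}=R_y$ using \eqref{Eq:RConj} for one direction, and use Lemma~\ref{Lm:Q}(v) plus the envelope condition $\gen{\zeta^G}=G$ for the other. The only cosmetic difference is that you verify $R_e=\zeta$ by choosing $\widehat e=1$ (justified by the independence of choice in Lemma~\ref{Lm:Q}(ii)), whereas the paper argues directly from $\widehat e\in G_e$ and $\zeta\in Z(G_e)$; both are equally valid.
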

\begin{proof}
In view of Lemmas \ref{Lm:E} and \ref{Lm:Q}, it remains to show that the two mappings are mutually inverse. Let $(G,\zeta)$ be a quandle envelope, and let $(Q,\circ)=\mathcal Q(G,\zeta)$ be the corresponding connected quandle. Then $\rmlt{Q,\circ} = \gen{\zeta^G} = G$ by Lemma \ref{Lm:Q}. Moreover, $x^{R_e}=x\circ e = x^{\zeta^{\widehat e}} = x^\zeta$ thanks to $\widehat e\in G_e$ and $\zeta\in Z(G_e)$. Hence $\zeta$ is the right translation by $e$ in $(Q,\circ)$. It follows that $\mathcal E(\mathcal Q(G,\zeta)) = (G,\zeta)$.

Conversely, let $(Q,\cdot)$ be a connected quandle and let $\mathcal E(Q,\cdot)=(\rmlt{Q,\cdot},R_e)$ be the corresponding quandle envelope. Then, in $\mathcal Q(\mathcal E(Q,\cdot))$, we calculate $x\circ y = x^{R_e^{\,\widehat y}} = x^{R_y} = x\cdot y$. It follows that $\mathcal Q(\mathcal E(Q,\cdot)) = (Q,\cdot)$.
\end{proof}

\begin{example}
Let $K$ be a knot, $G_K$ its knot group, and $Q_K$ its knot quandle. Then $G_K$ acts transitively on the underlying set of $Q_K$, and the stabilizer of a fixed element $e\in Q$ is the peripheral subgroup $H_K$. Since $H_K\simeq\Z\times\Z$, the meridian $m$ is central in the stabilizer, and it follows from Wirtinger's presentation of $G_K$ that $G_K=\gen{ m^{G_K}}$. We proved that $(G_K,m)$ is a quandle envelope. The knot quandle $Q_K$ is isomorphic to $\mathcal Q(G_K,m)$. See \cite[Section 16]{Joy} or \cite[Section 6]{Mat} for details.
\end{example}

We conclude this section by solving the isomorphism problem and describing the automorphism group of connected quandles under the canonical correspondence.
We start with a useful characterization of isomorphisms.

\begin{lemma}\label{Lm:CanIso}
Let $(G,\zeta)$, $(K,\xi)$ be quandle envelopes on a set $Q$ with a fixed element $e\in Q$, and let
\begin{itemize}
\item $A$ be the set of all quandle isomorphisms $\varphi:\mathcal Q(G,\zeta)\to\mathcal Q(K,\xi)$ such that $e^\varphi=e$;
\item $B$ be the set of all permutations $\varphi$ of $Q$ such that $e^\varphi=e$, $\zeta^\varphi = \xi$ and $G^\varphi = K$;
\item $C$ be the set of all group isomorphisms $\psi:G\to K$ such that $\zeta^\psi = \xi$ and $G_e^{\,\psi} = K_e$.
\end{itemize}
Then $A=B$ and $\varphi\mapsto \conj{\varphi}$ is a bijection from $A=B$ to $C$.
\end{lemma}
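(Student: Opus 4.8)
The plan is to prove the three-way correspondence by first establishing $A=B$, then constructing the bijection $A\to C$ via $\varphi\mapsto\phi_\varphi$, where $\phi_\varphi$ denotes the map induced on permutations by conjugation. Throughout I would write $(Q,\circ_1)=\mathcal Q(G,\zeta)$ and $(Q,\circ_2)=\mathcal Q(K,\xi)$, and recall from Theorem~\ref{Th:Canonical} and Lemma~\ref{Lm:Q}(v) that $\rmlt{Q,\circ_1}=\gen{\zeta^G}=G$ and similarly $\rmlt{Q,\circ_2}=K$, with $R_e$ in $(Q,\circ_1)$ equal to $\zeta$ (and $\xi$ in $(Q,\circ_2)$).

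For $A\subseteq B$: given $\varphi\in A$, for any $\alpha\in\aut{Q,\circ_1}$ we have $\varphi^{-1}\alpha\varphi\in\aut{Q,\circ_2}$; since right translations are characterized algebraically (they generate $\rmlt{}$, which by \eqref{Eq:RConj} is stable under conjugation by automorphisms), conjugation by $\varphi$ maps $\rmlt{Q,\circ_1}$ onto $\rmlt{Q,\circ_2}$, i.e.\ $G^\varphi=K$. Moreover $\varphi$ conjugates $R_x$ in $(Q,\circ_1)$ to $R_{x^\varphi}$ in $(Q,\circ_2)$ by \eqref{Eq:RConj}; taking $x=e$ and using $e^\varphi=e$ gives $\zeta^\varphi=\xi$. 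For $B\subseteq A$: given $\varphi\in B$, I would show $\varphi$ is a quandle homomorphism by computing $(x\circ_1 y)^\varphi = (x^{\zeta^{\widehat y}})^\varphi = (x^\varphi)^{(\zeta^{\widehat y})^\varphi} = (x^\varphi)^{\zeta^\varphi\,^{\widehat y\,^\varphi}}$; here $\widehat y^{\,\varphi}\in K$ since $G^\varphi=K$, and it sends $e=e^\varphi$ to $y^\varphi$, so it is a valid ``$\widehat{y^\varphi}$'' for $(Q,\circ_2)$, and $\zeta^\varphi=\xi$, giving $(x^\varphi)^{\xi^{\widehat{y^\varphi}}}=x^\varphi\circ_2 y^\varphi$. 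Since $\varphi$ is a bijection fixing $e$, it lies in $A$.

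For the bijection $A\to C$: for $\varphi\in A=B$ set $\psi=\phi_\varphi\colon g\mapsto g^\varphi$ restricted to $G$. Since $G^\varphi=K$, this is a group isomorphism $G\to K$; $\zeta^\psi=\zeta^\varphi=\xi$; and because $\varphi$ fixes $e$, conjugation by $\varphi$ maps $G_e$ onto $K_e$ (as $x^g=x\iff (x^\varphi)^{g^\varphi}=x^\varphi$ and $e^\varphi=e$), so $G_e^{\,\psi}=K_e$; hence $\psi\in C$. The map is injective because $G$ is transitive on $Q$: if $\phi_{\varphi_1}=\phi_{\varphi_2}$ on $G$ then $\varphi_1\varphi_2^{-1}$ centralizes the transitive group $G$, hence is the identity (a permutation commuting with a transitive group and fixing — well, one must be slightly careful: a permutation commuting with every element of a transitive group $G$ on $Q$ is determined by where it sends one point; since $\varphi_1\varphi_2^{-1}$ fixes $e$, it commutes with $G$ and fixes $e$, so it fixes every point $e^g=e^{\varphi_1\varphi_2^{-1}g}=(e^g)^{\varphi_1\varphi_2^{-1}}$, thus $\varphi_1=\varphi_2$). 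For surjectivity, given $\psi\in C$, define $\varphi\colon Q\to Q$ by $e^g\mapsto e^{g^\psi}$; this is well defined and bijective because $g\mapsto e^g$ identifies $Q$ with $G_e\backslash G$ and $\psi$ respects $G_e$, it fixes $e$ (take $g=1$), and one checks $\zeta^\varphi=\xi$ and $g^\varphi = g^\psi$ for $g\in G$, so $\varphi\in B=A$ and $\phi_\varphi=\psi$.

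The main obstacle is the well-definedness and ``naturality'' bookkeeping in surjectivity: verifying that $\varphi\colon e^g\mapsto e^{g^\psi}$ genuinely satisfies $h^\varphi=h^\psi$ as permutations for all $h\in G$ (not merely that $\varphi$ conjugates $G$ into $K$), which is what is needed to conclude $\zeta^\varphi=\zeta^\psi=\xi$ and $G^\varphi=K$ and hence $\varphi\in B$. This amounts to the identity $(e^g)^{h^\varphi} = (e^{gh})^\varphi = e^{(gh)^\psi} = e^{g^\psi h^\psi} = (e^{g^\psi})^{h^\psi} = (e^g)^{\varphi\cdot h^\psi}$ for all $g$, using that $\psi$ is a homomorphism and that $e^{(\cdot)}$ parametrizes $Q$; once this is in hand everything else is routine. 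I would also remark that the apparent asymmetry in the definitions of $A$ (isomorphisms of quandles) and $B$ (permutations satisfying three conditions) is exactly the content of $A=B$, so this equality deserves to be stated first and used freely afterward.
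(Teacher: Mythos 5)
Your proposal is correct and follows essentially the same route as the paper: the forward map is conjugation $\varphi\mapsto\conj{\varphi}$, the inverse is the same construction $e^g\mapsto e^{g^{\psi}}$, and the key verification is the same identity $e^{(gh)^\psi}=e^{g^\psi h^\psi}$. The only differences are organizational — you prove $B\subseteq A$ directly and establish injectivity via the fact that a permutation fixing $e$ and centralizing the transitive group $G$ is the identity, whereas the paper obtains both from the two round-trip computations $fg=\mathrm{id}$ and $gf=\mathrm{id}$ — and these variants are equally valid.
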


\begin{proof}
Let $f$ denote the mapping $\varphi\mapsto \conj{\varphi}$ defined on $B$. We show that $A\subseteq B$, that $f$ maps $B$ into $C$, and we construct a mapping $g:C\to A\subseteq  B$ such that $fg$ is the identity mapping on $B$ and $gf$ is the identity mapping on $C$. This will prove the result.

Let $\mathcal Q(G,\zeta) = (Q,\circ)$, where $x\circ y = x^{\zeta^{\widehat y}}$ for some $\widehat{y}\in G$ satisfying $e^{\widehat y}=y$, and $\mathcal Q(K,\xi) = (Q,*)$, where $x*y = x^{\xi^{\overline y}}$ for some $\overline{y}\in K$ such that $e^{\overline y}=y$. For a permutation $\varphi$ of $Q$, the following universally quantified identities are equivalent:
\begin{displaymath}
    (x\circ y)^\varphi = (x^\varphi)*(y^\varphi),\qquad
    (x^{\zeta^{\widehat y}})^\varphi = (x^\varphi)^{\xi^{\overline{y^\varphi}}},\qquad
    x^{\varphi^{-1}\zeta^{\widehat y}\varphi} = x^{\xi^{\overline{y^\varphi}}}.
\end{displaymath}
Hence $\varphi$ is an isomorphism $(Q,\circ)\to (Q,*)$ if and only if
\begin{displaymath}
    (\zeta^{\widehat y})^\varphi = \xi^{\overline{y^\varphi}}.
\end{displaymath}
We will use this fact freely, as well as Lemma \ref{Lm:Q}.

$(A\subseteq B)$: We need to show $\zeta^\varphi=\xi$ and $G^\varphi = K$. Since $e^\varphi=e$, we have $\zeta^\varphi = (\zeta^{\widehat e})^\varphi = \xi^{\overline{e^\varphi}} = \xi^{\overline e} = \xi$. To prove $G^\varphi\subseteq K$, note that $G=\gen{ \zeta^G}$, pick $\alpha\in G$, and calculate $(\zeta^{\alpha})^\varphi = (\zeta^{\widehat {e^\alpha}})^\varphi = \xi^{\overline{e^{\alpha\varphi}}}\in K$. For the other inclusion $K\subseteq G^\varphi$, note that $K = \gen{ \xi^{K}}$, pick $\beta\in K$, find $\alpha\in G$ such that $e^\beta = e^{\alpha\varphi}$ by transitivity of $G$, and calculate $\xi^{\beta} = \xi^{\overline{e^\beta}} = \xi^{\overline{e^{\alpha\varphi}}} = (\zeta^{\widehat{e^\alpha}})^\varphi\in G^\varphi$.

$(f:B\to C)$: For $\varphi\in B$ let $\psi=\varphi^f = \phi_\varphi$ be the conjugation by $\varphi$. Since $G^\varphi = K$, we see that $\psi$ is an isomorphism $G\to K$. Clearly $\zeta^\psi = \zeta^\varphi = \xi$. To verify $G_e^{\,\psi} = K_e$, let $\alpha\in G_e$ and calculate $e^{\alpha^\psi} = e^{\alpha^\varphi} = e^{\varphi^{-1}\alpha\varphi} = e$, so $\alpha^\psi\in K_e$.

$(g:C\to A)$: For $\psi\in C$, define $\varphi = \psi^g$ by
\begin{displaymath}
    x^\varphi = e^{\widehat x^{\,\psi}}
\end{displaymath}
for every $x\in Q$. We show that $\varphi$ is an isomorphism $(Q,\circ)\to(Q,*)$ that fixes~$e$. The second condition follows immediately from $e^\varphi = e^{\widehat e^{\,\psi}} = e$, because $\widehat e\in G_e$ and $G_e^{\,\psi} = K_e$.
Let us observe two facts. First, if $\alpha$, $\beta\in G$, then
\begin{displaymath}
    e^{\alpha^\psi} = e^{\beta^\psi}\ \Leftrightarrow\  e^{\beta^\psi(\alpha^\psi)^{-1}}=e\ \Leftrightarrow\  (\beta\alpha^{-1})^\psi\in K_e\ \Leftrightarrow\  \beta\alpha^{-1}\in G_e\ \Leftrightarrow\  e^\alpha = e^\beta,
\end{displaymath}
hence $\varphi$ is a bijection. Second, for any $x\in Q$ and $\alpha\in G$ we have $e^{\widehat{x^\alpha}} = x^\alpha = e^{\widehat x\alpha}$. Combining the two observations, we see that
\begin{equation}\label{Eq:AuxPsi}
    e^{\widehat{x^\alpha}^\psi} = e^{(\widehat x\alpha)^\psi}.
\end{equation}
For $x$, $y\in Q$, we then have
\begin{align*}
    (x\circ y)^\varphi
    &= e^{\widehat{x\circ y}^{\,\psi}} = e^{\widehat{x\zeta^{\widehat y}}^\psi}
    = e^{(\widehat x \zeta^{\widehat y})^\psi}
    = e^{\widehat x{\,^\psi}(\zeta^{\widehat y})^\psi} \\
    & =(x^\varphi)^{(\zeta^{\widehat y})^\psi}
    = (x^\varphi)^{(\zeta^\psi)^{(\widehat y^{\,\psi})}} = (x^\varphi)^{\xi^{(\widehat y^{\,\psi})}} = (x^\varphi)^{\xi^{\overline{y^\varphi}}}
    = x^\varphi * y^\varphi,
\end{align*}
where in the penultimate step we have used $e^{\widehat y^{\,\psi}} = y^\varphi$.

($fg=\mathrm{id}$): For $\varphi\in B$ and $x\in Q$ we have
\begin{displaymath}
    x^{\varphi^{fg}}=x^{(\varphi^f)^g} = e^{\widehat x^{\,(\varphi^f)}} = e^{\widehat x^{\,\varphi}} = e^{\varphi^{-1}\widehat x\varphi} = e^{\widehat x\varphi} = x^\varphi.
\end{displaymath}

($gf=\mathrm{id}$): For $\psi\in C$ and $\alpha\in G$, we would like to show that $\alpha^{\psi^{gf}} = \alpha^{(\psi^g)^f} = \alpha^{\psi^g}$ is equal to $\alpha^\psi$. Let $x\in Q$, set $u=x^{(\psi^g)^{-1}}$ for brevity, and keeping \eqref{Eq:AuxPsi} in mind, calculate
\begin{displaymath}
    x^{\alpha^{\psi^g}} = x^{(\psi^g)^{-1}\alpha\psi^g} = (u^\alpha)^{\psi^g} = e^{\widehat{u^\alpha}^\psi}
        = e^{(\widehat u\alpha)^\psi} = e^{\widehat u^{\,\psi}\alpha^\psi} = (u^{\psi^g})^{\alpha^\psi} = x^{\alpha^\psi}.
\end{displaymath}
\end{proof}

A solution to the isomorphism problem now easily follows.

\begin{theorem}\label{Th:CanIso}
Let $(G,\zeta)$, $(K, \xi)$ be quandle envelopes on a set $Q$ with a fixed element $e\in Q$. Then the following conditions are equivalent:
\begin{enumerate}
\item[(i)] $\mathcal Q(G,\zeta)\simeq \mathcal Q(K,\xi)$.
\item[(ii)] There is a permutation $\varphi$ of $Q$ such that $e^\varphi=e$, $\zeta^\varphi = \xi$ and $G^\varphi = K$.
\item[(iii)] There is an isomorphism $\psi:G\to K$ such that $\zeta^\psi = \xi$ and $G_e^{\,\psi} = K_e$.
\end{enumerate}
\end{theorem}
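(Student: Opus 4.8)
The plan is to deduce Theorem \ref{Th:CanIso} directly from Lemma \ref{Lm:CanIso}, since the lemma already carries essentially all the content; the only gap to bridge is that the lemma speaks of \emph{base-point-preserving} isomorphisms, whereas condition (i) of the theorem asks merely for \emph{some} isomorphism $\mathcal Q(G,\zeta)\to\mathcal Q(K,\xi)$.

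First I would establish the two easy implications. The equivalence of (ii) and (iii) is immediate from Lemma \ref{Lm:CanIso}: the lemma asserts that $\varphi\mapsto\phi_\varphi$ is a bijection from the set $B$ (which is exactly the set of permutations described in (ii)) onto the set $C$ (which is exactly the set of isomorphisms described in (iii)), so one set is nonempty if and only if the other is. Likewise, (ii)$\Rightarrow$(i) is immediate, because the lemma gives $A=B$, and $A$ is by definition a set of quandle isomorphisms $\mathcal Q(G,\zeta)\to\mathcal Q(K,\xi)$; so any $\varphi$ as in (ii) is in particular such an isomorphism.

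The one step requiring an argument is (i)$\Rightarrow$(ii): given an arbitrary isomorphism $\varphi\colon\mathcal Q(G,\zeta)\to\mathcal Q(K,\xi)$, produce one that also fixes $e$. Here I would use homogeneity of $\mathcal Q(K,\xi)$ (Lemma \ref{Lm:Q}(iv)): there is an automorphism $\theta$ of $\mathcal Q(K,\xi)$ sending $e^\varphi$ back to $e$. In fact, by Lemma \ref{Lm:Homogeneous} the map $Hx\mapsto Hxa$ is such an automorphism for a suitable $a\in K$; transporting this through the isomorphism $\pi_e$ of Lemma \ref{Lm:Q}(iii), one gets an explicit $\theta\in\aut{\mathcal Q(K,\xi)}$ with $(e^\varphi)^\theta=e$. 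Then $\varphi\theta$ is an isomorphism $\mathcal Q(G,\zeta)\to\mathcal Q(K,\xi)$ fixing $e$, i.e.\ an element of the set $A=B$ of Lemma \ref{Lm:CanIso}, and hence $\varphi\theta$ witnesses (ii). I expect this to be the only place where any real verification is needed, and even there the work is light — essentially just recalling that $\aut{\mathcal Q(K,\xi)}$ is transitive and composing.

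The main (minor) obstacle is bookkeeping: one must make sure the automorphism $\theta$ used to recenter really is an automorphism of the \emph{target} quandle $\mathcal Q(K,\xi)$ and that post-composition preserves being an isomorphism into $\mathcal Q(K,\xi)$ — both routine once the transitivity of $\aut{\mathcal Q(K,\xi)}$ is invoked. With (i)$\Rightarrow$(ii)$\Leftrightarrow$(iii) and (ii)$\Rightarrow$(i) in hand, the cycle of implications is complete and the theorem follows.

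\begin{proof}
By Lemma \ref{Lm:CanIso}, the assignment $\varphi\mapsto\phi_\varphi$ is a bijection between the set $B$ of condition (ii) and the set $C$ of condition (iii). Hence (ii) holds if and only if (iii) holds.

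Next, (ii)$\Rightarrow$(i): by Lemma \ref{Lm:CanIso} we have $A=B$, and every element of $A$ is by definition an isomorphism $\mathcal Q(G,\zeta)\to\mathcal Q(K,\xi)$.

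Finally, (i)$\Rightarrow$(ii): suppose $\varphi\colon\mathcal Q(G,\zeta)\to\mathcal Q(K,\xi)$ is an isomorphism, and set $y=e^\varphi$. By Lemma \ref{Lm:Q}(iv), $\mathcal Q(K,\xi)$ is homogeneous, so there is $\theta\in\aut{\mathcal Q(K,\xi)}$ with $y^\theta=e$. Then $\varphi\theta$ is an isomorphism $\mathcal Q(G,\zeta)\to\mathcal Q(K,\xi)$ satisfying $e^{\varphi\theta}=y^\theta=e$, so $\varphi\theta\in A=B$, and $\varphi\theta$ is a permutation of $Q$ witnessing (ii).
\end{proof}
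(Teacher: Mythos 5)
Your proof is correct and follows essentially the same route as the paper: recenter an arbitrary isomorphism by post-composing with an automorphism of $\mathcal Q(K,\xi)$ carrying $e^\varphi$ back to $e$, then invoke Lemma \ref{Lm:CanIso}. The only cosmetic difference is that you appeal to homogeneity via Lemma \ref{Lm:Q}(iv), while the paper takes the recentering automorphism directly from $K=\rmlt{\mathcal Q(K,\xi)}$ using Theorem \ref{Th:Canonical}.
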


\begin{proof}
Let $\rho:\mathcal Q(G,\zeta)\to\mathcal Q(K,\xi)$ be an isomorphism, and let $\alpha\in K$ be such that $e^{\rho\alpha}=e$. Since $\alpha\in K = \rmlt{\mathcal Q(K,\xi)}\le \aut{\mathcal Q(K,\xi)}$ by Theorem \ref{Th:Canonical}, the permutation $\varphi=\rho\alpha$ is also an isomorphism $\mathcal Q(G,\zeta)\to\mathcal Q(K,\xi)$ and it satisfies $e^\varphi=e$. The rest follows from Lemma \ref{Lm:CanIso}.
\end{proof}


Recall that two permutation groups acting on a set $Q$ are said to be \emph{equivalent} if they are conjugate in the symmetric group $S_Q$. Theorem \ref{Th:CanIso} shows that if the connected quandles $\mathcal Q(G,\zeta)$, $\mathcal Q(K,\xi)$ are isomorphic, then the transitive groups $G$, $K$ are equivalent, and the permutations $\zeta$, $\xi$ have the same cycle structures. While enumerating connected quandles of order $n$, it therefore suffices to investigate transitive groups of degree $n$ up to equivalence, which is the usual way transitive groups are cataloged in computational packages. The following result solves the isomorphism problem for a fixed transitive group $G$.

\begin{corollary}\label{Cr:CanIso}
Let $(G,\zeta)$, $(G,\xi)$ be quandle envelopes on a set $Q$ with a fixed element $e\in Q$. Then $\mathcal Q(G,\zeta)$ is isomorphic to $\mathcal Q(G,\xi)$ if and only if $\zeta$ and $\xi$ are conjugate in $N_{(\sym{Q})_e}(G)$, the normalizer of $G$ in the stabilizer of $e$ in the symmetric group $\sym{Q}$.
\end{corollary}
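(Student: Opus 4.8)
The plan is to specialize Theorem~\ref{Th:CanIso} to the case $K=G$ and then translate the abstract-isomorphism condition (iii) into the desired conjugacy statement inside $S_Q$. By Theorem~\ref{Th:CanIso}, $\mathcal Q(G,\zeta)\simeq\mathcal Q(G,\xi)$ holds if and only if there is an isomorphism $\psi:G\to G$ with $\zeta^\psi=\xi$ and $G_e^{\,\psi}=G_e$; equivalently, by the $A=B$ part of Lemma~\ref{Lm:CanIso}, if and only if there is a permutation $\varphi$ of $Q$ with $e^\varphi=e$, $\zeta^\varphi=\xi$ and $G^\varphi=G$. So I would start from condition (ii) of Theorem~\ref{Th:CanIso} with $K=G$.

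Now unravel what $\varphi$ is. The condition $e^\varphi=e$ says $\varphi\in(\sym Q)_e$. The condition $G^\varphi=G$ — where $G^\varphi$ means the conjugate $\varphi^{-1}G\varphi$ — says precisely that $\varphi$ normalizes $G$ in $\sym Q$, i.e. $\varphi\in N_{\sym Q}(G)$. Combining, $\varphi\in N_{\sym Q}(G)\cap(\sym Q)_e=N_{(\sym Q)_e}(G)$; here I should note that since $e^\varphi=e$ and $\varphi$ normalizes $G$, $\varphi$ also normalizes $G_e$, so the two descriptions of this intersection agree. Finally, $\zeta^\varphi=\xi$ with this $\varphi$ is exactly the statement that $\zeta$ and $\xi$ are conjugate in $N_{(\sym Q)_e}(G)$. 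This gives the forward direction and, reading the equivalences backwards, also the converse: if $\zeta^\varphi=\xi$ for some $\varphi\in N_{(\sym Q)_e}(G)$, then $\varphi$ witnesses condition (ii), so the quandles are isomorphic.

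I should be careful about one subtlety, which is really the only place anything could go wrong: Theorem~\ref{Th:CanIso} and Lemma~\ref{Lm:CanIso} require both pairs to be quandle \emph{envelopes} on $Q$ with the same fixed point $e$, so I must keep that hypothesis explicit (it is in the statement) and not accidentally weaken it to quandle folders. Everything else is a direct rewriting of already-proved equivalences, so there is no real obstacle; the proof is essentially a two-line deduction.

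\begin{proof}
By Theorem~\ref{Th:CanIso} (with $K=G$), $\mathcal Q(G,\zeta)\simeq\mathcal Q(G,\xi)$ if and only if there is a permutation $\varphi$ of $Q$ with $e^\varphi=e$, $\zeta^\varphi=\xi$ and $G^\varphi=G$. The condition $e^\varphi=e$ means $\varphi\in(\sym Q)_e$, and $G^\varphi=\varphi^{-1}G\varphi=G$ means $\varphi\in N_{\sym Q}(G)$; since $\varphi$ then normalizes $G$ and fixes $e$, it also normalizes $G_e$, so $\varphi\in N_{\sym Q}(G)\cap(\sym Q)_e=N_{(\sym Q)_e}(G)$. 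Hence such a $\varphi$ exists if and only if $\zeta$ and $\xi$ are conjugate in $N_{(\sym Q)_e}(G)$.
\end{proof}
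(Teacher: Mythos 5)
Your proof is correct and matches the paper's intended argument: the corollary is stated there without proof as an immediate consequence of Theorem~\ref{Th:CanIso}, and your specialization of condition (ii) to $K=G$, identifying $\{\varphi\in\sym{Q}\,:\,e^\varphi=e,\ G^\varphi=G\}$ with $N_{(\sym{Q})_e}(G)$, is exactly the deduction the authors have in mind. The aside about $\varphi$ normalizing $G_e$ is harmless but unnecessary, since $N_{(\sym{Q})_e}(G)=N_{\sym{Q}}(G)\cap(\sym{Q})_e$ by definition.
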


Another application of Lemma \ref{Lm:CanIso} reveals the structure of the automorphism group of a connected quandle in terms of its right multiplication group.
For a group $G$, a subgroup $H\le G$ and an element $x\in G$ we let
\begin{displaymath}
    \aut G_{x,H}=\{\psi\in\aut G\,:\,x^\psi=x,\ H^\psi=H\}\le\aut{G}.
\end{displaymath}

\begin{proposition}\label{Pr:Aut}
Let $Q=(Q,\cdot)$ be a connected quandle, $e\in Q$, and let $G=\rmlt Q$. Then $\aut{Q}$ is isomorphic to $\left(G\rtimes\aut G_{R_e,G_e}\right)/\{(\alpha,\phi_\alpha^{-1})\,:\,\alpha\in G_e\}.$
\end{proposition}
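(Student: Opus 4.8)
The plan is to realize $\aut Q$ as an image of the semidirect product $G\rtimes\aut G_{R_e,G_e}$ under a natural map, compute the kernel, and identify the quotient. Since $Q$ is connected, by Theorem~\ref{Th:Canonical} we have $\mathcal E(Q,\cdot)=(G,R_e)$ and $\mathcal Q(G,R_e)=(Q,\cdot)$, so it makes sense to apply Lemma~\ref{Lm:CanIso} with $(K,\xi)=(G,\zeta)=(G,R_e)$. By that lemma, the set $A$ of automorphisms of $Q$ fixing $e$ equals the set $B$ of permutations $\varphi$ of $Q$ with $e^\varphi=e$, $R_e^{\,\varphi}=R_e$ and $G^\varphi=G$, and $\varphi\mapsto\phi_\varphi$ is a bijection $B\to C$, where $C=\aut G_{R_e,G_e}$ in the notation of the proposition. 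In particular, every automorphism of $Q$ fixing $e$ is conjugation by a permutation normalizing $G$, fixing $R_e$ and stabilizing $G_e$.

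The key structural step is the following. First I would observe that an arbitrary $\gamma\in\aut Q$ factors as $\gamma=\beta\varphi$ with $\beta\in G$ (using transitivity of $G$, pick $\beta\in G$ with $e^{\gamma}=e^\beta$, then $\varphi=\beta^{-1}\gamma$ fixes $e$, and $\varphi\in A=B$). Thus, writing $\iota:G\to\aut Q$ for the inclusion (valid since $G=\rmlt Q\le\aut Q$) and $\kappa:C\to\aut Q$ for $\psi\mapsto\phi_\varphi$ where $\varphi=\psi^g$ is the permutation of $Q$ from Lemma~\ref{Lm:CanIso} with $\phi_\varphi=\psi$, I would define
\begin{displaymath}
    \Phi:G\rtimes C\to\aut Q,\qquad (\beta,\psi)\mapsto \beta\cdot\varphi,
\end{displaymath}
where $\varphi$ is the unique permutation of $Q$ in $B$ with $\phi_\varphi=\psi$, and $\beta\cdot\varphi$ denotes composition in $\aut Q$. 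The factorization above shows $\Phi$ is onto. To see $\Phi$ is a homomorphism, one must use that the action of $C$ on $G$ in the semidirect product is by the $\aut G$-action, i.e. $\varphi^{-1}\beta\varphi=\beta^{\phi_\varphi}=\beta^\psi$; this is exactly the compatibility that makes $(\beta_1,\psi_1)(\beta_2,\psi_2)=(\beta_1\beta_2^{\psi_1^{-1}},\psi_1\psi_2)$ (or the analogous convention) map correctly, and it is a routine check once the semidirect-product convention is pinned down.

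Finally I would compute $\ker\Phi$. If $\beta\varphi=\mathrm{id}_Q$ then $\varphi=\beta^{-1}\in G$, and $\varphi$ fixes $e$, so $\beta\in G_e$; conversely any $\beta\in G_e$ gives $\varphi=\beta^{-1}\in B$ with $\phi_\varphi=\phi_{\beta^{-1}}=\phi_\beta^{-1}\in C$ (note $\phi_\beta^{-1}$ does fix $R_e$ since $\beta\in G_e$ centralizes $R_e\in Z(G_e)$, and stabilizes $G_e$ since $G_e\unlhd$ is normalized by $\beta\in G_e$), so $(\beta,\phi_\beta^{-1})\in\ker\Phi$ and these are all of the kernel. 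Hence $\ker\Phi=\{(\alpha,\phi_\alpha^{-1}):\alpha\in G_e\}$ and $\aut Q\cong(G\rtimes\aut G_{R_e,G_e})/\{(\alpha,\phi_\alpha^{-1}):\alpha\in G_e\}$, as claimed. The main obstacle I anticipate is bookkeeping: matching the semidirect-product multiplication convention to the order in which one composes permutations in $\aut Q$ (recall maps act on the right here), so that $\Phi$ comes out as an honest homomorphism rather than an anti-homomorphism; the rest is a direct application of Lemma~\ref{Lm:CanIso} together with the fact that $R_e\in Z(G_e)$.
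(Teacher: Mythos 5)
Your proposal is correct and takes essentially the same route as the paper: both identify the stabilizer $\aut{Q}_e$ with $\aut{G}_{R_e,G_e}$ via Lemma \ref{Lm:CanIso}, use transitivity of $G$ to factor an arbitrary automorphism as $\beta\varphi$ with $\beta\in G$ and $\varphi$ fixing $e$, realize $\aut{Q}$ as the image of a homomorphism from the semidirect product, and compute the kernel as $\{(\alpha,\phi_\alpha^{-1})\,:\,\alpha\in G_e\}$. The only cosmetic difference is that you define the map directly on $G\rtimes\aut{G}_{R_e,G_e}$, whereas the paper defines it on $G\rtimes\aut{Q}_e$ and transports along the isomorphism afterwards.
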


\begin{proof}
By Theorem \ref{Th:Canonical}, we have $(Q,\cdot)=\mathcal Q(G,R_e)$. According to Lemma \ref{Lm:CanIso}, $\varphi\mapsto \conj{\varphi}$ is a bijection between $\aut{Q}_e$ and $\aut{G}_{R_e,G_e}$, which is easily seen to be a homomorphism.
Define $f:G\rtimes\aut{Q}_e\to\aut{Q}$ by $(\alpha,\varphi)^f=\alpha\varphi$. This is a homomorphism, since
\begin{displaymath}
    (\alpha,\varphi)^f(\beta,\psi)^f=\alpha\varphi\beta\psi=\alpha\beta^{\varphi^{-1}}\varphi\psi=((\alpha,\varphi)(\beta,\psi))^f.
\end{displaymath}
Since $G$ acts transitively on $Q$, every $\psi\in\aut{Q}$ can be decomposed as $\psi=\alpha\varphi$, where $\alpha\in G$ and $\varphi\in\aut{Q}_e$. Thus $f$ is surjective. The kernel of $f$ consists of all tuples $(\alpha,\varphi)$ with $\alpha\varphi=1$, hence $\varphi=\alpha^{-1}\in G\cap\aut{Q}_e=G_e$.
\end{proof}

\section{Latin quandles}\label{Sc:Latin}

A quandle $Q$ is called \emph{latin}, if also the left translations
\begin{displaymath}
    L_x:Q\to Q,\quad y\mapsto xy
\end{displaymath}
are permutations of $Q$. Every latin quandle is connected. Indeed, given $x,y\in Q$, let $z$ be the unique element such that $xz=y$, and we have $x^{R_z}=y$.

In this section, we determine when a finite quandle in the homogenous representation is latin, and which quandle envelopes correspond to latin quandles. For more details on latin quandles we refer to \cite{Sta-latin}.

\begin{lemma}[{\cite[Theorem 4.2]{Gal-ldq}}]
Let $G$ be a group, $f\in\aut G$ and $H\le C_G(f)$. Suppose that the quandle $Q=\qhom{G,H,f}$ is finite. Then $Q$ is latin if and only if, for every $a,u\in G$, \begin{equation}\label{Eq:Latin}
(u^{-1})^fu\in H^a\text{ implies }u\in H.
\end{equation}
\end{lemma}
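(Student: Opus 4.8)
The plan is to work directly with the homogeneous representation $Q = \qhom{G,H,f} = (G/H,*)$, where $Hx*Hy = H(xy^{-1})^fy$, and to recall from the proof of Lemma \ref{Lm:Homogeneous} that the right translations $R_{Hy}$ are always permutations. So $Q$ is latin if and only if every left translation $L_{Hy}$ is a permutation, and since $Q$ is finite this is equivalent to each $L_{Hy}$ being injective (equivalently surjective). Thus the whole statement reduces to analyzing, for fixed $Hy$, when the map $Hx \mapsto Hx*Hy$ is injective.

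First I would fix $y$ and compute when $Hx_1 * Hy = Hx_2 * Hy$, i.e.\ $H(x_1y^{-1})^fy = H(x_2y^{-1})^fy$. Cancelling $y$ on the right, this says $H(x_1y^{-1})^f = H(x_2y^{-1})^f$, and applying $f^{-1}$ (which normalizes $H$ since $H \le C_G(f)$) this is $Hx_1y^{-1} = Hx_2y^{-1}$, hence $Hx_1 = Hx_2$. Wait --- that computation shows $L_{Hy}$ is \emph{always} injective, which would make every finite homogeneous quandle latin, contradicting Example \ref{Ex:3}. The subtlety is that $x \mapsto Hx*Hy$ is not the left translation: the left translation sends the coset $Hx$ (viewed as an element of $G/H$) to $Hx * Hy$, but the correct thing to track is whether, as $Hx$ ranges over \emph{all} of $G/H$, the outputs $H(xy^{-1})^f y$ cover all of $G/H$; and the issue is precisely that distinct \emph{cosets} $Hx$ need not give distinct values of $(xy^{-1})^f$ modulo $H$ after the twist by $f$. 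Concretely $L_{Hy}$ fails to be injective exactly when there exist $x_1, x_2$ with $Hx_1 \ne Hx_2$ but $H(x_1y^{-1})^f y = H(x_2 y^{-1})^f y$. Setting $v = x_1 y^{-1}$ and $w = x_2 y^{-1}$, after cancelling $y$ and applying $f^{-1}$ we would get $Hv = Hw$, i.e.\ $Hx_1 = Hx_2$, which is a genuine contradiction --- so I need to be more careful and in fact the correct reformulation of latinity must involve the conjugates $H^a$ as the statement indicates.

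The right approach: by homogeneity it suffices to test a single left translation, but which one depends on the transitive action of $\aut Q$; the cleanest is to test surjectivity of $L_{He}$ (the translation by the coset $H = H\cdot 1$). So I would reduce to: $Q$ is latin iff for every $Hz \in G/H$ there is $Hx$ with $Hx * H = Hz$, i.e.\ $Hx^f = Hz$. But $f$ permutes the cosets $Hx$ only up to the issue that $f$ need not normalize $H$ --- ah, here $H \le C_G(f)$ \emph{does} force $H^f = H$, so $L_{He}$ is always bijective too. This tells me the nontrivial left translations are the ones to examine, and the conjugate subgroups $H^a$ enter because the stabilizer of the coset $Ha$ in the natural $G$-action is $H^a$. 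So I would instead phrase things via the permutation action: $L_{Hy}$ is a bijection iff the equation $Hx * Hy = Hz$ has a solution for all $z$; multiplying out and using $\widehat{\cdot}$-notation, translate into a condition purely about $G$, $H$, $f$. The equation $H(xy^{-1})^f y = Hz$, with the substitution $a = \widehat{y}$-type element and $u = (\text{something})$, rearranges to the form ``$(u^{-1})^f u \in H^a$'' governing when two preimages collapse, and injectivity (= latinity, since $Q$ is finite) becomes exactly ``$(u^{-1})^f u \in H^a \Rightarrow u \in H$'' for all $a, u$.

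So the steps in order are: (1) use finiteness to replace ``$L_{Hy}$ is a permutation for all $y$'' by ``$L_{Hy}$ is injective for all $y$''; (2) set up the collision equation $Hx_1 * Hy = Hx_2 * Hy$ and, being careful that $Hx_i$ are cosets not group elements, reduce it --- picking coset representatives and writing the discrepancy between two representatives of the same coset as an element of $H$ --- to a statement of the shape $(u^{-1})^f u \in H^a$ where $a$ encodes $y$ and $u$ encodes the ratio of the two candidate preimages; (3) observe that $u \in H$ is exactly the condition $Hx_1 = Hx_2$, so injectivity of all $L_{Hy}$ is precisely \eqref{Eq:Latin}; (4) check the converse direction symmetrically, that \eqref{Eq:Latin} forces every $L_{Hy}$ injective hence bijective. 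The main obstacle is step (2): correctly bookkeeping the passage from cosets in $G/H$ to representatives in $G$ without double-counting, and seeing why the conjugate $H^a$ (rather than $H$ itself) is the subgroup that appears --- this comes from the fact that the second argument $Hy$ shifts the relevant stabilizer by conjugation by $\widehat y$. Once that substitution is pinned down the rest is a routine unwinding of definitions.
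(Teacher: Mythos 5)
There is a genuine gap, and it is the same error three times over: throughout the proposal you analyze the wrong translation. In this paper $L_{Hx}$ sends $Hy\mapsto Hx*Hy$, so the \emph{first} argument is fixed and the \emph{second} varies. Your collision equation $Hx_1*Hy=Hx_2*Hy$ fixes the second argument and varies the first --- that is the right translation $R_{Hy}$, which is always a permutation. This is precisely why your computation yields injectivity unconditionally; the resolution is not a subtlety about cosets versus representatives, as you suggest, but simply that you never wrote down a left translation. The same confusion recurs in your ``right approach'': $Hx*H=Hz$ is again the right translation $R_H$, namely $Hx\mapsto Hx^f$, and its bijectivity says nothing about latinity. (Your claim that ``$L_{He}$ is always bijective'' cannot be right: by homogeneity all left translations are conjugate under automorphisms, so it would force every finite homogeneous quandle to be latin, contradicting e.g.\ the connected quandles of order $6$, for which $\ell(6)=0$.) The correct starting point is
\begin{displaymath}
H(xy^{-1})^fy=H(xz^{-1})^fz\ \Longrightarrow\ Hy=Hz,
\end{displaymath}
and the substantive content of the proof is the chain of algebraic manipulations turning this into $((u^{-1})^fu)^{z^f}\in H^{x^f}\Rightarrow u\in H$ with $u=yz^{-1}$ (this is where the conjugate $H^a$, $a=x^f$, actually comes from), together with the closing observation that the specialization $z=1$ already implies the general two-parameter condition because $H^{x^f(z^f)^{-1}}=H^{(xz^{-1})^f}$ is again a conjugate of $H$. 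Your outline replaces all of this with ``$u=(\text{something})$'' and ``rearranges to the form,'' so the essential computation --- and with it the reason the conjugates $H^a$ appear at all --- is missing.
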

\begin{proof}
A finite quandle is latin if and only if every left translation is one-to-one. For $x\in G$, the following statements are then equivalent:
\begin{align*}
    &\text{$L_{xH}$ is one-to-one,}\\
    &\text{$H(xy^{-1})^fy=H(xz^{-1})^fz$ implies $Hy=Hz$},\\
    &\text{$(xy^{-1})^fyz^{-1}(zx^{-1})^f\in H$ implies $yz^{-1}\in H$},\\
    &\text{$(y^{-1})^fyz^{-1}z^f\in H^{x^f}$ implies $yz^{-1}\in H$},\\
    &\text{$((u^{-1})^fu)^{z^f}\in H^{x^f}$ implies $u\in H$},
\end{align*}
where in the last equivalence we have used the substitution $u=yz^{-1}$. Now, if every $L_{xH}$ is one-to-one, we obtain \eqref{Eq:Latin} from the last line above by taking $z=1$ and $x^f=a$. Conversely, to prove that any $L_{xH}$ is one-to-one, consider $u$, $z$ such that $((u^{-1})^fu)^{z^f}\in H^{x^f}$. Then $(u^{-1})^fu\in H^{(xz^{-1})^f}$, and we can use \eqref{Eq:Latin} to conclude that $u\in H$.
\end{proof}


\begin{proposition}\label{Pr:Latin1}
Let $Q$ be a finite homogeneous quandle, $e\in Q$, and let $G$ be a normal subgroup of $\aut Q$ that is transitive on $Q$. Then $Q$ is latin if and only if for every $\alpha\in G\smallsetminus G_e$ the commutator $[R_e,\alpha]$ has no fixed points.
\end{proposition}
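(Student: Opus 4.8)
The plan is to transport the latin condition through the homogeneous representation $\pi_e$ of Proposition~\ref{Pr:HomogeneousRepresentation} (with $f$ the restriction of $\conj{R_e}$ to $G$), so that left translations of $Q$ correspond to maps on $G/G_e$, and then translate the injectivity of those maps into a statement about the fixed-point behaviour of commutators $[R_e,\alpha]$. Since $Q$ is finite, latin is equivalent to every left translation being one-to-one (Section~\ref{Sc:Latin}), and via the isomorphism $\qhom{G,G_e,f}\simeq Q$ this is equivalent to condition \eqref{Eq:Latin} of the preceding lemma, namely that for all $a,u\in G$, $(u^{-1})^f u\in G_e^{\,a}$ implies $u\in G_e$.

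First I would rewrite $(u^{-1})^fu$ in multiplicative-group notation. Using $f=\conj{R_e}$ restricted to $G$, we have $(u^{-1})^f u = R_e^{-1}u^{-1}R_e u = [R_e,u]$ in the notation $[x,y]=x^{-1}x^y$ of the paper; more precisely $[R_e,u] = R_e^{-1}R_e^{\,u} = R_e^{-1}u^{-1}R_eu$, and this equals $(u^{-1})^{f}u$ after reinterpreting. So \eqref{Eq:Latin} says: $[R_e,u]\in G_e^{\,a}$ implies $u\in G_e$. Next I would observe that $G_e^{\,a}$ is exactly the stabilizer $G_{e^a}$ of the point $e^a$; as $a$ ranges over $G$ and $G$ is transitive, $G_e^{\,a}$ ranges over all point stabilizers $G_x$, $x\in Q$. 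Hence the condition becomes: for every $u\in G$, if $[R_e,u]$ fixes some point of $Q$, then $u\in G_e$. Contrapositively, for every $\alpha\in G\smallsetminus G_e$, the element $[R_e,\alpha]$ fixes no point of $Q$, which is precisely the asserted condition. The two implications then match up directly: latin $\Rightarrow$ \eqref{Eq:Latin} $\Rightarrow$ the fixed-point-free statement, and conversely.

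The one point requiring a little care — and the main (modest) obstacle — is the bookkeeping identifying "$(u^{-1})^f u$ lies in a conjugate of $H=G_e$" with "$[R_e,u]$ has a fixed point", making sure the conjugating element and the point match: namely that $g\in G_e^{\,a}=G_{e^a}$ is equivalent to $g$ fixing the point $e^a$, and that every point of $Q$ arises as some $e^a$ by transitivity. I would also note at the outset that $[R_e,\alpha]\in\dis Q\le G$ (from \eqref{Eq:RConj}), so it makes sense to ask about its fixed points on $Q$, and that for $\alpha\in G_e$ one trivially has $[R_e,\alpha]=1$, so restricting to $\alpha\in G\smallsetminus G_e$ loses nothing. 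The rest is a routine substitution into the previous lemma, specialized to the case $G\unlhd\aut Q$ transitive, $H=G_e$, $f=\conj{R_e}|_G$.
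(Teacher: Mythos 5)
Your proposal is correct and follows essentially the same route as the paper: invoke the homogeneous representation $Q\simeq\qhom{G,G_e,f}$ with $f=\conj{R_e}|_G$, apply the criterion \eqref{Eq:Latin}, rewrite $(u^{-1})^fu$ as $[R_e,u]$, and use $G_e^{\,a}=G_{e^a}$ together with transitivity to turn membership in a conjugate of $G_e$ into the existence of a fixed point. The bookkeeping points you flag (the point--stabilizer correspondence and the triviality of the condition for $\alpha\in G_e$) are exactly the ones the paper's proof handles.
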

\begin{proof}
Consider the homogeneous representation $Q\simeq\qhom{G,G_e,f}$ from Proposition \ref{Pr:HomogeneousRepresentation}, i.e., we have $\alpha^f=\alpha^{R_e}$ for every $\alpha\in G$. Condition \eqref{Eq:Latin} says that, for every $\alpha,\beta\in G$, if $(\alpha^{-1})^{R_e}\alpha\in G_e^{\beta}$ then $\alpha\in G_e$.
Since $G_e^{\beta}=G_{e^\beta}$ and $G$ is transitive, we can reformulate the condition as follows: for every $\alpha\in G$ and $x\in Q$, if $[R_e,\alpha]\in G_x$ then $\alpha\in G_e$. In other words, if $[R_e,\alpha]$ has a fixed point then $\alpha\in G_e$.
\end{proof}

\begin{proposition}\label{Pr:Latin2}
Let $(G,\zeta)$ be a quandle envelope with $G$ finite. Then $\mathcal Q(G,\zeta)$ is a latin quandle if and only if for every $\alpha\in G\smallsetminus G_e$ the commutator $[\zeta,\alpha]$ has no fixed points.
\end{proposition}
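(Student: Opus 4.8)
The plan is to deduce this directly from the canonical correspondence together with Proposition \ref{Pr:Latin1}, so that no genuinely new argument is needed. First I would invoke Theorem \ref{Th:Canonical}: since $(G,\zeta)$ is a quandle envelope, $Q:=\mathcal Q(G,\zeta)$ is a connected quandle, its right multiplication group is $\rmlt Q=\gen{\zeta^G}=G$ (using Lemma \ref{Lm:Q}(v) together with the envelope condition), and the right translation by $e$ in $Q$ equals $\zeta$. The last point is the computation already made in the proof of Theorem \ref{Th:Canonical}: $e^{\widehat e}=e$ forces $\widehat e\in G_e$, whence $x^{R_e}=x^{\zeta^{\widehat e}}=x^\zeta$ because $\zeta\in Z(G_e)$.

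Next I would check that the hypotheses of Proposition \ref{Pr:Latin1} are met for $Q$ with the distinguished normal transitive subgroup taken to be $G=\rmlt Q$ itself. The quandle $Q$ is finite since $G$ is finite and $Q$ is in bijection with $G/G_e$; it is homogeneous because it is connected; and $G=\rmlt Q$ is a transitive subgroup of $\aut Q$ which is normal in $\aut Q$ by Proposition \ref{Pr:DQ}(i). One should also note that the stabilizer of $e$ inside the permutation group $G$ is precisely $G_e$, so the notation is consistent with that of Proposition \ref{Pr:Latin1}.

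Applying Proposition \ref{Pr:Latin1} then yields that $Q$ is latin if and only if for every $\alpha\in G\smallsetminus G_e$ the commutator $[R_e,\alpha]$ has no fixed points. Substituting $R_e=\zeta$ from the first step, this is exactly the asserted condition $[\zeta,\alpha]$ has no fixed points for all $\alpha\in G\smallsetminus G_e$, which finishes the proof.

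I do not expect a real obstacle here: all the substance has been front-loaded into Theorem \ref{Th:Canonical} and Proposition \ref{Pr:Latin1}. The only points requiring care are the bookkeeping identification $R_e=\zeta$ and the verification that $G=\rmlt Q$ may legitimately serve as the normal transitive subgroup in Proposition \ref{Pr:Latin1}; once these are settled the equivalence is immediate.
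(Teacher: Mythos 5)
Your proposal is correct and follows exactly the route the paper takes: apply Proposition \ref{Pr:Latin1} to $Q=\mathcal Q(G,\zeta)$ with the normal transitive subgroup $G=\rmlt Q$, using Theorem \ref{Th:Canonical} to identify $\rmlt Q$ with $G$ and $R_e$ with $\zeta$. The extra bookkeeping you supply (normality of $\rmlt Q$ in $\aut Q$, finiteness, the identification of stabilizers) is exactly what the paper leaves implicit.
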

\begin{proof}
Using Theorem \ref{Th:Canonical}, we obtain the claim by applying Proposition \ref{Pr:Latin1} to $Q=\mathcal Q(G,\zeta)$ and $G=\rmlt Q$.
\end{proof}

\section{Connected affine quandles}\label{Sc:Affine}

Let $(A,+)$ be an abelian group. Then
\begin{displaymath}
    \aff{A,+} = \{x\mapsto c+x^f\,:\,c\in A,\,f\in\aut{A,+}\}
\end{displaymath}
is a subgroup of the symmetric group over $A$, and the elements of $\aff{A,+}$ are called \emph{affine mappings} over $(A,+)$. The set of translations
\begin{displaymath}
    \mlt{A,+} = \{x\mapsto c+x\,:\,c\in A\}
\end{displaymath}
is a subgroup of $\aff{A,+}$.

Recall from Example \ref{Ex:Affine} that $\qaff{A,f}$, where $f\in\aut A$, denotes the affine quandle $(A,*)$ with multiplication $x*y=x^f+y^{1-f}$. In $\qaff{A,f}$,
\[    x^{R_y} = x^f+y^{1-f},\qquad    x^{R_y^{-1}} = x^{f^{-1}}+y^{1-f^{-1}},\]
hence the right translations are affine mappings over $(A,+)$ and $\rmlt{\qaff{A,f}}$ is a subgroup of $\aff{A,+}$. In calculations, it is useful to remember that the group $\aff{A,+}$ is isomorphic to $(A,+)\rtimes\aut{A,+}$, the holomorph of $(A,+)$, where the mapping $x\mapsto c+x^f$ corresponds to the pair $(c,f)$.

\begin{proposition}
Let $Q=\qaff{A,f}$ be an affine quandle. Then \[\dis Q=\{x\mapsto x+c\,:\,c\in\im{1-f}\},\] hence $\dis Q$ is isomorphic to $\im{1-f}$.
\end{proposition}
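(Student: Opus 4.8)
The plan is to compute $\dis{Q}$ directly from its generating set using the explicit formulas for the right translations in the affine quandle. Since $Q = \qaff{A,f}$, we have $x^{R_y} = x^f + y^{1-f}$ and $x^{R_y^{-1}} = x^{f^{-1}} + y^{1 - f^{-1}}$, so every generator $R_a^{-1}R_b$ acts by
\begin{displaymath}
    x^{R_a^{-1}R_b} = (x^{f^{-1}} + a^{1-f^{-1}})^f + b^{1-f} = x + a^{f-1} + b^{1-f} = x + (b-a)^{1-f}.
\end{displaymath}
Thus each generator of $\dis{Q}$ is the translation by an element of $\im{1-f}$, and since $\im{1-f}$ is a subgroup of $(A,+)$ closed under the group operation, the subgroup generated by these translations is contained in $\{x\mapsto x+c : c\in\im{1-f}\}$.

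For the reverse inclusion, I would note that an arbitrary element $c\in\im{1-f}$ has the form $c = (d)^{1-f}$ for some $d\in A$; taking $a = 0$ and $b = d$ in the computation above shows that the translation by $c$ equals $R_0^{-1}R_d$ and hence lies in $\dis{Q}$. Therefore $\dis{Q} = \{x\mapsto x+c : c\in\im{1-f}\}$ exactly. The final isomorphism claim is then immediate: the map sending the translation $x\mapsto x+c$ to $c$ is a bijection onto $\im{1-f}$, and it is a group homomorphism because composing the translations by $c$ and $c'$ gives the translation by $c+c'$.

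I do not expect any serious obstacle here; the proof is a routine verification once one has the affine formulas for $R_y$ and $R_y^{-1}$ in hand. The only point requiring a little care is keeping track of signs and the passage between additive notation for $A$ and the exponential notation for the action of $f$ (recalling that $(b-a)^{1-f}$ means $(b-a) - (b-a)^f$); in particular one should double-check that the $x^{f^{-1}\cdot f} = x$ cancellation and the cancellation of the $a$-terms indeed leave precisely $(b-a)^{1-f}$ and no stray dependence on $x$. Once that computation is confirmed, both inclusions and the isomorphism statement follow at once.
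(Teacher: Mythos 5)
Your proposal is correct and follows essentially the same route as the paper: compute $z^{R_a^{-1}R_b} = z + (b-a)^{1-f}$ to get one inclusion, realize an arbitrary translation by an element of $\im{1-f}$ as a single generator (you use $R_0^{-1}R_d$, the paper uses $R_x^{-1}R_0$, an immaterial difference) for the other, and note that translation-by-$c$ $\mapsto c$ is a group isomorphism. No gaps.
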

\begin{proof}
Let $T=\{x\mapsto x+c\,:\,c\in\im{1-f}\}$. If we show that $\dis Q=T$, then the mapping $\varphi:\im{1-f}\to\dis Q$ which maps $c$ to the translation by $c$ is an isomorphism. Note that $T$ is closed with respect to composition. For the inclusion $\dis Q\subseteq T$, we calculate
\begin{displaymath}
    z^{R_x^{-1}R_y}=(z^{f^{-1}}+x^{1-f^{-1}})^{f}+y^{1-f}=z+x^{(1-f^{-1})f}+y^{1-f}=z+x^{f-1}+y^{1-f},
\end{displaymath}
so $z^{R_x^{-1}R_y}=z+c$ with the constant $c=(x^{-1})^{1-f}+y^{1-f}\in\im{1-f}$. The generators of $\dis Q$ are therefore in $T$, and $\dis{Q}\le T$ follows.

For the other inclusion $\dis Q\supseteq T$, given $c\in\im{1-f}$, choose $x\in A$ so that $x^{f-1}=c$, and verify that $z^{R_x^{-1}R_0}=(z^{f^{-1}}+x^{1-f^{-1}})^{f}=z+c$.
\end{proof}

\begin{corollary}\label{Cr:AffineOnto}
An affine quandle $\qaff{A,f}$ is connected if and only if $1-f$ is onto.
\end{corollary}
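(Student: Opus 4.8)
The plan is simply to combine the preceding Proposition (the description of $\dis Q$) with Proposition \ref{Pr:DQ}(iv). Recall that, by definition, $\qaff{A,f}$ is connected exactly when $\rmlt{\qaff{A,f}}$ acts transitively on $A$; and by Proposition \ref{Pr:DQ}(iv) the groups $\rmlt{\qaff{A,f}}$ and $\dis{\qaff{A,f}}$ have the same orbits on $A$, so connectedness is equivalent to transitivity of $\dis{\qaff{A,f}}$. By the Proposition, $\dis{\qaff{A,f}}$ is precisely the group of translations $x\mapsto x+c$ with $c$ ranging over the subgroup $\im{1-f}\le A$.

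Now argue both implications. If $1-f$ is onto, then $\im{1-f}=A$, so $\dis{\qaff{A,f}} = \mlt{A,+}$ is the full translation group of $(A,+)$, which acts (sharply) transitively on $A$; hence $\rmlt{\qaff{A,f}}\ge\dis{\qaff{A,f}}$ is transitive, i.e. $\qaff{A,f}$ is connected. Conversely, if $\qaff{A,f}$ is connected, then $\dis{\qaff{A,f}}$ is transitive, so for any $a\in A$ there is $\delta\in\dis{\qaff{A,f}}$ with $0^\delta=a$; writing $\delta$ as the translation by some $c\in\im{1-f}$ yields $a = 0+c = c\in\im{1-f}$. As $a\in A$ was arbitrary, $\im{1-f}=A$, i.e. $1-f$ is onto.

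There is essentially no obstacle here: the entire content has already been established in the preceding Proposition and in Proposition \ref{Pr:DQ}(iv), and what remains is the one-line observation that a group of translations of $(A,+)$ by elements of a subgroup $B\le A$ is transitive on $A$ if and only if $B=A$.
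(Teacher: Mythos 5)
Your proposal is correct and is exactly the derivation the paper intends (the corollary is stated without proof immediately after the description of $\dis{\qaff{A,f}}$ as the translations by elements of $\im{1-f}$, and the intended argument is precisely to combine that with Proposition \ref{Pr:DQ}(iv)). The only step you supply beyond the cited results — that translations by a subgroup $B\le A$ act transitively on $A$ iff $B=A$ — is the correct one-line observation needed to finish.
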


Consequently, if $Q$ is a connected affine quandle, then the isomorphism type of the underlying abelian group $(Q,+)$ is uniquely determined by the quandle. Indeed, $(Q,+)=\im{1-f}\simeq\dis Q$. (An analogous statement does not hold for disconnected affine quandles which can be supported by non-isomorphic groups.)

Another consequence is that a finite affine quandle is connected if and only if it is latin. A~stronger result is proved in \cite[Theorem 5.10]{CEHSY}: \emph{A finite left and right distributive quandle is connected if and only if it is latin.} Infinite connected affine quandles need not be latin, however. Indeed, in $\qaff{\Z_{p^\infty},1-p}$, the mapping $1-(1-p)=p$ is onto but not one-to-one.

We are now going to establish a characterization of connected quandles that are affine, or, equivalently, medial. Condition (iii) below provides a computationally efficient criterion for checking whether a connected quandle is affine. The crucial point is condition (iv), which is interesting in its own right and will be used in Section \ref{Sc:P}. We were not able to find the characterization of Theorem \ref{Th:Affine} in the literature.

\begin{theorem}\label{Th:Affine}
The following conditions are equivalent for a connected quandle $Q$:
\begin{enumerate}
\item[(i)] $Q$ is affine.
\item[(ii)] $Q$ is medial.
\item[(iii)] $\rmlt{Q}'$ is abelian.
\item[(iv)] There is an abelian group $A=(Q,+)$ such that $\mlt{A}\le\rmlt{Q}\le\aff{A}$.
\end{enumerate}
\end{theorem}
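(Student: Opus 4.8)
The plan is to prove the cycle of implications (i)$\Rightarrow$(iii)$\Rightarrow$(iv)$\Rightarrow$(i), together with the separate equivalence (i)$\Leftrightarrow$(ii), which is the easy part: (i)$\Rightarrow$(ii) is Example~\ref{Ex:Affine}, and (ii)$\Rightarrow$(iii) follows from Proposition~\ref{Pr:DQ2}(ii) together with Proposition~\ref{Pr:DQConnected}, since mediality gives that $\dis Q$ is abelian and connectedness gives $\rmlt Q'=\dis Q$. The implication (iv)$\Rightarrow$(i) is also straightforward: if $\mlt A\le\rmlt Q\le\aff A$ then the right translations $R_x$ are affine maps $z\mapsto c_x + z^{g_x}$, and idempotence $x^{R_x}=x$ pins down the relation between $c_x$ and $g_x$; one then shows all $g_x$ are equal to a single automorphism $f$ (using right distributivity, i.e.\ that each $R_x$ is an automorphism of $Q$, or a transitivity argument comparing $R_x$ and $R_y$), which forces $Q\cong\qaff{A,f}$.

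The real work is (i)$\Rightarrow$(iv): from a connected quandle with $\rmlt Q'$ abelian, we must manufacture an abelian group structure $A=(Q,+)$ on the underlying set so that $\rmlt Q$ sits between $\mlt A$ and $\aff A$. The natural source of this abelian group is $D=\dis Q=\rmlt Q'$ itself: by Proposition~\ref{Pr:DQ}(iv), $D$ acts transitively on $Q$, and by hypothesis $D$ is abelian. A transitive abelian permutation group is automatically regular (the stabilizer of a point is normal, hence trivial since it fixes a point and the group is transitive), so fixing $e\in Q$ the orbit map $D\to Q$, $\sigma\mapsto e^\sigma$, is a bijection; transporting the group operation of $D$ through this bijection defines an abelian group $A=(Q,+)$ with identity $e$. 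It then remains to check (a) that $\mlt A\le\rmlt Q$, which holds because $\mlt A$ is exactly the image of $D$ acting on itself by translation, i.e.\ $\mlt A = D = \dis Q \le \rmlt Q$; and (b) that $\rmlt Q\le\aff A$, i.e.\ that each $R_x\in\rmlt Q$ normalizes $\mlt A$ inside $\sym Q$. For (b), recall $\rmlt Q$ normalizes $\dis Q$ (it is a normal subgroup, being the derived subgroup — or by Proposition~\ref{Pr:DQ}(i)), and normalizing $D=\mlt A$ as a subgroup of $\sym Q$ is exactly the condition to lie in $\aff A = \mlt A\rtimes\aut A$; one uses the description of $\aff A$ as the normalizer of $\mlt A$ in $\sym A$ (equivalently, via the holomorph identification mentioned before Theorem~\ref{Th:Affine}).

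The step I expect to be the main obstacle is the bookkeeping in (i)$\Rightarrow$(iv) around basepoints and the precise identification of $\mlt A$ with $\dis Q$ \emph{as permutation groups on $Q$}. The subtlety is that $A=(Q,+)$ is defined by transporting $D$'s multiplication along $\sigma\mapsto e^\sigma$, so one must verify that under this identification the left-translation-by-$c$ map on $A$ corresponds to the original action of the corresponding element of $D$ on $Q$ — this is where commutativity of $D$ is genuinely used (for a nonabelian regular group the transported left translations would be the \emph{right} regular action, not the left one, and the two need not coincide). Once that identification is nailed down, everything else is a normality/normalizer argument using results already in hand (Propositions~\ref{Pr:DQ}, \ref{Pr:DQConnected}), plus the routine verification in (iv)$\Rightarrow$(i) that a connected quandle whose right multiplication group lies in an affine group must have all its right translations share the same linear part, hence be of the form $\qaff{A,f}$.
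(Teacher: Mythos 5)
Your proposal is correct, and its core construction for (iii)$\Rightarrow$(iv) --- transporting the abelian, transitive, hence regular group $\dis Q=\rmlt Q'$ to an abelian group structure $A=(Q,+)$ via the orbit map, so that $\mlt A=\dis Q$ as permutation groups --- is exactly the paper's. The two places where you diverge are both legitimate and arguably cleaner. For $\rmlt Q\le\aff A$ you invoke $\dis Q\unlhd\rmlt Q$ together with the identification of $\aff A$ as $N_{\sym Q}(\mlt A)$ (the holomorph, which equals $\aff A$ precisely because $A$ is abelian); the paper instead computes directly that $R_e\in\aut A$ and $x\cdot y=x^{R_e}+y^{1-R_e}$, which costs a little more calculation but yields (i) as a byproduct. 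For (iv)$\Rightarrow$(i) the paper routes through Theorem \ref{Th:Canonical}, writing $R_y=R_0^{\,\widehat y}$ and using $R_0\in Z(\rmlt Q_0)$ to commute the linear part of $\widehat y$ past $R_0$; your version avoids the envelope machinery entirely. One caution on that step: writing $z^{R_x}=x+(z-x)^{g_x}$ (after using idempotence to eliminate $c_x$), conjugating $R_y$ by $R_x$ via \eqref{Eq:RConj} only gives $g_{yx}=g_x^{-1}g_yg_x$, which does \emph{not} by itself force the $g_x$ to coincide. The correct move is to conjugate by the translations: since $\mlt A\le\rmlt Q\le\aut Q$, equation \eqref{Eq:RConj} applied to $\tau_a\colon z\mapsto z+a$ gives $R_{x+a}=R_x^{\,\tau_a}$, whose linear part is again $g_x$; transitivity of $\mlt A$ then yields $g_x=f$ for all $x$ and $Q=\qaff{A,f}$. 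So make sure the ``transitivity argument'' is the translation conjugation, not conjugation by right translations; with that pinned down the proof is complete.
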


\begin{proof}
(i) $\Rightarrow$ (ii) $\Rightarrow$ (iii): We have already seen in Example \ref{Ex:Affine} that every affine quandle is medial. By Propositions \ref{Pr:DQConnected} and \ref{Pr:DQ2}, every connected medial quandle $Q$ has $\rmlt{Q}'=\dis{Q}$ abelian.

(iii) $\Rightarrow$ (iv): Fix $e\in Q$. Since $\rmlt{Q}'$ is abelian and transitive (by Propositions \ref{Pr:DQ} and \ref{Pr:DQConnected}), it is sharply transitive. Thus for every $y\in Q$ there is a unique $\widehat y\in\rmlt{Q}'$ such that $e^{\widehat y}=y$. Define $A=(Q,+)$ by $$x+y = x^{\widehat y}.$$ We claim that $\varphi:A\to\rmlt{Q}'$, $x\mapsto\widehat x$ is an isomorphism and hence that $A$ is an abelian group. Indeed, $\varphi$ is clearly a bijection, we have $e^{\widehat{x^{\widehat y}}} = x^{\widehat y} = e^{\widehat x\widehat y}$, thus $\widehat{x^{\widehat y}} = \widehat{x}\widehat{y}$ by sharp transitivity, and so $(x+y)^\varphi = (x^{\widehat y})^\varphi = \widehat{x^{\widehat y}} = \widehat x\widehat y = x^\varphi y^\varphi$.

Since the right translation by $y$ in $A$ is $\widehat y\in\rmlt{Q}'$, we have $\mlt{A}=\rmlt Q'\le\rmlt{Q}$. To prove that $\rmlt Q\leq\aff{A}$, it suffices to show that $R_e\in\aut A$ and $x\cdot y=x^{R_e}+y^{1-R_e}$, because then $R_y\in\aff{A}$ for every $y\in Q$. We have $R_e\in\aut A$ iff $(x+y)^{R_e} = x^{\widehat y R_e}$ is equal to $x^{R_e}+y^{R_e} = x^{R_e \widehat{y^{R_e}}}$ for every $x$, $y\in Q$, which is equivalent to $\widehat y^{R_e} = \widehat {y^{R_e}}$ for every $y\in Q$. Taking advantage of sharp transitivity, the last equality is verified by $e^{\widehat y^{R_e}} = y\cdot e = e^{\widehat {y^{R_e}}}$. We have $(Q,\cdot)=\mathcal Q(\mathcal E(Q,\cdot))$ by Theorem \ref{Th:Canonical}, and hence
\begin{displaymath}
    x\cdot y = x^{R_e^{\,\widehat y}} = x^{\widehat y^{-1}R_e\widehat y} = (x-y)^{R_e} + y = y^{1-R_e} + x^{R_e}.
\end{displaymath}

(iv) $\Rightarrow$ (i): Let $0$ be the identity element of $A=(Q,+)$. Fix $y\in Q$ and denote by $\rho_y$ the right translation by $y$ in $A$. By Theorem \ref{Th:Canonical}, we have $R_y = R_0^{\,\widehat y}$ for some $\widehat y\in\rmlt{Q}$ such that $0^{\widehat y}=y$. Since $\rmlt{Q}\le\aff{A}$, there are $c\in Q$ and $g\in\aut{A}$ such that $x^{\widehat y} = c+x^g$ for every $x\in Q$. But $c=0^{\widehat y}=y$, so $x^{\widehat y} = y+x^g$ and $\widehat y = g\rho_y$. Since $\mlt{A}\le\rmlt{Q}$, we have $g=\widehat y\rho_y^{-1}\in\rmlt{Q}$. Hence $g\in\rmlt Q_0$, and since $R_0\in Z(\rmlt{Q}_0)$, we obtain $gR_0=R_0g$. Since $0^{R_0} = 0$ by idempotence, we have not only $R_0\in\aff{A}$ but in fact $R_0\in\aut{A}$. Using all these facts, we calculate
\begin{displaymath}
    x\cdot y = x^{R_0^{\,\widehat y}} = x^{\widehat y^{-1}R_0\widehat y} = x^{\rho_y^{-1}g^{-1}R_0 g \rho_y} = x^{\rho_y^{-1}R_0\rho_y} = (x-y)^{R_0}+y= x^{R_0}+y^{1-R_0}
\end{displaymath}
for every $x$, $y\in Q$, proving that $(Q,\cdot) = \qaff{A,R_0}$ is an affine quandle.
\end{proof}

\begin{corollary}\label{Cr:Affine}
The following conditions are equivalent for a quandle envelope $(G,\zeta)$:
\begin{enumerate}
\item[(i)] $\mathcal Q(G,\zeta)$ is affine.
\item[(ii)] $\mathcal Q(G,\zeta)$ is medial.
\item[(iii)] $G$ is metabelian.
\item[(iv)] There is an abelian group $A$ such that $\mlt{A}\le G\le\aff{A}$.
\end{enumerate}
\end{corollary}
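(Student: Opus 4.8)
The plan is to deduce this immediately from Theorem~\ref{Th:Affine} via the canonical correspondence. Put $Q=\mathcal Q(G,\zeta)$; by Lemma~\ref{Lm:Q}(vi) this is a connected quandle, and by Lemma~\ref{Lm:Q}(v) (equivalently, from $\mathcal E(\mathcal Q(G,\zeta))=(G,\zeta)$ in Theorem~\ref{Th:Canonical}) we have $\rmlt{Q}=\gen{\zeta^G}=G$, an equality of subgroups of $\sym{Q}$, not merely an abstract isomorphism. Thus everything the corollary asserts about $G$ as a permutation group on $Q$ is encoded by $\rmlt Q$, and Theorem~\ref{Th:Affine} applies to $Q$ with $G$ in place of $\rmlt Q$.

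With this identification in hand, conditions (i) and (ii) of the corollary are verbatim conditions (i) and (ii) of Theorem~\ref{Th:Affine} applied to $Q$. For condition (iii), $G$ being metabelian means precisely that $G'$ is abelian; since $G=\rmlt Q$, this is exactly condition (iii) of Theorem~\ref{Th:Affine}. For condition (iv), observe that an inclusion $\mlt A\le G\le\aff A$ of permutation groups on $Q$ forces the underlying set of the abelian group $A$ to be $Q$ itself (that is the set on which $\aff A$, hence $G$, acts), so $A=(Q,+)$ for some abelian group structure $+$ on $Q$; then condition (iv) of the corollary coincides with condition (iv) of Theorem~\ref{Th:Affine} with $G=\rmlt Q$. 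Combining these four translations with the equivalences already established in Theorem~\ref{Th:Affine} yields the claimed four-way equivalence.

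This argument has essentially no obstacle: the genuine content lies in Theorem~\ref{Th:Affine}, and here we merely bookkeep the passage through $\mathcal Q$. The one point that deserves a sentence of care is the remark above about condition (iv) --- ensuring the ``abelian group $A$'' appearing in the corollary is (or may be taken to be) supported on $Q$, so that the stated inclusions of permutation groups make sense and match the theorem. One can sidestep even this by noting that in the proof of (iii)$\Rightarrow$(iv) of Theorem~\ref{Th:Affine} the witnessing group is the explicit $A=(Q,+)$ built from $\rmlt Q'$, so that same $A$ serves here.
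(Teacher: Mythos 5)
Your proposal is correct and matches the paper's (implicit) argument exactly: the corollary is stated as an immediate consequence of Theorem~\ref{Th:Affine} applied to $Q=\mathcal Q(G,\zeta)$, using the identification $G=\rmlt{Q}$ as permutation groups on $Q$ furnished by Theorem~\ref{Th:Canonical}. Your extra care about condition (iv) --- that the abelian group $A$ must be supported on $Q$ for the permutation-group inclusions to be meaningful --- is a sensible and accurate clarification of a point the paper leaves tacit.
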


Theorem \ref{Th:Affine} is related to the Toyoda-Bruck theorem \cite{Bru} which states that medial quasigroups are affine.

It is not hard to check that two connected affine quandles $\qaff{A,f}$, $\qaff{A,g}$ are isomorphic if and only if $f$ and $g$ are conjugate in $\aut{A}$. (See \cite[Lemma 1.33]{AG} for a proof, and \cite{Hou} for a generalization that includes disconnected quandles.) Therefore, to enumerate connected affine quandles with $n$ elements up to isomorphism, it suffices to consider abelian groups of order $n$ up to isomorphism, and for each group $A$ all automorphisms $f\in\aut A$ such that $1-f$ is also an automorphism, up to conjugation in $\aut{A}$.

\begin{example}
Let us enumerate connected affine quandles of prime size $p$. We can assume that $A=\mathbb Z_p$ and consider all $f\in\aut{\mathbb Z_p}\simeq\mathbb Z_p^*$ such that $1-f\ne 0$, that is, $f\ne 1$. Since $\aut{\mathbb Z_p}$ is abelian, conjugacy plays no role, and we obtain $p-2$ connected affine quandles with $p$ elements.
\end{example}

An enumeration of small affine quandles has been completed by Hou in \cite{Hou}. It turns out that the function counting affine quandles of size $n$ up to isomorphism is multiplicative (in the number-theoretic sense), hence one can focus on prime powers. Hou found explicit formulas for the number of affine quandles (and connected affine quandles) for any prime power $p^k$ with $1\le k\le 4$. See \cite[equations (4.1) and (4.2)]{Hou} for the formulas, \cite[Table 1]{Hou} for the complete list of affine quandles, and also the values $a(n)$ in our Table \ref{Tab:Enum}. For example, on $p^2$ elements, there are precisely $2p^2-3p-1$ connected affine quandles, of which $p^2-2p$ are based on $A=\Z_{p^2}$ and $p^2-p-1$ on $A=\Z_p\times\Z_p$. As we shall see in Theorems \ref{Th:p} and \ref{Th:p^2}, all connected quandles with $p$ or $p^2$ elements are affine.

\section{Enumerating small connected quandles}\label{Sc:Small}

Suppose that we wish to enumerate all connected quandles of order $n$ up to isomorphism. By Theorems \ref{Th:Canonical} and \ref{Th:CanIso}, it suffices to fix a set $Q$ of size $n$, an element $e\in Q$, and consider all quandle envelopes $(G,\zeta)$ on $Q$ (with respect to $e$), where the transitive groups $G$ are taken up to equivalence. The corresponding quandles $\mathcal Q(G,\zeta)$ then account for all connected quandles of order $n$ up to isomorphism, possibly with repetitions.

Moreover, since $\mathcal E(\mathcal Q(G,\zeta)) = (G,\zeta)$ by Theorem \ref{Th:Canonical}, we see that $G = \rmlt{\mathcal Q(G,\zeta)}$. Propositions \ref{Pr:DQ} and \ref{Pr:DQConnected} then imply that it suffices to consider transitive groups $G$ for which $G'$ is also transitive and $G/G'$ is cyclic. This disqualifies many transitive groups. The conditions $\zeta\in Z(G_e)$ and $\langle \zeta^G\rangle = G$ disqualify many other transitive groups, for instance the symmetric groups in their natural actions.

Corollary \ref{Cr:CanIso} can be used to avoid isomorphic copies. But it appears to be computationally easier to allow isomorphic copies and to filter them later with a direct isomorphism check, rather than verifying whether $\zeta$, $\xi$ are conjugate in $N_{(\sym{Q})_e}(G)$.

Here is the resulting algorithm for a given size $n$.

\begin{algorithm}\label{Alg:Enum}\ \\
\texttt{quandles $\leftarrow \emptyset$}\\
\texttt{for each $G$ in the set of transitive groups on $\{1,\dots,n\}$ up to equivalence do}\\
\texttt{\phantom{xxx}if $G'$ is transitive and $G/G'$ is cyclic then}\\
\texttt{\phantom{xxxxxx}qG $\leftarrow \emptyset$}\\
\texttt{\phantom{xxxxxx}for each $\zeta$ in $Z(G_1)$ such that $\gen{ \zeta^G }=G$ do}\\
\texttt{\phantom{xxxxxxxxx}qG $\leftarrow$ qG $\cup\ \{\mathcal Q(G,\zeta)\}$}\\
\texttt{\phantom{xxxxxx}qG $\leftarrow$ qG filtered up to isomorphism}\\
\texttt{\phantom{xxxxxx}quandles $\leftarrow$ quandles $\cup$ qG}\\
\texttt{return quandles}
\end{algorithm}

We have implemented the algorithm in \textsf{GAP} \cite{GAP4}. The source code and the output of the search are available on the website of the third author. The isomorphism check is based on the methods of the \textsf{LOOPS} \cite{LOOPS} package for \textsf{GAP}. The current version of \textsf{GAP} contains a library of transitive groups up to degree $30$. An extension up to degree $47$, except for degree $32$, can be obtained from one of the authors \cite{Hul}. The $2,801,324$ transitive groups of degree $32$ can be obtained from Derek Holt \cite{CanHol}. On an Intel Core i5-2520M 2.5GHz processor, the search for all connected quandles of order $1\le n\le 47$ with $n\neq32$ takes only several minutes, and the order $n=32$ takes about an hour.

In \cite{Ven}, Vendramin presented a similar algorithm, also based on a homogeneous representation, but he was not aware of Theorems \ref{Th:Canonical} and \ref{Th:CanIso}. He therefore had to deal with many more transitive groups, filter out quandles that were not connected, and also filter more quandles up to isomorphism, resulting in a much longer computation time (on the order of weeks).

\begin{table}[ht]
$$
\begin{array}{r|rrrrrrrrrrrrrrrr}
n &      1&2&3&4&5&6&7&8&9&10&11&12&13&14&15&16\\\hline
q(n)&    1&0&1&1&3&2&5&3&8& 1& 9&10&11& 0& 7& 9\\
\ell(n)& 1&0&1&1&3&0&5&2&8& 0& 9& 1&11& 0& 5& 9\\
a(n)&    1&0&1&1&3&0&5&2&8& 0& 9& 1&11& 0& 3& 9\\\\
n &      17&18&19&20&21&22&23&24&25&26&27&28&29&30&31&32\\\hline
q(n)&    15&12&17&10& 9& 0&21&42&34& 0&65&13&27&24&29&17\\
\ell(n)& 15& 0&17& 3& 7& 0&21& 2&34& 0&62& 7&27& 0&29& 8\\
a(n)&    15& 0&17& 3& 5& 0&21& 2&34& 0&30& 5&27& 0&29& 8\\\\
n &      33&34&35&36&37&38&39&40&41&42&43&44&45&46&47& \\\hline
q(n)&    11& 0&15&73&35& 0&13&33&39&26&41& 9&45& 0&45& \\
\ell(n)& 11& 0&15& 9&35& 0&13& 6&39& 0&41& 9&36& 0&45& \\
a(n)&     9& 0&15& 8&35& 0&11& 6&39& 0&41& 9&24& 0&45& \\
\end{array}
$$
\caption{The numbers $q(n)$ of connected quandles, $\ell(n)$ of latin quandles, and $a(n)$ of connected affine quandles of size $n\le 47$ up to isomorphism.}
\label{Tab:Enum}
\end{table}

Table \ref{Tab:Enum} shows the numbers $q(n)$ of connected quandles, $\ell(n)$ of latin quandles, and $a(n)$ of connected affine quandles of size $n\le 47$ up to isomorphism. Latin quandles are recognized by a direct test whether all left translations are permutations. Affine quandles are recognized by checking whether $G'$ is abelian, using Corollary \ref{Cr:Affine}. Note that Corollary \ref{Cr:AffineOnto} implies  $a(n)\leq \ell(n)\leq q(n)$. As we shall see, $q(p)=a(p)$ and $q(p^2)=a(p^2)$ for every prime $p$ (Theorems \ref{Th:p} and \ref{Th:p^2}), and $q(2p)=0$ for every prime $p>5$ (Theorem \ref{Th:2p}). Stein's theorem \cite[Theorem 9.9]{Ste} forces $\ell(4k+2)=0$.

The numbers $q(n)$ agree with those calculated by Vendramin in \cite{Ven}, and the numbers $a(n)$ agree with the enumeration results of Hou \cite{Hou}, as discussed at the end of Section~\ref{Sc:Affine}. 

\medskip

We conclude this section by providing examples of infinite sequences of connected quandles. The first source of examples is combinatorial, resulting from multi-transitivity of the symmetric and alternating groups.

\begin{example}\label{Ex:Small1}
For $n\ge 2$ let $G=S_n$ act on $2$-element subsets of $\{1,\dots,n\}$, let $e=\{1,2\}$ and $\zeta=(1,2)$. Then $\zeta\in Z(G_e)$ and $\gen{ \zeta^G } = G$, since all transpositions are conjugate to $\zeta$ in $S_n$. Thus $\mathcal Q(G,\zeta)$ is a connected quandle of order $\binom{n}{2}$.
\end{example}

\begin{example}\label{Ex:Small2}
For $n\ge 2$ let $G=S_n$ act on $n$-cycles by conjugation, let $e=(1,\dots,n)$ and $\zeta=(1,\dots,n)$. Since the orbit of $e$ consists of all $n$-cycles, we see that $|G_e|=n$ and $G_e=Z(G_e)=\gen{ \zeta }$, so certainly $\zeta\in Z(G_e)$. Furthermore, $\gen{\zeta^G}$ generates $S_n$ if $n$ is even (and $A_n$ if $n$ is odd). Therefore, if $n$ is even then $\mathcal Q(G,\zeta)$ is a connected quandle of order $(n-1)!$.
\end{example}

\begin{example}\label{Ex:Small3}
For $n\ge 3$ let $G=S_n$ act on $(n-2)$-tuples of distinct elements pointwise, let $e$ be the $(n-2)$-tuple $(1,\dots,n-2)$, and let $\zeta=(n-1,n)$. Then we obviously have $G_e=Z(G_e)=\gen{\zeta}$, so $\zeta\in Z(G_e)$, and $\gen{\zeta^G} = G$. Thus $\mathcal Q(G,\zeta)$ is a connected quandle of order $n!/2$.
\end{example}

\begin{example}
For $n\ge 4$ let $G=A_n$ act on $(n-3)$-tuples of distinct elements pointwise, let $e$ be the $(n-3)$-tuple $(1,\dots,n-3)$, and let $\zeta=(n-2,n-1,n)$. Since $|G_e|=6/2=3$ (because $G=A_n$, rather than $G=S_n$), we have $G_e=Z(G_e)=\gen{\zeta}$, so $\zeta\in Z(G_e)$. As $A_n$ is generated by $3$-cycles, we also have $\gen{\zeta^G} = G$. Thus $\mathcal Q(G,\zeta)$ is a connected quandle of order $n!/6$.
\end{example}

There are also geometric constructions, as illustrated by the following examples:

\begin{example}\label{Ex:SL}
For a prime power $q$, let $G=\SL_2(q)$ act (on the right) on $Q$, the set of all non-zero vectors in the plane $(\F_q)^2$. Let $e=(1,0)$. A quick calculation shows that $G_e=\{M_a\,:\,a\in\F_q\}$, where $M_a=\left(\begin{smallmatrix}1&0\\a&1\end{smallmatrix}\right)$. Let $\zeta = M_1$. Since $M_aM_b=M_{a+b}$, we have $G_e\simeq (\F_q,+)$, so $\zeta\in Z(G_e)=G_e$. We claim that $\gen{\zeta^G} = G$.

First, it is easy to check that $M_a$ is conjugate to $\zeta$ in $G$ if and only if $a$ is a square in $\F_q$. If $q$ is even then $\F_q^*$ has odd order and thus every element of $\F_q$ is a square, so $G_e\le\gen{\zeta^G}$. When $q=p^k$ is odd then $\F_q^*$ contains $|\F_q^*|/2=(q-1)/2$ squares, so $|G_e\cap\langle \zeta^G\rangle| \ge (q-1)/2 + 1 > q/3$, and Lagrange's Theorem then implies that $G_e\le\gen{\zeta^G}$ again.

Since $G_e\le\gen{\zeta^G}$, we establish $\gen{\zeta^G}=G$ by proving that $\gen{\zeta^G}$ acts transitively on $Q$. Given $(x,y)\in Q$ with $y\ne 0$, we have $(x,y) = eDM_{-y}D^{-1}$ with $D=\left(\begin{smallmatrix}0&1\\-1&d\end{smallmatrix}\right)$, $d=(1-x)y^{-1}$. In particular, $(0,1)\in e^{\gen{\zeta^G}}$, and given $(x,0)\in Q$, we obtain $(x,0) = (0,1)EM_xE^{-1}$ with $E=\left(\begin{smallmatrix}1&x^{-1}\\0&1\end{smallmatrix}\right)$.
Hence $\gen{\zeta^G}=G$, and thus $\mathcal Q(G,\zeta)$ is a connected quandle of order $q^2-1$.
\end{example}

\begin{example}\label{Ex:PSL}
For a prime power $q$, let $G=\PSL_3(q)$ act on $Q$, the set of all two-element subsets of the projective plane $\mathbb P^2(\F_q)$. This is a transitive action, because the natural action of $G$ on $\mathbb P^2(\F_q)$ is 2-transitive. Pick a two-element subset $e=\{e_1,e_2\}$ arbitrarily, and consider matrices with respect to the basis $(e_1,e_2,e_3)$, with an arbitrary completion by $e_3$. Clearly, $G_e=\{M_{a,b},N_{a,b}\,:\,a,b\in\F_q\}$, where $$M_{a,b}=\left(\begin{smallmatrix}1&0&0\\0&1&0\\a&b&1\end{smallmatrix}\right),\qquad N_{a,b}=\left(\begin{smallmatrix}0&1&0\\1&0&0\\a&b&-1\end{smallmatrix}\right).$$
A quick calculation shows that $\zeta=M_{a,-a}\in Z(G_e)$ for every $a\in\F_q$. Since $G$ is a simple group, we obtain for free that the normal subgroup $\gen{\zeta^G}$ is equal to $G$ (unless $a=0$). Thus $\mathcal Q(G,\zeta)$ is a connected quandle of order $|Q|=(q^2+q+1)(q^2+q)/2$.
\end{example}

\begin{example}\label{Ex:Small5}
The group $G$ of rotations of a Platonic solid (see~\cite[p.136]{coxeter73}) acts on faces. Let $e$ be a face.
\begin{itemize}
\item Tetrahedron: We have $G=A_4$ acting on $4$ points (faces), and with $\zeta$ a generator of $G_e\simeq\mathbb Z_3$ we get $\gen{ \zeta^G} = G$. Thus $\mathcal Q(G,\zeta)$ is a connected quandle of order $4$. Since $A_4$ is metabelian, Theorem \ref{Th:Affine} implies that $\mathcal Q(G,\zeta)$ is affine.
\item Cube: We have $G=S_4$ acting on $6$ points, and with $\zeta$ a generator of $G_e\simeq\mathbb Z_4$ we get $\gen{ \zeta^G} = G$. Thus $\mathcal Q(G,\zeta)$ is a connected quandle of order $6$.
\item Octahedron: We have $G=S_4$ acting on $8$ points, and $G_e\simeq\mathbb Z_3$. Since $3$-cycles do not generate $S_4$, no choice of $\zeta\in G_e$ yields a connected quandle $\mathcal Q(G,\zeta)$.
\item Dodecahedron:	We have $G=A_5$ acting on $12$ points, and with $\zeta$ a generator of $G_e\simeq\mathbb Z_5$ we get $\gen{ \zeta^G} = G$. Thus $\mathcal Q(G,\zeta)$ is a connected quandle of order $12$.
\item Icosahedron:	We obtain $G=A_5$ acting on $20$ points, and with $\zeta$ a generator of $G_e\simeq\mathbb Z_3$ we get $\gen{ \zeta^G} = G$. Thus $\mathcal Q(G,\zeta)$ is a connected quandle of order $20$.
\end{itemize}
\end{example}

There are algebraic constructions where the quandle envelope is not obvious. For example, the following construction of connected quandles of size $3n$, extending an affine quandle $\qaff{A,-1}$ by $\qaff{\Z_3,-1}$, presented by Clark et al. \cite{CEHSY}, inspired by Galkin \cite{Gal-small}.

\begin{example}\label{Ex:G}
Let $A$ be an abelian group and $u\in A$. We define $\mu$, $\tau:\Z_3\to A$ by $0^\mu=2$, $1^\mu=2^\mu=-1$ and $0^\tau=1^\tau=0$, $2^\tau=u$, and we define a binary operation on $\Z_3\times A$ by
\begin{displaymath}
    (x,a)\circ(y,b)=(-x-y,-a+(x-y)^\mu b+(x-y)^\tau).
\end{displaymath}
Then $\qgal{A,u}=(\Z_3\times A,\circ)$ is a connected quandle, called the \emph{Galkin quandle corresponding to the pointed group $(A,u)$}. It is affine iff $3A=0$. It is latin iff $|A|$ is odd. Two Galkin quandles are isomorphic iff the corresponding pointed groups are isomorphic. See \cite{CEHSY} for details.
\end{example}

\begin{table}[ht]
\begin{displaymath}
\begin{array}{rllll}
\text{size} & \rmlt Q & \text{construction} & \text{properties} \\\hline\hline
6 & S_4 & \text{\ref{Ex:Small1} or $\qgal{\Z_2,0}$}& \\
6 & S_4 & \text{\ref{Ex:Small2} or \ref{Ex:Small5} or $\qgal{\Z_2,1}$} & \\\hline
8 & \SL_2(3) & \text{\ref{Ex:SL}} & \\\hline
10 & S_5 & \text{\ref{Ex:Small1}} & \text{simple}\\\hline
12 & S_4 & \text{\ref{Ex:Small3}} & \\
12 & A_5 & \text{\ref{Ex:Small5}} & \text{simple}\\
12 & A_4\rtimes\Z_4 & \qhom{A_4,1,(1,2,3,4)}& \\
12 & (\Z_3^2\rtimes Q_8)\rtimes\Z_3 & & \\
12 & (\Z_4^2\rtimes\Z_3)\rtimes\Z_2 & \qgal{\Z_4,0} &\\
12 & (\Z_4^2\rtimes\Z_3)\rtimes\Z_2 & \qgal{\Z_4,1} &\\
12 & (\Z_4^2\rtimes\Z_3)\rtimes\Z_2 & \qgal{\Z_4,2} &\\
12 & (\Z_2^4\rtimes\Z_3)\rtimes\Z_2 & \qgal{\Z_2^2,(0,0)} &\\
12 & (\Z_2^4\rtimes\Z_3)\rtimes\Z_2 & \qgal{\Z_2^2,(1,1)}& \\\hline
15 & (\Z_5^2\rtimes\Z_3)\rtimes\Z_2 & \qgal{\Z_5,0} & \text{latin}\\
15 & (\Z_5^2\rtimes\Z_3)\rtimes\Z_2 & \qgal{\Z_5,1}& \text{latin}\\
15 & S_6 & \text{\ref{Ex:Small1}} & \text{simple}\\
15 & \SL_2(4) & \text{\ref{Ex:SL}} & \text{simple}\\\hline
\vdots & & & \\\hline
21 & (\Z_7^2\rtimes\Z_3)\rtimes\Z_2 & \qgal{\Z_7,0} &\text{latin}\\
21 & (\Z_7^2\rtimes\Z_3)\rtimes\Z_2 & \qgal{\Z_7,1} &\text{latin}\\
21 & S_7 & \text{\ref{Ex:Small1}} & \text{simple}\\
21 & \PSL_3(2) & \text{\ref{Ex:PSL}} & \text{simple}\\\hline
\vdots & & & \\\hline
33 & (\Z_{11}^2\rtimes\Z_3)\rtimes\Z_2 & \qgal{\Z_{11},0} &\text{latin}\\
33 & (\Z_{11}^2\rtimes\Z_3)\rtimes\Z_2 & \qgal{\Z_{11},1} & \text{latin}\\
\end{array}
\end{displaymath}
\caption{All connected non-affine quandles of certain orders.}\label{Tab:list}
\end{table}

Table \ref{Tab:list} lists all connected non-affine quandles of orders $n\le 15$ and $n\in\{21,33\}$. In the column labeled ``construction'' we either give a reference to a numbered example which uniquely determines the quandle, or we specify how the quandle can be constructed as $\qhom{G,H,f}$ of Construction \ref{Co:Homogeneous}, or we specify how the quandle can be constructed as $\qgal{A,u}$ of Example \ref{Ex:G}.

\begin{problem}
Let $p\ge 11$ be a prime. Is it true that the only non-affine connected quandles of order $3p$ are the Galkin quandles $\qgal{\Z_p,0}$ and $\qgal{\Z_p,1}$?
\end{problem}

\section{Connected quandles of order $p$, $p^2$ and $2p$}\label{Sc:P}

First, we will show that connected quandles of prime power order have a solvable right multiplication group, using a deep result on conjugacy classes of prime power size by Kazarin \cite{Kaz}. Based on that, we give two new, conceptually simple proofs that connected quandles of prime order are affine: the first argument uses an observation about $\rmlt{Q}$ of simple quandles, the second one requires Galois' result on solvable primitive groups. The orginal proof of Etingof, Soloviev and Guralnick \cite{ESG} relies on a group-theoretical result equivalent to the one of Kazarin, too.

Then we mention the result of Gra\~na \cite{Gra} that connected quandles of prime square order are affine, and conclude with a new, shorter and purely group-theoretical proof (modulo Theorem \ref{Th:Canonical}) of the recent result of McCarron \cite{McC} that there are no connected quandles of order $2p$ with $p>5$ prime.

\begin{lemma}[{\cite[Lemma 1.29]{AG}}]\label{Pr:Equivalence}
Let $Q$ be a connected rack. For $a$, $b\in Q$ let $a\sim b$ iff $R_a=R_b$. Then $\sim$ is an equivalence relation on $Q$, and all equivalence classes of $\sim$ have the same size.
\end{lemma}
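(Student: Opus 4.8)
The plan is to dispose first of the trivial part --- that $\sim$ is an equivalence relation --- and then to establish the equality of class sizes by showing that automorphisms of $Q$ interact well with $\sim$, combined with connectedness. Reflexivity, symmetry and transitivity of $\sim$ are immediate, since $\sim$ is by construction the kernel of the map $a \mapsto R_a$ (i.e.\ $a \sim b$ iff the images of $a$ and $b$ under this map coincide). So the real content is the statement about the sizes of the equivalence classes.

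The key observation is that every $\alpha \in \aut{Q}$ respects $\sim$. Indeed, by \eqref{Eq:RConj} we have $R_a^{\,\alpha} = R_{a^\alpha}$ for all $a \in Q$, so the condition $a \sim b$, i.e.\ $R_a = R_b$, is equivalent to $R_{a^\alpha} = R_a^{\,\alpha} = R_b^{\,\alpha} = R_{b^\alpha}$, i.e.\ to $a^\alpha \sim b^\alpha$. Applying this both to $\alpha$ and to $\alpha^{-1}$, we conclude that $\alpha$, being a bijection of $Q$, permutes the $\sim$-classes, and that its restriction to any class $[a]_\sim$ is a bijection \emph{onto} the class $[a^\alpha]_\sim$.

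Now I would invoke connectedness. Since $Q$ is connected, $\rmlt{Q}$ acts transitively on $Q$, and $\rmlt{Q} \le \aut{Q}$; hence for any $a, b \in Q$ there is $\alpha \in \rmlt{Q}$ with $a^\alpha = b$. By the previous paragraph the restriction of $\alpha$ is a bijection $[a]_\sim \to [b]_\sim$, so $|[a]_\sim| = |[b]_\sim|$. As $a$ and $b$ were arbitrary, all $\sim$-classes have the same cardinality.

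There is no genuine obstacle in this argument; the only point requiring a moment's care is to verify that $\alpha$ maps $[a]_\sim$ \emph{onto} $[b]_\sim$ rather than merely into it, which is handled by noting that the displayed equivalence $a' \sim a'' \Leftrightarrow a'^\alpha \sim a''^\alpha$ applies to $\alpha^{-1}$ as well.
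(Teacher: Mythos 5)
Your proposal is correct and follows essentially the same route as the paper: the core step is the identity $R_{a^\alpha}=R_a^{\,\alpha}$ from \eqref{Eq:RConj}, which shows that elements of $\rmlt{Q}$ carry $\sim$-classes to $\sim$-classes, and transitivity of $\rmlt{Q}$ then matches up the classes. The paper phrases this as two inclusions (using $\theta$ and $\theta^{-1}$ to get $|[a]|\le|[c]|$ and the reverse), while you phrase it as a single bijection, but the argument is the same.
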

\begin{proof}
It is clear that $\sim$ is an equivalence relation. Let $[a]$, $[c]$ be two equivalence classes of $\sim$. Since $Q$ is connected, there is $\theta\in\rmlt{Q}$ such that $a^\theta=c$. If $a\sim b$ then $R_c = R_{a^\theta} = R_a^{\,\theta} = R_b^{\,\theta} = R_{b^\theta}$, thus $c\sim b^\theta$, showing that $[a]^\theta\subseteq [c]$. Since $\theta$ is one-to-one, we deduce $|[a]|\le |[c]|$. The mapping $\theta^{-1}\in\rmlt{Q}$ gives the other inequality.
\end{proof}

\begin{proposition}\label{Pr:Solvable}
Let $Q$ be a connected quandle of prime power order. Then $\rmlt Q$ is a solvable group.
\end{proposition}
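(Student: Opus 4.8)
The plan is to produce a single element of $G=\rmlt Q$ whose conjugacy class has prime power size, and then to quote Kazarin's theorem, which says that the normal closure of such an element is solvable. Nothing more clever is needed.

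First I would fix $e\in Q$ and put $G=\rmlt Q$. By Lemma \ref{Lm:E} (equivalently, by Theorem \ref{Th:Canonical}) the pair $(G,R_e)$ is a quandle envelope, so $R_e\in Z(G_e)$ and $\gen{R_e^{\,G}}=G$. Since $R_e$ centralizes the point stabilizer $G_e$, we have $G_e\le C_G(R_e)$, and hence the class size $|R_e^{\,G}|=[G:C_G(R_e)]$ divides $[G:G_e]=|Q|$. By hypothesis $|Q|$ is a prime power, so $|R_e^{\,G}|$ is a prime power as well.

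Now I would invoke Kazarin's theorem \cite{Kaz}: if an element $x$ of a finite group $H$ has a conjugacy class of prime power size, then $\gen{x^H}$ is solvable. (The case $|R_e^{\,G}|=1$ is trivial, since then $G=\gen{R_e^{\,G}}=\gen{R_e}$ is cyclic.) Applying this with $H=G$ and $x=R_e$, and using $\gen{R_e^{\,G}}=G$, we conclude that $\rmlt Q=G$ is solvable, as desired.

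The argument is essentially immediate once Theorem \ref{Th:Canonical} is available: the only step requiring any effort is observing that $R_e\in Z(G_e)$ forces $|R_e^{\,G}|$ to divide $|Q|$, and the only deep input is Kazarin's (classification-free but highly nontrivial) result on conjugacy classes of prime power size, which does all the real work. Thus there is no genuine obstacle in the proof itself; the content of the proposition lies in the clean reduction via the canonical correspondence rather than in any difficulty of execution.
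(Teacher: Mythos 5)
Your proof is correct and takes essentially the same route as the paper: both arguments reduce the statement to showing that the conjugacy class $R_e^{\,G}$ has prime power size and then invoke Kazarin's theorem together with $\gen{R_e^{\,G}}=G$. The only minor difference is that the paper identifies $R_e^{\,G}$ with $\{R_x\,:\,x\in Q\}$ and uses Lemma \ref{Pr:Equivalence} to see that its size divides $|Q|$, whereas you obtain the same divisibility more directly from $G_e\le C_G(R_e)$ (a consequence of $R_e\in Z(G_e)$); both are valid.
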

\begin{proof}
Kazarin proved in \cite{Kaz} that in a group $G$, if $x\in G$ is such that $|x^G|$ is a prime power, then the subgroup $\gen{ x^G}$ is solvable.

Let $Q$ be a connected quandle of prime power order, let $G=\rmlt{Q}$ and $\zeta=R_e$ for any $e\in Q$. Note that $\zeta^G = \{R_x\,:\,x\in Q\}$. By Lemma \ref{Pr:Equivalence}, $|\zeta^G|$ is a divisor of $|Q|$, hence a prime power. Kazarin's result then implies that $\gen{\zeta^G}=G$ is solvable.
\end{proof}

Recall that a quandle $Q$ is \emph{simple} if all its congruences are trivial.

\begin{theorem}[\cite{ESG}]\label{Th:p}
Every connected quandle of prime order is affine.
\end{theorem}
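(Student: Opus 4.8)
The plan is to use the canonical correspondence of Theorem~\ref{Th:Canonical} together with Proposition~\ref{Pr:Solvable} and the characterization of affine quandles in Corollary~\ref{Cr:Affine}, which says that a quandle envelope $(G,\zeta)$ yields an affine quandle precisely when $G$ is metabelian. So let $Q$ be a connected quandle of prime order $p$, fix $e\in Q$, and set $G=\rmlt Q$ and $\zeta=R_e$; by Lemma~\ref{Lm:E}, $(G,\zeta)$ is a quandle envelope on a $p$-element set. By Proposition~\ref{Pr:Solvable}, $G$ is solvable, and since $G$ acts transitively on a set of prime size $p$, it is in fact a transitive \emph{primitive} group (every transitive group of prime degree is primitive, as the only divisors of $p$ are $1$ and $p$, so there are no nontrivial blocks). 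The goal is therefore to show that a solvable primitive group $G$ of prime degree $p$ that additionally arises as an $\rmlt Q$ (so that $\gen{\zeta^G}=G$ with $\zeta\in Z(G_e)$) must be metabelian.

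The key structural input is Galois' classical theorem on solvable primitive groups of prime degree: such a group $G$ embeds into the one-dimensional affine group $\aff{\Z_p}=\Z_p\rtimes\Z_p^*$, with the socle $\Soc(G)=\Z_p$ acting regularly by translations, and $G/\Z_p$ cyclic (a subgroup of $\Z_p^*$). In particular any such $G$ is metabelian — indeed $G'\le\Z_p$ is abelian. Combining this with Corollary~\ref{Cr:Affine} immediately gives that $\mathcal Q(G,\zeta)=Q$ is affine. So the body of the proof is: (1) invoke Theorem~\ref{Th:Canonical} and Lemma~\ref{Lm:E} to pass to the quandle envelope $(G,\zeta)$; (2) invoke Proposition~\ref{Pr:Solvable} for solvability of $G$; (3) observe primitivity is automatic in prime degree; (4) apply Galois' theorem to conclude $G\le\aff{\Z_p}$, hence $G$ metabelian; (5) apply Corollary~\ref{Cr:Affine} to conclude $Q$ is affine.

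The main obstacle — and the only place where something genuinely needs to be checked rather than quoted — is step~(4)'s passage to metabelianness, and more precisely whether one wants a self-contained argument or is content to cite Galois. If one wants to avoid citing the full classification, the alternative route sketched in the paper's introduction is to argue directly that $\rmlt Q$ of a \emph{simple} connected quandle has a restricted structure and then reduce to that case; but here the cleaner path is simply to note that solvability (from Kazarin via Proposition~\ref{Pr:Solvable}) plus primitivity of prime degree forces, by Galois, a unique minimal normal subgroup $N\cong\Z_p$ which is self-centralizing, so $G/N\hookrightarrow\aut{N}=\Z_p^*$ is cyclic and $G$ is metabelian. A minor subtlety to address along the way: one should confirm the envelope conditions ($\gen{\zeta^G}=G$, $\zeta\in Z(G_e)$) are not needed for the metabelian conclusion — they are automatic from $(G,\zeta)=\mathcal E(Q)$ — so the argument really only consumes solvability and the degree being prime.

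\medskip

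Actually, once step~(4) is in hand, one gets more: reading off the affine data, $G=\Z_p\rtimes\langle f\rangle\le\aff{\Z_p}$, so by Corollary~\ref{Cr:Affine}(iv) there is an abelian group $A=\Z_p$ with $\mlt A\le G\le\aff A$, and then Theorem~\ref{Th:Affine}(iv)$\Rightarrow$(i) exhibits $Q\cong\qaff{\Z_p,f}$ explicitly with $f=R_0$. This also recovers the enumeration $q(p)=p-2$ noted earlier, since connectedness forces $1-f$ invertible, i.e.\ $f\ne 1$, and conjugacy in $\aut{\Z_p}$ is trivial.
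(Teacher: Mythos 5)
Your argument is correct and coincides with the paper's own ``Proof 2'': solvability of $\rmlt Q$ via Proposition~\ref{Pr:Solvable}, automatic primitivity in prime degree, Galois' theorem to embed $G$ in $\aff{\Z_p}$ (hence $G'$ abelian), and then Theorem~\ref{Th:Affine}/Corollary~\ref{Cr:Affine}. The paper also records an alternative first proof via simple quandles and Joyce's observation that $G'$ is the smallest nontrivial normal subgroup, which you correctly identify as the other route.
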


\begin{proof}
Let $Q$ be the quandle in question. By Proposition \ref{Pr:Solvable}, $G=\rmlt{Q}$ is solvable. Moreover, since $G$ acts transitively on a set of prime size, it must act primitively.

\emph{Proof 1.} Consequently, the quandle $Q$ is simple, because every congruence of $Q$ is invariant under the action of $G$. An observation by Joyce \cite[Proposition 3]{Joy-simple} says that if $Q$ is simple then $G'$ is the smallest nontrivial normal subgroup in $G$. Since $G$ is solvable, we then must have $G''=1$, hence $G'$ is abelian, and so $Q$ is affine by Theorem \ref{Th:Affine}.

\emph{Proof 2.} A theorem of Galois says that a solvable primitive group acts as a subgroup of the affine group over a finite field. Theorem \ref{Th:Affine} now concludes the proof.
\end{proof}

An analogous statement holds for prime square orders, but the reason seems to be more complicated. Gra\~na's proof relies on an examination of several cases of the right multiplication group of a potential counterexample.

\begin{theorem}[\cite{Gra}]\label{Th:p^2}
Every connected quandle of prime square order is affine.
\end{theorem}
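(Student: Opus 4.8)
The plan is to prove that $Q$ is affine by showing that the right multiplication group $G=\rmlt Q$ is metabelian; by Theorem \ref{Th:Affine} this is exactly the affineness of $Q$. By Proposition \ref{Pr:Solvable}, $G$ is a \emph{solvable} transitive group of degree $p^2$; by Lemma \ref{Lm:E} the pair $(G,R_e)$ is a quandle envelope, so $R_e\in Z(G_e)$ and $\gen{R_e^{\,G}}=G$; and by Propositions \ref{Pr:DQ}(ii) and \ref{Pr:DQConnected}, $\dis Q=G'$ is transitive on $Q$ while $G/G'$ is cyclic. The first reduction I would make is that \emph{it suffices to exhibit an abelian normal subgroup $A\unlhd G$ acting regularly on $Q$}: identifying $Q$ with $A$ via the regular action so that $A$ is the group of translations, we get $\mlt A\le G\le N_{\sym Q}(A)=\aff A$, and since $(G,R_e)$ is an envelope, condition (iv) of Corollary \ref{Cr:Affine} then forces $Q=\mathcal Q(G,R_e)$ to be affine.

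To produce $A$, let $N$ be a minimal normal subgroup of the solvable group $G$, so $N$ is elementary abelian. Since $N\unlhd G$ and $G$ is transitive, all $N$-orbits have a common size $d\mid p^2$, with $d\ne 1$ because a nontrivial normal subgroup of a transitive group has no fixed point. If $d=p^2$ then $N$ is transitive and abelian, hence regular of order $p^2$, and $A:=N$ works. So assume $d=p$: then $N$ is an elementary abelian $p$-group whose orbit partition $\mathcal B$ is a block system with $p$ blocks of size $p$. As $N\le\rmlt Q$ is normal, $\mathcal B$ is the class partition of a proper nontrivial quandle congruence of $Q$ (a short computation with \eqref{Eq:RConj} and normality shows that the orbit partition of any normal subgroup of $\rmlt Q$ is a quandle congruence). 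Hence $Q/\mathcal B$ is a connected quandle of order $p$, affine by Theorem \ref{Th:p}, so writing $K$ for the kernel of the action of $G$ on $\mathcal B$ (note $N\le K$), the induced group $\bar G=G/K$ satisfies $\mlt{\Z_p}\le\bar G\le\aff{\Z_p}$ and is in particular metabelian.

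It remains to promote this to a regular abelian normal subgroup of $G$. The subgroup to analyze is $K$: it is normal in $G$ and nontrivial (it contains $N$), so its orbits lie inside blocks and have common size $1$ or $p$; size $1$ would force $K=1$, so $K$ is transitive on every block and restricts to each block as a subgroup of $\aff{\Z_p}$. Put $P=O_p(K)$, characteristic in $K$, hence normal in $G$. Its restriction to a block is a normal $p$-subgroup of a subgroup of $\aff{\Z_p}\cong\Z_p\rtimes\Z_{p-1}$, hence lies in the translation subgroup $\Z_p$, and $N\le P$ shows this restriction is \emph{all} of $\Z_p$; so $P$ is transitive on every block. Since $P$ stabilizes every block and embeds into the product of its (cyclic) block restrictions, $P$ is an elementary abelian normal $p$-subgroup of $G$ with orbit set $\mathcal B$. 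I would then use the remaining envelope data --- transitivity of $\dis Q=G'$ on $Q$, cyclicity of $G/G'$, together with $R_e\in Z(G_e)$ and $\gen{R_e^{\,G}}=G$ --- to determine $P$ and $G$; these hypotheses are very restrictive for a transitive group of degree $p^2$ (for instance they rule out $G=\Z_p\wr\Z_p$, which is imprimitive but has non-cyclic abelianization) and should force $G$ to contain a regular abelian normal subgroup.

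The main obstacle is exactly this imprimitive step. The reduction in the first paragraph and the transitive case $d=p^2$ are quick; but when $Q$ is not simple the block kernel $K$ can a priori be a large, non-abelian, $p$-by-cyclic group lying over an affine quotient of order $p$, and one must use the transitivity of $\dis Q$ and the cyclicity of $\rmlt Q/\dis Q$ in an essential way --- over arbitrary transitive solvable groups of degree $p^2$ the conclusion is false, so these hypotheses cannot be dispensed with.
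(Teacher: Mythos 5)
Your proposal has a genuine gap, and it sits exactly where you say it does. The paper itself does not prove Theorem \ref{Th:p^2}; it cites Gra\~na and explicitly warns that ``the reason seems to be more complicated'' and that Gra\~na's argument ``relies on an examination of several cases of the right multiplication group of a potential counterexample.'' Those cases are precisely the imprimitive situation you reach at the end, and your proposal stops there with the assertion that the envelope data ``should force $G$ to contain a regular abelian normal subgroup.'' That sentence is a conjecture, not a step: nothing in the proposal rules out, say, a block kernel $K$ whose $p$-core $P=O_p(K)$ is a large submodule of $\F_p^{\,p}$ on which no regular abelian normal subgroup can be carved out without further argument. You would need to actually pin down $P$ as a $\bar G$-module, exploit $R_e\in Z(G_e)$ and $\gen{R_e^{\,G}}=G$ (e.g.\ via Lemma \ref{Pr:Equivalence}, which bounds $|R_e^{\,G}|$ by a divisor of $p^2$), and run the case analysis to completion. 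Until that is done, the theorem is not proved.

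The parts you do carry out are sound and are a reasonable skeleton: the reduction of affineness to the existence of a regular abelian normal subgroup of $G=\rmlt Q$ (via $N_{\sym{Q}}(A)=\aff{A}$ for a regular abelian $A$ and Theorem \ref{Th:Affine}(iv)) is correct; solvability from Proposition \ref{Pr:Solvable} is correctly invoked; the dichotomy on the orbit size of a minimal normal subgroup is fine, and the transitive case ($d=p^2$) is immediate. The analysis of $K$ and $P=O_p(K)$ is also correct as far as it goes ($P$ elementary abelian, normal in $G$, with orbit set $\mathcal B$). But the theorem's content is concentrated in the step you have deferred, so as written this is an outline with the hard case missing rather than a proof.
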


We now turn our attention to order $2p$. For every integer $n\ge 2$, Example \ref{Ex:Small1} yields a connected quandle of order $\binom{n}{2}$. With $n=4$ and $n=5$ we obtain connected quandles of order $6=2\cdot 3$ and $10=2\cdot 5$, respectively. These examples are sporadic in the sense that $\binom{n}{2}$ is equal to $2p$ for a prime $p$ if and only if $n\in\{4,5\}$.

\begin{theorem}[\cite{McC}]\label{Th:2p}
There is no connected quandle of order $2p$ for a prime $p>5$.
\end{theorem}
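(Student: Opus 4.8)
The plan is to use the canonical correspondence (Theorem \ref{Th:Canonical}) to reduce the nonexistence of connected quandles of order $2p$ to a purely group-theoretic statement about transitive groups of degree $2p$. Suppose for contradiction that $Q$ is a connected quandle of order $2p$ with $p>5$ prime. Then $G=\rmlt Q$ is a transitive group of degree $2p$, and by Lemma \ref{Lm:E} there is $\zeta = R_e \in Z(G_e)$ with $\gen{\zeta^G}=G$; moreover $\zeta \ne 1$ since a connected quandle of order $>1$ is not the left projection (Proposition \ref{Pr:DQ2}(i)). This is exactly the configuration forbidden by Theorem \ref{thmperm}: a transitive group of degree $2p$, $p>5$, cannot contain a nontrivial element $\zeta$ lying in the center of a point stabilizer whose normal closure is the whole group. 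So the entire burden is carried by Theorem \ref{thmperm}, and the proof of Theorem \ref{Th:2p} itself is a one-line deduction.

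For Theorem \ref{thmperm} (which I must actually prove), I would argue via the classification of transitive groups of degree $2p$. The first step is to invoke the structure theory: a transitive group $G$ of degree $2p$ is either imprimitive, or primitive. If $G$ is primitive of degree $2p$, then by the O'Nan--Scott theorem combined with the classification of finite simple groups — or, more classically, by results of Wielandt and others on primitive groups of degree $2p$ — the socle is severely restricted (essentially $A_{2p}$, or a group acting on pairs, or $\mathrm{PSL}_2(q)$-type actions for small $q$), and one checks case by case that the center of a point stabilizer is trivial or has trivial normal closure; the condition $p>5$ is what eliminates the sporadic small-degree coincidences. If $G$ is imprimitive, it preserves a partition into either $p$ blocks of size $2$ or $2$ blocks of size $p$. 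In the first case $G$ embeds into $\mathbb Z_2 \wr S_p$ acting on $p$ blocks; in the second into $S_p \wr \mathbb Z_2$. One then analyzes the point stabilizer $G_e$, identifies $Z(G_e)$, and shows that for any $\zeta \in Z(G_e) \setminus \{1\}$ the normal closure $\gen{\zeta^G}$ is a proper subgroup — typically because $\zeta$ must act as a relatively ``local'' permutation (fixing points in other blocks, or acting within a single block system) and its conjugates cannot generate enough to be transitive or cannot escape a natural normal subgroup like the base group of the wreath product.

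The key combinatorial fact I expect to use repeatedly is Lemma \ref{Pr:Equivalence}: the conjugacy class $\zeta^G = \{R_x : x \in Q\}$ has size dividing $2p$, so $|\zeta^G| \in \{2, p, 2p\}$ (it cannot be $1$). This drastically constrains the cycle structure of $\zeta$ and the index of its centralizer, and in each structural case it forces $\zeta$ into a very specific form — for instance, if $|\zeta^G| = 2$ then $C_G(\zeta)$ has index $2$, which is impossible to reconcile with $\gen{\zeta^G} = G$ being transitive of degree $2p$ unless $G$ itself is tiny. Combining the divisibility constraint with the block structure should knock out the imprimitive cases fairly mechanically, leaving the primitive case as the substantive one.

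The main obstacle is the primitive case of Theorem \ref{thmperm}: pinning down all primitive groups of degree $2p$ and, for each, verifying that no central element of a stabilizer has full normal closure. This is where one genuinely needs either a careful hands-on analysis (in the spirit of classical work on primitive groups of degree twice a prime) or an appeal to CFSG-based classifications; the bound $p>5$ is precisely the threshold below which exceptional examples (e.g. the order-$6$ and order-$10$ quandles built from $S_4$ and $S_5$ acting on pairs, as in Example \ref{Ex:Small1}) survive, so the proof must locate and use exactly the arithmetic or group-theoretic feature of $p>5$ that destroys those exceptions. Handling the pair-action $S_{2p}$-type and $\mathrm{PSL}$-type primitive groups uniformly, rather than as an ever-growing list of special cases, will be the delicate part.
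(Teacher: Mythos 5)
Your reduction is exactly the paper's: pass to $G=\rmlt Q$, use Lemma \ref{Lm:E}/Theorem \ref{Th:Canonical} to get $\zeta=R_e\in Z(G_e)$ with $\gen{\zeta^G}=G$, and blame everything on a nonexistence theorem for transitive groups of degree $2p$. But you have misstated that theorem. The paper's Theorem \ref{thmperm} forbids the \emph{conjunction} of (A) ``$G'$ is transitive'' and (B) ``$\gen{Z(G_1)^G}=G$''; you quote only (B). Hypothesis (A) is available to you for free (Propositions \ref{Pr:DQ}(iv) and \ref{Pr:DQConnected} give that $G'=\dis Q$ is transitive), and the paper's proof leans on it at two essential points in the imprimitive case with $p$ blocks of size $2$ and affine block image $H\le\F_p\rtimes\F_p^*$: to exclude cyclic $H$ (else $G'\le\ker\varphi$ is intransitive) and, in Lemma \ref{lsize}, to force $|S\cap M|>2$ via a Sylow count. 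Your stronger, (A)-free statement may or may not be true, but your sketch gives no route to it; you should restore (A).

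The deeper gap is that the two genuinely hard ingredients are replaced by hopeful prose. First, for primitive $G$ of degree $2p$ (and, in the imprimitive analysis, for subgroups of index $\le 2$ in point stabilizers of almost simple primitive groups of degree $p$), one must actually prove $Z(G_1)=\gen1$; in the paper this is Theorems \ref{Th:primitive_p} and \ref{Th:primitive_2p}, whose $\PSL_d(q)$ cases require the explicit stabilizer computation of Lemma \ref{pslstacen} in $\mathrm{P\Gamma L}_d(q)$ (the transvection part, the $\SL_{d-1}(q)$-module argument, and the field automorphisms), plus machine checks for $M_{11}$, $M_{22}$, $M_{23}$ and $\PSL_2(11)$. ``One checks case by case that the center of a point stabilizer is trivial'' is the theorem, not a proof. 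Second, in the case of $p$ blocks of size $2$ with affine image, your heuristic that $\zeta$ ``cannot escape a natural normal subgroup like the base group of the wreath product'' is precisely what must be proved: a priori $Z(G_e)$ can contain elements mapping onto nontrivial elements of $\F_p^*$, and the paper rules this out by identifying elements of $M\le C_2^p$ with their supports in $\F_p$ and invoking the finite-field Lemma \ref{closedlemma} on $\beta$-closed sets to show $C_S(L)\le L\le M$, whence $\gen{Z(G_1)^G}\le M\neq G$. Nothing in your outline supplies this step. (Your divisibility observation $|\zeta^G|\in\{2,p,2p\}$ from Lemma \ref{Pr:Equivalence} is a legitimate extra constraint the paper does not use here, but you have not shown it can substitute for either of the missing arguments.)
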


We conclude the paper with a new proof of Theorem \ref{Th:2p}.
Suppose that $Q$ is a connected quandle of order $2p$. Then $G=\rmlt{Q}\le\sym{2p}$, $G'$ acts transitively on $Q$ by Proposition \ref{Pr:DQConnected}, and $\gen{ \zeta^G} = G$ for some $\zeta\in Z(G_e)$ by Theorem \ref{Th:Canonical}, so, in particular, $\gen{ Z(G_e)^G} =G$. Theorem \ref{Th:2p} therefore follows from the group-theoretical Theorem \ref{thmperm} below that we prove separately.

\section{A result on transitive groups of degree $2p$}

\begin{theorem}\label{thmperm}
Let $p>5$ be a prime. There is no transitive group $G\le S_{2p}$ satisfying both of the following conditions:
\begin{enumerate}
\item[(A)] $G'$ is transitive on $\{1,\ldots,2p\}$.
\item[(B)] $\gen{Z(G_1)^G}=G$.
\end{enumerate}
\end{theorem}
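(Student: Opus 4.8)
The plan is to exploit the classification of transitive groups of degree $2p$, which goes back to Galois, Burnside and Wielandt: any such group $G$ is either \emph{imprimitive} (since $2p$ is not prime, blocks of size $2$ or $p$ may exist) or \emph{primitive}, and primitive groups of degree $2p$ are severely restricted. First I would dispose of the primitive case. A primitive group of degree $2p$ with $p>5$ is, by the classification (relying on CFSG, or for the relevant cases on Wielandt's theorem on primitive groups of degree $2p$), one of a short list: essentially $S_{2p}$, $A_{2p}$, or groups with socle $\PSL_2(q)$ acting on cosets, $\mathrm{P\Gamma L}_2(q)$-type actions, or a few sporadic cases. For $G = S_{2p}$ or $A_{2p}$ the stabilizer $G_1$ is $S_{2p-1}$ or $A_{2p-1}$, which has trivial center for $p>3$, so (B) fails immediately. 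For the remaining primitive almost-simple possibilities one checks that either $Z(G_1)=1$ or that $\gen{Z(G_1)^G}$ is too small (it cannot be all of $G$ when $G$ is almost simple but not simple, and when $G$ is simple one must verify $Z(G_1)=1$ case by case using the known point-stabilizers). So the primitive case is a finite check against the list.

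The heart of the proof is the imprimitive case. Here $G$ preserves a block system; the two possibilities are a system $\mathcal{B}$ of $p$ blocks of size $2$, or a system $\mathcal{C}$ of $2$ blocks of size $p$. I would handle the system of $2$ blocks of size $p$ first, as it is the more delicate one. Let $N = G_{\{B_1\}} = G_{\{B_2\}}$ be the index-$2$ subgroup fixing each of the two blocks setwise; condition (A) forces $G' \le N$ to be transitive, hence $G'$ acts transitively within each block of size $p$, which constrains the induced action $\bar G'$ on one block to be a transitive group of prime degree $p$. The key tension to exploit is between (A) and (B): (A) says $G/G'$ is a $2$-group (indeed $|G:G'|=2$ since $G'$ is transitive on a $2p$-set it cannot have index more than $2$ after we also know $G/G'$ cyclic from Proposition~\ref{Pr:DQ}, but more carefully $G'$ transitive and $G \le S_{2p}$ with the two-block system gives $|G/G'|\mid 2$), while (B) says $G$ is generated by the $G$-conjugates of central elements of a point stabilizer. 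Since $G_1$ fixes a point inside one block $B_1$, $G_1$ lies in $N$ and its induced action on $B_1$ is a point stabilizer in the prime-degree group $\bar G'$ (or a slight enlargement). For prime-degree transitive groups the point stabilizer, if solvable, is contained in the affine normalizer and has a small abelian part; if non-solvable the group is $2$-transitive and the stabilizer has trivial center. Either way $Z(G_1)$ is small, and one shows its normal closure is forced to lie inside $N$ (because central elements of $G_1$, acting on $B_1$, generate only a normal-in-$\bar G'$ subgroup, and swapping the two blocks is not reachable), contradicting (B).

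For the block system of $p$ blocks of size $2$, I would argue similarly but now via the kernel $K$ of the action of $G$ on the $p$ blocks: $G/K$ embeds in $S_p$ as a transitive group of prime degree, and $K$ is an elementary-abelian-by-nothing subgroup of $(\Z_2)^p$ (it acts on each block of size $2$). Condition (A) says $G'$ is transitive on $2p$ points, so $G'$ surjects onto a transitive subgroup of $S_p$ and meets enough of $K$. Condition (B): the stabilizer $G_1$ fixes a point, hence fixes its block, hence maps into the stabilizer of a block in $G/K$; a central element $\zeta$ of $G_1$ must, in particular, centralize all of $K \cap G_1$ and its image must be central in the point-stabilizer of the prime-degree group $G/K$. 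Pushing the normal-closure computation through, one again finds $\gen{Z(G_1)^G}$ trapped inside a proper normal subgroup (either inside $K\cdot(\text{something})$ or failing to project onto the whole prime-degree quotient), unless $p \le 5$, where the exceptional small transitive groups of degree $p$ (with larger centralizers in their point stabilizers, e.g.\ degree $5$ with the Frobenius group $\Z_5\rtimes\Z_4$, or degree $3$) allow the construction to succeed — matching the known quandles of order $6$ and $10$.

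The main obstacle I anticipate is the bookkeeping in the imprimitive $2$-blocks-of-size-$p$ case: one must carefully track how $Z(G_1)$ sits inside $N$, rule out that conjugation can drag central elements across the block swap, and handle the sub-case where $\bar G'$ on a block of size $p$ is non-solvable (hence $2$-transitive) versus solvable (affine). A clean way to organize this is to observe that in all these cases there is a proper normal subgroup $M \unlhd G$ with $G/M$ a nontrivial $2$-group, such that every element of $Z(G_1)$ lies in $M$ — then $\gen{Z(G_1)^G} \le M < G$ kills (B). Establishing the existence of such an $M$, uniformly across the primitive and both imprimitive cases (with the $p\le 5$ exceptions transparently explaining why the bound is sharp), is the crux of the argument.
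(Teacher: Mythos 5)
Your overall skeleton matches the paper's: split into the primitive case (dispatched by the classification of primitive groups of degree $2p$, which forces $Z(G_1)=\gen{1}$ for $p>5$), the case of two blocks of size $p$, and the case of $p$ blocks of size $2$ with the quotient $H=G/\ker\varphi\le S_p$ primitive of prime degree, hence affine or almost simple. Two remarks on the block-of-size-$p$ case: you call it ``the heart of the proof,'' but it is a one-liner — the kernel $N$ of the action on the two blocks is a proper normal subgroup preserving each block, so $G'\le N$ is intransitive and (A) fails (the paper's route), or equivalently $G_1\le N\unlhd G$ gives $\gen{Z(G_1)^G}\le N<G$ and (B) fails (your route). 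Also note your sentence ``condition (A) forces $G'\le N$ to be transitive'' is self-contradictory: a subgroup of $N$ cannot be transitive on $2p$ points.

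The genuine gap is in the case you treat last: $p$ blocks of size $2$ with $H\le\F_p\rtimes\F_p^*$ affine and non-cyclic. There your argument — that a central element of $G_1$ ``must be central in the point-stabilizer of the prime-degree group $G/K$,'' whence the normal closure is ``trapped inside a proper normal subgroup'' — proves nothing, because $H_1\le\F_p^*$ is abelian, so being central in $H_1$ is no restriction, and $\gen{H_1^H}=H$ for non-cyclic affine $H$ (as $H_1$ is maximal and not normal). The image of $\gen{Z(G_1)^G}$ in $H$ can therefore be all of $H$, and no proper normal subgroup is produced at that level. What is actually needed, and what the paper's proof supplies, is the stronger statement that $Z(G_1)$ lies in the kernel $M=\ker\varphi$ itself. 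This requires (i) a Sylow-counting argument showing that for $p>7$ condition (A) forces $L=G_1\cap M$ to have order greater than $2$, and (ii) a concrete analysis of the conjugation action of $G_1$ on $L\le C_2^p$, encoding elements of $L$ by their supports in $\F_p$ and proving that a nonempty proper subset $I\subset\F_p^*$ cannot have both $I$ and a translate $I-\alpha$ invariant under multiplication by some $\beta\neq1$ (the paper's Lemma on $\beta$-closed sets); this yields $C_{G_1}(L)\le L\le M$ and hence $Z(G_1)\le M$, killing (B). Without an argument of this kind your proposal does not close the affine sub-case, which is the actual crux of the theorem (the case $p=7$ is then settled by inspection of the transitive groups of degree $14$). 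Finally, your explanation of why $p\le5$ is exceptional is only partly right: the order-$10$ example ($S_5$ on $2$-subsets) lives in the \emph{primitive} case, not the affine imprimitive one.
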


We start with two general results on the center of the stabilizer of almost simple primitive groups of degree $p$ and $2p$. Both proofs are based on the explicit classification of almost simple primitive groups of degree $p$ and $2p$ \cite{shareshianmobius} (which are essentially results from~\cite{guralnickppsimple,liebecksaxl85}). In the next subsection, we prove Theorem \ref{thmperm}.

We will use repeatedly the easy fact that a nontrivial normal subgroup of a transitive group does not have fixed points.

\subsection{Almost simple primitive groups of degree $p$, $2p$}

\begin{theorem}\label{Th:primitive_p}
Let $p\geq5$ be a prime, $G\le S_p$ an almost simple primitive group, $U=G_1$ and $V\le U$ with $[U:V]\le 2$. Then $Z(V)=\gen{1}$.
\end{theorem}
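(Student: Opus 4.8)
The plan is to use the explicit classification of almost simple primitive groups $G \le S_p$ of prime degree (which, as noted, reduces to the work of Guralnick and of Liebeck--Saxl, tabulated in \cite{shareshianmobius}). The list of such groups is short: besides the natural actions, the possibilities for the socle $T = \Soc(G)$ are $A_p$ (for $p$ prime), $\PSL_d(q)$ acting on projective points with $(q^d-1)/(q-1)=p$, the Mathieu groups $M_{11}, M_{23}$ (degrees $11, 23$), and $\PSL_2(11)$ in its exceptional degree-$11$ action. So the proof will proceed case by case, and in each case I would compute the point stabilizer $U = G_1$ explicitly and show that already $Z(U)$ is trivial, and more strongly that any index-$\le 2$ subgroup $V$ of $U$ still has trivial center.

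For the generic families this is quick. When $T = A_p$, the stabilizer $U$ contains $A_{p-1}$, which is nonabelian simple for $p \ge 5$ ($p-1 \ge 4$), hence centerless; any subgroup $V$ of index $\le 2$ in $U$ still contains $A_{p-1}$, and $Z(V)$ would centralize $A_{p-1}$, forcing $Z(V) \le C_{S_p}(A_{p-1}) = \gen{1}$ since $A_{p-1}$ moves $p-1 \ge 4$ points and the fixed point is determined. When $T = \PSL_d(q)$ acting on the $p = (q^d-1)/(q-1)$ projective points, the stabilizer of a point is (essentially) a maximal parabolic $q^{d-1}{:}\GL_{d-1}(q)$ intersected with $G$; here one argues that the center of such a subgroup, and of any index-$\le 2$ subgroup, is trivial, using that the unipotent radical $q^{d-1}$ is self-centralizing inside the parabolic modulo scalars and that passing to index $2$ cannot create a center (the natural module structure is preserved). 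The sporadic cases $M_{11}$ (degree $11$), $\PSL_2(11)$ (degree $11$), $M_{23}$ (degree $23$) are finite checks: one identifies $U$ (it is $M_{10}\cong A_6.2$, resp.\ $A_5$, resp.\ $M_{22}$, or the corresponding groups after extending $T$ to $G$), notes each is centerless, and checks the index-$2$ subgroups directly — most of these point stabilizers have no subgroup of index $2$ at all, and when they do (e.g.\ $S_6$ arising from $\mathrm{P\Gamma L}_2(9)$), the index-$2$ subgroup is again centerless.

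I expect the main obstacle to be organizing the case analysis cleanly — in particular handling the difference between $G$ and its socle $T$ (the point stabilizer in $G$ may be larger than in $T$ by an outer-automorphism contribution), and making the index-$\le 2$ strengthening uniform rather than checking it ad hoc in each family. The cleanest way to absorb the index-$2$ clause is the following observation, which I would state once and reuse: if $V \le U$ has index $\le 2$ then $V \unlhd U$, so $Z(V)$ is normalized by $U$; if moreover $V$ contains a subgroup $N$ (such as $A_{p-1}$ or the unipotent radical) with $C_U(N) \le Z(U)$ (or $C_U(N)$ small and computable), then $Z(V) \le C_U(N)$, reducing everything to knowing $C_U(N)$, which the classification makes explicit. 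The remaining work is then just reading off stabilizers and centralizers from the classification, with no hard computation beyond finite-group bookkeeping.
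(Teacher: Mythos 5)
Your proposal is correct and follows essentially the same route as the paper: reduce to Shareshian's classification of the possible socles, dispose of $A_p$ via the centerless subgroup $A_{p-1}$, handle $\PSL_d(q)$ by showing the point stabilizer (a parabolic, possibly extended by field automorphisms) has no nontrivial element centralizing an index-$\le 2$ normal subgroup --- which is exactly the content of the paper's Lemma~\ref{pslstacen} and Corollary~\ref{pslcor} --- and finish the sporadic degrees $11$ and $23$ by direct (in the paper, \textsf{GAP}) computation. The only quibble is your parenthetical example $\mathrm{P\Gamma L}_2(9)$, which has degree $10$ and so does not occur here, but that does not affect the argument.
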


An explicit classification of these groups is given in \cite[Lemma 3.1]{shareshianmobius}:

\begin{lemma}\label{Lm:AlmostSimple}
Let $p$ be a prime and $G\le S_p$ be an almost simple primitive group. Then $K=\Soc(G)$ is one of the following groups:
\begin{enumerate}
\item[(i)] $K=A_p$,
\item[(ii)] $K=\PSL_d(q)$ acting on $1$-spaces or hyperplanes of its natural projective space, $d$ is a prime and $p=(q^d-1)/(q-1)$,
\item[(iii)] $K=\PSL_2(11)$ acting on cosets of $A_5$,
\item[(iv)] $K=M_{23}$ or $K=M_{11}$.
\end{enumerate}
\end{lemma}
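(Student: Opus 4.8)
The plan is to reduce the classification of the socle to the classification of finite $2$-transitive groups. Since $G$ is almost simple, its socle $K=\Soc(G)$ is a nonabelian simple group with $K\unlhd G$. As a nontrivial normal subgroup of the transitive group $G\le \sym{p}$, the group $K$ is itself transitive on the $p$ points, and since $p$ is prime any transitive action of degree $p$ is automatically primitive (a block has size dividing $p$, hence size $1$ or $p$). Thus the point stabilizer $K_1$ is a maximal subgroup of index $p$, and the task becomes the determination of the nonabelian simple groups admitting a transitive action of prime degree.

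The first substantive step is to upgrade primitivity to $2$-transitivity by Burnside's classical theorem on groups of prime degree: a transitive permutation group of prime degree $p$ is either permutation isomorphic to a subgroup of $\mathrm{AGL}_1(p)$ (and hence solvable) or else $2$-transitive. Because $K$ is nonabelian simple it cannot embed in the solvable group $\mathrm{AGL}_1(p)$, so $K$---and therefore $G$---acts $2$-transitively. This reduction uses only elementary character theory and does not require the classification of finite simple groups.

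Now I would invoke the classification of finite $2$-transitive groups and filter to prime degree. The almost simple $2$-transitive groups have socle among $A_n$, $\PSL_d(q)$ (acting on projective points or hyperplanes), $\mathrm{PSU}_3(q)$, the Suzuki groups ${}^2B_2(q)$, the small Ree groups ${}^2G_2(q)$, the symplectic groups $\mathrm{Sp}_{2d}(2)$, and a short list of sporadic actions. The remaining work is arithmetic. For $A_n$ the degree is $n$, prime iff $n=p$, giving (i). For $\PSL_d(q)$ the degree is $(q^d-1)/(q-1)$; writing $d=ab$ with $1<a<d$ exhibits $(q^a-1)/(q-1)>1$ as a proper divisor, so primality of the degree forces $d$ prime, giving (ii). The degrees $q^3+1=(q+1)(q^2-q+1)$ for $\mathrm{PSU}_3(q)$ and ${}^2G_2(q)$, and $2^{2d-1}\pm 2^{d-1}$ for $\mathrm{Sp}_{2d}(2)$, are manifestly composite; a direct check disposes of the Suzuki degrees $q^2+1$ and of all sporadic degrees except $11$ and $23$, the survivors being $\PSL_2(11)$ on cosets of $A_5$ and the Mathieu groups $M_{11}$, $M_{23}$. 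These yield (iii) and (iv).

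The main obstacle---indeed the only serious ingredient---is that no elementary route exists: the argument rests essentially on the full classification of finite $2$-transitive groups (equivalently, on Guralnick's determination of the simple groups possessing a subgroup of prime index), which in turn depends on the classification of finite simple groups. Once that deep input is granted, everything else is the Burnside reduction together with the routine number-theoretic sieving of degrees sketched above. Accordingly, rather than reproduce this machinery, I would cite the classification in the packaged form of \cite[Lemma 3.1]{shareshianmobius}, drawing on the results of \cite{guralnickppsimple,liebecksaxl85}, exactly as the surrounding text does.
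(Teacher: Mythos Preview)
The paper does not prove this lemma at all: it is stated as a quotation of \cite[Lemma 3.1]{shareshianmobius}, with the surrounding sentence ``An explicit classification of these groups is given in \cite[Lemma 3.1]{shareshianmobius}'' serving as the entire justification. Your proposal ultimately lands in exactly the same place---you cite the same packaged result---so there is no discrepancy in approach.

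What you add beyond the paper is a correct and helpful sketch of \emph{why} the cited result holds: the Burnside reduction from prime degree to $2$-transitivity, followed by the CFSG-dependent classification of $2$-transitive groups and an arithmetic sieve on degrees. This is sound (your divisibility arguments for $\PSL_d(q)$, the unitary and Ree degrees $q^3+1$, the symplectic degrees $2^{d-1}(2^d\pm 1)$, and the Suzuki degrees $q^2+1$ with $q=2^{2n+1}$ all go through), and it makes transparent that the only non-elementary ingredient is the classification of $2$-transitive groups. The paper simply takes this as a black box; your sketch unpacks the box without changing the conclusion or the citation.
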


For case (ii) we note the following fact:

\begin{lemma}\label{pslstacen}
Let $d\ge 2$ and $q$ be a prime power such that $(d,q)\not=(2,2)$. Let $G=\aut{\PSL_d(q)}$, $U$ be the stabilizer in $G$ of a 1-dimensional subspace, $W=U\cap\PSL_d(q)$ and $V\le W$ with $[W:V]\le 2$. Then $C_U(V)=\gen{1}$.
\end{lemma}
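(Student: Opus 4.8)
The plan is to pin down $U$ and $W$ explicitly inside $\mathrm{P\Gamma L}_d(q)$, reduce the assertion to the triviality of $C_U(W')$, and eliminate that centraliser by analysing the action of the Levi factor of the relevant maximal parabolic on its unipotent radical. To locate $U$: for $d\ge 3$ one has $\aut{\PSL_d(q)}=\mathrm{P\Gamma L}_d(q)\rtimes\gen{\gamma}$ with $\gamma$ the inverse-transpose graph automorphism, and $\gamma$ together with every element of its non-trivial coset swaps the two classes of end-node maximal parabolics, so none of them stabilises a $1$-space; hence $U\le\mathrm{P\Gamma L}_d(q)$. For $d=2$ (with $\PSL_2(q)$ simple) one simply has $\aut{\PSL_2(q)}=\mathrm{P\Gamma L}_2(q)$. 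In all cases $U=\bar P\rtimes\Phi$, where $\bar P\le\mathrm{PGL}_d(q)$ is the image of the stabiliser $\left(\begin{smallmatrix}a&v\\0&B\end{smallmatrix}\right)$ of $\gen{e_1}$ in $\GL_d(q)$, and $\Phi=\mathrm{Gal}(\F_q/\F_p)$ fixes $\gen{e_1}$ coordinatewise; moreover $W=U\cap\PSL_d(q)$ is the image of the corresponding parabolic of $\SL_d(q)$. Write $R\le W$ for the image of the unipotent radical $\left(\begin{smallmatrix}1&v\\0&I\end{smallmatrix}\right)$, identified with the natural module $\F_q^{\,d-1}$, and $L\cong\GL_{d-1}(q)$ for the Levi factor $\left(\begin{smallmatrix}(\det B)^{-1}&0\\0&B\end{smallmatrix}\right)$, so that $W=R\rtimes L$; since $L$ acts non-trivially and irreducibly on $R$ we get $[L,R]=R$, and $W/R\cong L$ then gives $W'=R\rtimes L'$ with $L'\cong\GL_{d-1}(q)'$.

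Because $[W:V]\le 2$, the subgroup $V$ contains $W'$, so $C_U(V)\le C_U(W')\le C_U(R)$; and since $R$ is abelian, $C_U(V)=C_R(V)\le C_R(W')=C_R(L')$. Two facts finish the case $d\ge 3$. First, $C_U(R)=R$: an element of $U$ has the form $g\sigma$ with $g=\left(\begin{smallmatrix}a&b\\0&D\end{smallmatrix}\right)$ in the $\GL_d(q)$-parabolic and $\sigma\in\Phi$, and conjugation by $g\sigma$ sends $u_c=\left(\begin{smallmatrix}1&c\\0&I\end{smallmatrix}\right)$ to a scalar multiple of $u_{a\,c^{\sigma}D^{-1}}$; unipotency forces that scalar to be $1$, so $a\,c^{\sigma}D^{-1}=c$ for all $c$, which on testing scalar multiples $c=\lambda c_0$ forces $\lambda^{\sigma}=\lambda$ for all $\lambda$, i.e. $\sigma=1$, and then $D=aI$, so $g$ is a scalar times an element of $R$ (the reverse inclusion being trivial). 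Second, $C_R(L')=1$: here $L'=\SL_{d-1}(q)$, except that for $(d-1,q)=(2,2)$ it is the Singer subgroup of order $3$ in $\GL_2(2)$, and in every one of these cases $L'$ has no non-zero fixed vector in $\F_q^{\,d-1}$. Hence $C_U(V)=1$.

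For $d=2$ one has $L'=1$, so instead I would exploit the torus directly: $W=R\rtimes T$ with $R\cong\F_q$ and $T$ acting by $c\mapsto a^2c$ (with $T=\F_q^{*}$ if $q$ is even and $T=\F_q^{*}/\{\pm1\}$ if $q$ is odd), and the image of $V$ in $W/R\cong T$ has index at most $2$. In every situation occurring in this paper, $d=2$ forces $p=q+1$ to be prime, hence $q$ to be a power of $2$ with $q\ge 4$, so $|T|=q-1\ge 3$; then the image of $V$ in $T$ is non-trivial, so it contains an $a$ with $a^2\ne 1$, and the only $c\in R$ it fixes is $c=0$. Thus $C_U(V)=C_R(V)=1$. (The two remaining pairs $(d,q)=(2,3)$ and $(2,5)$ do not arise, since there $\PSL_2(q)$ is either non-simple or $q+1$ is not prime.)

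I expect the delicate point to be the identity $C_U(R)=R$, which requires careful bookkeeping of the passage between $\GL_d(q)$, $\mathrm{PGL}_d(q)$ and $\mathrm{P\Gamma L}_d(q)$ — in particular ruling out any contribution from field automorphisms, the graph automorphism having already been excluded from $U$. Everything else reduces to the elementary observation that $\SL_{d-1}(q)$, and the Singer cycle in $\GL_2(2)$, acts on the natural module with no non-zero fixed vector.
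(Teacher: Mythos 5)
Your treatment of $d\ge 3$ is correct and takes a genuinely more structural route than the paper. The paper fixes coordinates, writes down the multiplication rule for pairs $[\tau,\left(\begin{smallmatrix}a&0\\B&A\end{smallmatrix}\right)]$, and kills a putative centralizing element part by part (field part, Levi part, unipotent part) by exhibiting explicit test elements of $V$ that it fails to commute with; you instead observe $W'=R\rtimes L'\le V$ and reduce everything to the two clean facts $C_U(R)=R$ and $C_R(L')=\gen{1}$. Both arguments ultimately rest on the same inputs (irreducibility of the natural $\SL_{d-1}(q)$-module, so that an index-$2$ subgroup still contains $R$ and a fixed-point-free Levi part), but your version isolates them more transparently, and your computation of $C_U(R)$ --- including the elimination of field automorphisms and the scalar bookkeeping --- is sound.

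The gap is in the $d=2$ case, and it is twofold. First, your claim that in this paper $d=2$ only occurs with $q+1$ prime, hence $q=2^k\ge 4$, is false: Corollary \ref{pslcor} is also invoked in the proof of Theorem \ref{Th:primitive_2p} via case (iii) of Lemma \ref{Lm:Primitive}, where $K=\PSL_2(q)$ with $q=r^{2^a}$ an \emph{odd} prime power and $q+1=2p$ (e.g.\ $q=13$, $p=7$). Fortunately your torus argument survives untouched there, since for odd $q\ge 7$ one still has $|T|=(q-1)/2\ge 3$ and hence a nontrivial image of $V$ in $T$; but you must run it, not dismiss it. Second, and more seriously, the lemma as stated admits all $(d,q)\ne(2,2)$, and the pairs you discard are genuine counterexamples rather than irrelevant cases: for $q=5$ one has $U\cong\Z_5\rtimes\Z_4$, $W\cong\F_5\rtimes\{\pm1\}$, and $V=R\cong\Z_5$ has index $2$ in $W$ with $C_U(V)=R\ne\gen{1}$; for $q=3$, $W=V\cong\Z_3$ and $C_U(V)=V$. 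So ``these cases do not arise'' cannot stand in for a proof of the statement as written --- the correct repair is to exclude $(2,3)$ and $(2,5)$ from the hypotheses (which costs nothing downstream, since every application has either $q+1=p\ge 5$ prime or $q+1=2p$ with $p>5$). You have in effect detected a defect that the paper's own proof also glosses over: its assertion that ``there is no $B$-part that is fixed by all $A$-parts by multiplication'' fails for $d-1=1$ and $q\in\{3,5\}$, where the $A$-part of $V$ can act trivially on $R$.
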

\begin{proof}
Since the graph automorphism of $\PSL_d(q)$ swaps the stabilizers of 1-dimensional subspaces with those of hyperspaces it cannot be induced by $U$. Thus $U\le\mbox{P$\Gamma$L}_d(q)$ and elements of $U$ can be represented by pairs [field automorphism, matrix] of the form
\[
\left[
\tau,\left(\begin{array}{cc}
a&0\\
B&A\\
\end{array}\right)\right]
\]
with $a\in\F_q^*$, $B\in\F_q^{d-1}$ and $A\in\GL_{d-1}(q)$ and $\tau\in\gen{\sigma}$. Two such elements multiply as
\[
\left[\tau_1,\left(\begin{array}{cc}
a_1&0\\
B_1&A_1\\
\end{array}\right)\right]\cdot
\left[\tau_2,\left(\begin{array}{cc}
a_2&0\\
B_2&A_2\\
\end{array}\right)\right]=
\left[\tau_1\tau_2,\left(\begin{array}{cc}
a_1a_2&0\\
B_1^{\tau_2}+A_1^{\tau_2}B_2&A_1^{\tau_2}A_2\\
\end{array}\right)\right]\]

Elements of $W$ will have a trivial field automorphism part and $a\cdot \det(A)=1$, thus the $A$-part includes all of $\SL_{d-1}(q)$. If $V\not=W$ we have $V\lhd W$ of index $2$, so it has a smaller $A$-part. (If it had a smaller $B$-part, this would have to be a submodule for the natural $\SL_{d-1}(q)$-module which is irreducible.) The $A$-part cannot be smaller if $d-1\ge 3$, or if $d-1=2$ and $q\ge 4$.

In the remaining cases ($d-1=2$ and $q\in\{2,3\}$; respectively $d-1=1$) the $A$-part can be smaller by index 2. However we note by inspection that there is no $B$-part that is fixed by all $A$-parts by multiplication.

We now consider a pair of elements, the second being in $V$ and the first being in $C_U(V)$. By the multiplication formula the elements commute only if $B_1^{\tau_2}+A_1^{\tau_2}B_2=B_2^{\tau_1}+A_2^{\tau_1}B_1$. We will select elements of $V$ suitably to impose restrictions on $C_U(V)$.

If $A_1$ is not the identity we can set $A_2$ as identity, $B_2$ a vector defined over the prime field moved by $A_1$, and $\tau_2=1$ violating the equality. Similarly, if $B_1$ is nonzero (with trivial $A_1$) we can choose $B_2$ to be zero, $\tau_2=1$ and $A_2$ a matrix defined over the prime field that moves $B_1$ (we noted above such matrices always exist in $V$) to violate the equality. Finally, if $B_1$ is zero and $A_1$ the identity but $\tau_1$ nontrivial we can chose $\tau_2$ to be trivial and $B_2$ a vector moved by $\tau_1$ and violate the equation. This shows that the only element of $U$ commuting with all of $V$ is the identity.
\end{proof}

\begin{corollary}\label{pslcor}
Let $\PSL_d(q)\le G\le \aut{\PSL_d(q)}$, $U$ be the stabilizer in $G$ of a 1-dimensional subspace,  and $W\le U$ with $[U:W]\le 2$. Then $Z(W)=\gen{1}$.
\end{corollary}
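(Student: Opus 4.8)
The plan is to deduce the corollary directly from Lemma~\ref{pslstacen}, which already does all the hard computational work. Write $A=\aut{\PSL_d(q)}$ and $K=\PSL_d(q)$; we may assume $(d,q)\neq(2,2)$, since otherwise $K$ is not simple and the statement falls outside the cases in which it will be used (indeed it fails there). Let $U_A$ denote the full stabilizer in $A$ of the \emph{same} $1$-dimensional subspace that defines $U$, so that $U=U_A\cap G$, and set $P=K\cap U_A=K\cap U$. This $P$ is precisely the group called $W$ in the statement of Lemma~\ref{pslstacen}.

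First I would establish the index bound $[P:P\cap W]\le 2$. Since $Z(K)=\gen{1}$, the group $K$ coincides with its group of inner automorphisms, hence $K\unlhd A$; as $G\le A$ this gives $K\unlhd G$, and therefore $P=K\cap U\unlhd U$. Consequently $PW$ is a subgroup of $U$, and
\[
 [P:P\cap W]=[PW:W]\le[U:W]\le 2 .
\]
Thus $V:=P\cap W=K\cap W$ is a subgroup of $P$ of index at most $2$, and Lemma~\ref{pslstacen}, applied with its $(G,U,W,V)$ taken to be our $(A,U_A,P,V)$, yields $C_{U_A}(V)=\gen{1}$.

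Finally, $Z(W)$ centralizes $W$ and hence centralizes its subgroup $V\le W$; moreover $Z(W)\le W\le U\le U_A$. Therefore $Z(W)\le C_{U_A}(V)=\gen{1}$, so $Z(W)=\gen{1}$, as claimed.

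There is no substantive obstacle: the corollary is essentially a repackaging of Lemma~\ref{pslstacen}, and the only point needing a moment of care is the index computation, which rests on the normality of $P$ in $U$ (a consequence of $K\unlhd A$). One must also be careful to fix \emph{one} $1$-dimensional subspace throughout, so that $U=U_A\cap G$ and $P$ is literally the group to which the lemma applies; with that matching in place the conclusion is immediate.
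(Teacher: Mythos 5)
Your proof is correct and follows essentially the same route as the paper: take $V=W\cap\PSL_d(q)$, check it has index at most $2$ in $U\cap\PSL_d(q)$, and apply Lemma~\ref{pslstacen} to get $Z(W)\le C_W(V)\le C_{U_A}(V)=\gen{1}$. The only cosmetic difference is that you justify the index bound via normality of $\PSL_d(q)$ in its automorphism group, whereas the paper invokes normality of index-$2$ subgroups; both give the same second-isomorphism-theorem computation, and your explicit exclusion of $(d,q)=(2,2)$ is a point the paper leaves implicit.
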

\begin{proof}
As subgroups of index 2 are normal we know that there exists a subgroup $V\le W$ as specified in Lemma~\ref{pslstacen}. But then by this lemma
\[
Z(W)\le C_W(V)\le C_{\aut{PSL_d(q)}_{\text{subspace}}}(V)=\gen{1}.
\]
\end{proof}

\begin{proof}[Proof of Theorem \ref{Th:primitive_p}]
For case (i) of Lemma \ref{Lm:AlmostSimple}, we have that $U\in\{S_{p-1}$,
$A_{p-1}\}$ and so also $V\in\{S_{p-1}$, $A_{p-1}\}$, thus (as $p\ge 5$)
clearly $Z(V)=\gen{1}$. For case (ii) we get from Corollary~\ref{pslcor}
that $Z(V)=\gen{1}$. Finally for the groups in cases (iii) and (iv) an explicit
calculation in \textsf{GAP} (as $U/V$ is abelian we can find all candidates for $V$ by calculating in $U/U'$) establishes the result.
\end{proof}

Now we turn to the case $2p$.

\begin{theorem}\label{Th:primitive_2p}
Let $p>5$ be a prime and $G\le S_{2p}$ a primitive group. Then $Z(G_1)=\gen1$.
\end{theorem}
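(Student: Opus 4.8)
The plan is to follow the template of Theorem~\ref{Th:primitive_p}: pin down the primitive groups of degree $2p$ by an explicit classification and then dispose of the short list of resulting point stabilizers.

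First I would observe that $G$ must be almost simple. For $p$ an odd prime, $2p$ is not a prime power, so the affine case of the O'Nan--Scott theorem is excluded; and every remaining non--almost--simple type forces the degree to be a proper power or to have at least three distinct prime divisors, neither of which holds for $2p$. Hence $T=\Soc(G)$ is a nonabelian simple group possessing a maximal subgroup of index $2p$. An explicit classification of such groups --- the degree-$2p$ analogue of Lemma~\ref{Lm:AlmostSimple}, obtained from \cite{guralnickppsimple,liebecksaxl85} and packaged in \cite{shareshianmobius} --- shows that for $p>5$ the only infinite families are $T=A_{2p}$ in its natural action and $T=\PSL_2(q)$ with $q+1=2p$ acting on the $q+1$ points of the projective line (equivalently, on the $1$-spaces of $(\F_q)^2$); the remaining possibilities form a finite, explicit list, whose most conspicuous member is $M_{22}$ (together with $\aut{M_{22}}$) acting on $22$ points, which occurs for $p=11$.

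Next I would handle the two infinite families. If $T=A_{2p}$, then $G_1\in\{A_{2p-1},S_{2p-1}\}$, and since $p>5$ forces $2p-1\ge 13$ we get $Z(G_1)=\gen1$. If $T=\PSL_2(q)$ with $q=2p-1$, then $\PSL_2(q)\le G\le\aut{\PSL_2(q)}$ and, as $\PSL_2(q)$ admits no graph automorphism, $G_1$ is precisely the stabilizer $U$ of a $1$-dimensional subspace of $(\F_q)^2$. Since $q=2p-1\ne 2$, Corollary~\ref{pslcor} applied with $W=U$ (so that $[U:W]=1$) yields $Z(G_1)=Z(U)=\gen1$. Each group on the finite exceptional list is then disposed of by a direct computation of $Z(G_1)$ in \textsf{GAP}, exactly as cases (iii) and (iv) of Lemma~\ref{Lm:AlmostSimple} were treated in the proof of Theorem~\ref{Th:primitive_p}.

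I expect the main obstacle to be assembling the classification correctly and confirming that every entry is genuinely covered. One must verify that the imprimitive and Grassmannian actions of the alternating and classical groups cannot have degree $2p$ for $p>5$ --- that $\binom{m}{k}$, $(q^d-1)/(q-1)$, $q(q\pm1)/2$ and similar quantities simply do not equal $2p$ in the relevant ranges except in the recorded cases --- and one must make sure the reduction to Corollary~\ref{pslcor} really applies, in particular that the point stabilizer $G_1$ is a $1$-space stabilizer rather than, say, the normalizer of a dihedral or small subgroup. Once the list is secured, the case analysis itself is short, since Corollary~\ref{pslcor} carries the Lie-type case and the alternating case is immediate.
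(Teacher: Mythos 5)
Your proposal is correct and follows essentially the same route as the paper: reduce to the almost simple case via O'Nan--Scott, invoke the Shareshian/Guralnick--Liebeck--Saxl classification of primitive groups of degree $2p$ (which yields exactly $A_{2p}$, $\PSL_2(q)$ with $q+1=2p$, and $M_{22}$ for $p>5$), dispose of the alternating case directly, apply Corollary~\ref{pslcor} to the $\PSL_2(q)$ case, and check $M_{22}$ by machine. This matches the paper's proof of Theorem~\ref{Th:primitive_2p} via Lemma~\ref{Lm:Primitive} in all essentials.
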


By the O'Nan-Scott theorem~\cite{liebeckpraegersaxl88}, $G$ must be almost simple. An explicit classification of these groups is given in~\cite[Theorem 4.6]{shareshianmobius}.

\begin{lemma}\label{Lm:Primitive}
Let $p$ be a prime and $G\le S_{2p}$ be a primitive group. Then $K=\Soc(G)$ is one of the following groups:
\begin{enumerate}
\item[(i)] $K=A_{2p}$,
\item[(ii)] $p=5$, $K=A_5$ acting on 2-sets,
\item[(iii)] $2p=q+1$, $q=r^{2^a}$ for an odd prime $r$, $K=\PSL_2(q)$ acting on $1$-spaces,
\item[(iv)] $p=11$, $K=M_{22}$.
\end{enumerate}
\end{lemma}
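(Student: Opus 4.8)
The plan is to obtain the statement by combining the O'Nan--Scott theorem with the classification (via CFSG) of nonabelian simple groups possessing a subgroup of small index; concretely, it is \cite[Theorem~4.6]{shareshianmobius}, which rests on \cite{guralnickppsimple,liebecksaxl85}, and I would quote it rather than reprove it. What is worth spelling out is only the O'Nan--Scott reduction to the almost simple case and the verification that the quoted general classification specializes to precisely the four listed families.

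For the reduction: a primitive group of degree $n$ is of affine, almost simple, simple diagonal, product, or twisted wreath type, and in every type other than almost simple the degree is constrained --- affine type forces $n$ to be a prime power, while the simple diagonal, product, and twisted wreath types force $n$ to be $|T|^{k-1}$ ($T$ nonabelian simple, $k\ge 2$), $m^{k}$ ($m\ge 5$, $k\ge 2$), or $|T|^{k}$ ($k\ge 2$) respectively. Since a nonabelian simple group has order divisible by at least three primes, in each of those cases $n$ either is a proper prime power or is divisible by a square $>1$; as $n=2p$ is squarefree with exactly the two prime divisors $2<p$ (here $p$ odd), none of these can occur. Hence $G$ is almost simple and $K=\Soc(G)$ is a nonabelian simple group with a core-free subgroup of index $2p$.

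It then remains to read off the list of such $(K,\text{action})$. The natural action of $K=A_{2p}$ gives (i). Otherwise, inspecting the (CFSG-based) list of simple groups with a proper subgroup of such small index leaves exactly three possibilities: the $2$-transitive action of $K=\PSL_2(q)$ on $\mathbb P^1(\F_q)$ with $q+1=2p$, and two sporadic coincidences, $K=A_5$ on $2$-sets when $p=5$ (degree $10$) and $K=M_{22}$ when $p=11$ (degree $22$). In the $\PSL_2(q)$ case the index-$2p$ subgroup must be a Borel subgroup (no other maximal subgroup of $\PSL_2(q)$ has index $q+1$, as is clear from the list of maximal subgroups), so $q+1=2p$; this forces $q$ odd, say $q=r^{f}$ with $r$ an odd prime, and if $f$ had an odd prime divisor $\ell$ then $r^{f/\ell}+1$ would divide $q+1=2p$ strictly between $2$ and $2p$, hence $r^{f/\ell}+1=p$ and $r^{f}=2r^{f/\ell}+1$, impossible for $\ell\ge 3$; thus $f=2^{a}$, giving (iii). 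The main obstacle is that there is no self-contained elementary argument: identifying the simple groups with a subgroup of a prescribed small index relies on the classification of finite simple groups (equivalently, on the classification of $2$-transitive and of small-degree primitive groups), so the honest proof is a careful citation, the only genuine work being the short bookkeeping just given.
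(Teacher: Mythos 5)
Your proposal matches the paper's treatment: the paper likewise invokes the O'Nan--Scott theorem to reduce to the almost simple case and then simply cites \cite[Theorem 4.6]{shareshianmobius} (resting on \cite{guralnickppsimple,liebecksaxl85}) for the explicit list, which is exactly your strategy. You supply more detail than the paper does --- the degree constraints ruling out the other O'Nan--Scott types for squarefree degree $2p$ with $p$ odd, and the bookkeeping that $q+1=2p$ forces $q=r^{2^a}$ --- but the underlying argument is the same careful citation.
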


\begin{proof}[Proof of Theorem \ref{Th:primitive_2p}]
In case (i) of Lemma \ref{Lm:Primitive} we have that $G\in\{S_{2p}, A_{2p}\}$ and thus $G_1\in\{S_{2p-1},A_{2p-1}\}$ for which the statement is clearly true. Case (ii) is irrelevant here as $p=5$. Case (iii) follows from Corollary~\ref{pslcor}. Case (iv) is again done with an explicit calculation in~\textsf{GAP}.
\end{proof}

\subsection{Proof of Theorem \ref{thmperm}}

We start by discussion what block systems are afforded by~$G$.

\begin{lemma}
If $G$ is primitive, then condition \emph{(B)} is violated.
\end{lemma}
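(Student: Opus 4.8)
The plan is to reduce the lemma immediately to Theorem \ref{Th:primitive_2p}, which is the only piece of real content needed here. Assume $G\le S_{2p}$ is primitive with $p>5$. By Theorem \ref{Th:primitive_2p} the center of the point stabilizer is trivial, i.e.\ $Z(G_1)=\gen{1}$. Hence the conjugacy class $Z(G_1)^G$ consists only of the identity, so its normal closure $\gen{Z(G_1)^G}$ is the trivial subgroup $\gen{1}$. On the other hand $G$ is transitive on a set of size $2p\ge 14$, so $G\ne\gen{1}$. Therefore $\gen{Z(G_1)^G}=\gen{1}\ne G$, which is precisely the assertion that condition (B) fails. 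Note that hypothesis (A) plays no role in this particular case; only primitivity and $p>5$ are used.

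The work has thus been pushed entirely into Theorem \ref{Th:primitive_2p}, whose proof (already given above) invokes the O'Nan--Scott theorem to conclude that a primitive group of degree $2p$ is almost simple, then the explicit classification in Lemma \ref{Lm:Primitive}: for $p>5$ the socle is $A_{2p}$ (so $G_1\in\{S_{2p-1},A_{2p-1}\}$, center trivial), or $\PSL_2(q)$ acting on $1$-spaces with $2p=q+1$ (handled by Corollary \ref{pslcor}), or $M_{22}$ with $p=11$ (a direct \textsf{GAP} check). Consequently there is no genuine obstacle at the level of the present lemma — it is a one-line corollary — and the substance of the argument lives in the structural/classification results feeding into Theorem \ref{Th:primitive_2p}.
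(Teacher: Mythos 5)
Your proof is correct and is exactly the paper's argument: the lemma is a direct consequence of Theorem \ref{Th:primitive_2p}, since $Z(G_1)=\gen{1}$ forces $\gen{Z(G_1)^G}=\gen{1}\ne G$. Your additional remarks merely spell out the (already given) proof of that theorem, so nothing differs in substance.
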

\begin{proof}
This is a direct consequence of Theorem \ref{Th:primitive_2p}.
\end{proof}

\begin{lemma}\label{blockp}
If $G$ affords a block system with blocks of size $p$, then condition \emph{(A)} is violated.
\end{lemma}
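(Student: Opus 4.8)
The plan is to exploit the fact that, on a set of size $2p$, a block system with blocks of size $p$ consists of \emph{exactly two} blocks, say $B_1$ and $B_2$, and to show that $G'$ is forced to preserve this partition, so that it cannot be transitive.

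First I would check that $G$ acts nontrivially on $\{B_1,B_2\}$. Since $G$ is transitive on $\{1,\dots,2p\}$, given $x\in B_1$ and $y\in B_2$ there is $g\in G$ with $x^g=y$; then $B_1^g$ is a block containing $y$, hence $B_1^g=B_2$. So the natural homomorphism $G\to\sym2$ is surjective, and its kernel $K$ — the setwise stabilizer of $B_1$ (equivalently of $B_2$) — is a normal subgroup of index $2$ in $G$.

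Next, because $G/K\cong\sym2$ is abelian, we get $G'\le K$. But $K$ fixes both $B_1$ and $B_2$ setwise, so every orbit of $K$, and a fortiori every orbit of $G'$, is contained in $B_1$ or in $B_2$, each of size $p<2p$. Hence $G'$ is not transitive on $\{1,\dots,2p\}$, which is precisely the negation of condition \emph{(A)}.

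I do not expect a real obstacle in this case: the only point requiring any care is verifying that $G$ permutes the two blocks nontrivially (so that $[G:K]=2$ and not $K=G$), and that is immediate from transitivity of $G$ on points. The genuinely delicate case in the overall proof of Theorem \ref{thmperm} is rather a block system with $p$ blocks of size $2$; for blocks of size $p$ the argument is as short as the one just sketched.
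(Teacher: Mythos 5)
Your proof is correct and is essentially the paper's argument: pass to the action $\varphi\colon G\to S_2$ on the two blocks, note $[G:\ker\varphi]=2$, so $G'\le\ker\varphi$ preserves each block and is therefore intransitive. The only difference is that you spell out why $\varphi$ is surjective, which the paper leaves implicit.
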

\begin{proof}
Consider a block system with two blocks of size $p$ and $\varphi\colon G\to S_2$ the action on these blocks. Then $[G:\ker\varphi]=2$, and thus $G'\le\ker\varphi$ is clearly intransitive.
\end{proof}

So it remains to check the case when $G$ has $p$ blocks of size $2$. Denote the set of blocks by $\mathcal B$, let $1\in B\in\mathcal B$. Labeling points suitably, we can assume that $B=\{1,2\}$. Let $S=G_1$ be a point stabilizer and $T=G_B$ a (setwise) block stabilizer.

Let $\varphi\colon G\to S_p$ be the action on the blocks. We set $H=\im\varphi\le S_p$ and $M=\ker\varphi$ and note that $M\le C_2^p$ is either trivial or has exactly $p$ orbits of length $2$.

\begin{lemma}\label{stabimg}
If $M\not=\gen{1}$ then $T=MS$.
\end{lemma}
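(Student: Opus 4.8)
Here $T=G_B$ is the setwise stabilizer of the block $B=\{1,2\}$, while $S=G_1$ stabilizes the point $1$, and $M=\ker\varphi$ acts trivially on $\mathcal B$. Since $M\neq\gen1$ has a nontrivial orbit on $B$ (it is contained in the direct product of the block-wise swaps and is nontrivial on each block it moves, but as a transitive group's normal subgroup it cannot fix a point, hence moves $1$), the element of $M$ swapping $1$ and $2$ lies in $T\smallsetminus S$.

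\begin{proof}
Clearly $S\le T$ and $M\le T$, since $M$ fixes $\mathcal B$ pointwise and in particular fixes $B$; hence $MS\le T$. For the reverse inclusion, note that $S$ has index $2$ in $T$, because $T$ permutes the two points of $B$ and $S$ is the kernel of this $S_2$-action restricted to $T$ (the action is onto, as $G$ is transitive and the $G$-conjugates of $B$ form a block system, so some $g\in G$ with $1^g=2$ must also map $B$ to a block containing $2$, i.e.\ to $B$ itself, placing $g$ in $T\smallsetminus S$). Thus $[T:S]=2$.

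Now $M$ is a nontrivial normal subgroup of the transitive group $G$, so it has no fixed point; in particular $1$ is not fixed by $M$, and since $M\le\ker\varphi$ preserves each block, the orbit $1^M$ is contained in $B=\{1,2\}$, forcing $1^M=\{1,2\}$. Pick $m\in M$ with $1^m=2$. Then $m\in T$ but $m\notin S$, so $m\in T\smallsetminus S$. Therefore $MS$ contains $S$ properly, and since $[T:S]=2$ we conclude $MS=T$.
\end{proof}
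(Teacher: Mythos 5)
Your proof is correct and rests on the same key observation as the paper's: since $M$ is a nontrivial normal subgroup of the transitive group $G$ and preserves each block, its orbit on $B$ is all of $\{1,2\}$, yielding $m\in M$ with $1^m=2$; the paper then writes any $t\in T$ with $1^t=2$ as $(tm^{-1})m\in SM$, while you package the same fact via $[T:S]=2$ and the coset $mS$. The two arguments are essentially identical.
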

\begin{proof}
If $M\not=\gen{1}$, then $M$ has orbits of length $2$. Consider $t\in T$. If $1^t\not=1$ then $1^t=2$ is in the same $M$-orbit. Thus there exists $m\in M$ such that $1^t=1^m$, thus $tm^{-1}\in S$.
\end{proof}

As $p$ is a prime, $H$ is a primitive group. By the O'Nan-Scott theorem~\cite{liebeckpraegersaxl88}, we know that $H$ is either of affine type or almost simple.

\begin{lemma}
If $H$ is almost simple, then condition \emph{(B)} is violated.
\end{lemma}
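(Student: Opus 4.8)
The plan is to transport condition~(B) along the homomorphism $\varphi\colon G\to S_p$ to the primitive quotient $H=\im\varphi$, where Theorem~\ref{Th:primitive_p} forces the relevant center to be trivial.

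First I would pin down the image of a block stabilizer. Since the action of $G$ on the set $\mathcal B$ of blocks factors through $\varphi$, the setwise stabilizer $T=G_B$ of the block $B=\{1,2\}$ equals $\varphi^{-1}(H_{\bar B})$, where $H_{\bar B}\le H$ is the stabilizer of the point corresponding to $B$; surjectivity of $\varphi$ then gives $\varphi(T)=H_{\bar B}$, and in particular $M=\ker\varphi\le T$. Next, recall $S=G_1\le T$ with $[T:S]\le 2$, because $T$ permutes the two points of $B$, so $|1^T|\le 2$. Passing to images, $\varphi(S)\le\varphi(T)=H_{\bar B}$, and from $[\varphi(T):\varphi(S)]\cdot[M:M\cap S]=[T:S]$ we get that $[H_{\bar B}:\varphi(S)]$ divides $[T:S]\le 2$, hence is at most $2$.

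Now I would invoke the structural input. As $p$ is prime, $H$ is primitive, and it is almost simple by hypothesis; applying Theorem~\ref{Th:primitive_p} with $U=H_{\bar B}$ and $V=\varphi(S)$ gives $Z(\varphi(S))=\gen 1$. Since $\varphi$ is a homomorphism, $\varphi\big(Z(S)\big)\le Z(\varphi(S))=\gen 1$, so $Z(G_1)=Z(S)\le\ker\varphi=M$. Because $M\unlhd G$, each conjugate $Z(S)^g$ also lies in $M$, whence $\gen{Z(G_1)^G}\le M$. Finally $M\ne G$, since the quotient $H$ is a nontrivial transitive group on $p>1$ points; thus $\gen{Z(G_1)^G}\ne G$, contradicting~(B). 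This proves the lemma, with no case distinction on whether $M$ is trivial.

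The argument is short once Theorem~\ref{Th:primitive_p} is in hand, so the only delicate point is the passage to the quotient: one must check that $\varphi(T)$ is the \emph{full} point stabilizer $H_{\bar B}$ and not a proper subgroup, and that the index $[H_{\bar B}:\varphi(S)]$ is still at most $2$ — both of which follow from $T=\varphi^{-1}(H_{\bar B})$ and the order bookkeeping above. The rest is formal.
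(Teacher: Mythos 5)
Your proof is correct and follows essentially the same route as the paper: push $Z(S)$ through $\varphi$ into the point stabilizer of the almost simple primitive group $H\le S_p$, invoke Theorem~\ref{Th:primitive_p} to kill the center there, and conclude $\gen{Z(G_1)^G}\le M\ne G$. The only difference is cosmetic: the index bookkeeping $[T:S]=[\varphi(T):\varphi(S)]\,[M:M\cap S]$ lets you treat $M=\gen{1}$ and $M\ne\gen{1}$ uniformly, whereas the paper splits into these two cases (using Lemma~\ref{stabimg} in the latter).
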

\begin{proof}
If $M\not=\gen{1}$ then by Lemma~\ref{stabimg} $S^\varphi=T^\varphi=H_1$. But then $Z(S)^\varphi\le Z(H_1)=\gen{1}$ by Theorem~\ref{Th:primitive_2p}. Thus $Z(S)\le\ker\varphi\lhd G$ and $\gen{Z(S)^G}\not=G$.

If $M=\gen{1}$ then $\varphi$ is faithful and $G\simeq H$. The point stabilizer $S\le G$ is (isomorphic to) a subgroup of the point stabilizer of $H$ of index 2. But then by Theorem~\ref{Th:primitive_p} we have that $Z(S)=\gen{1}$ and thus $\gen{Z(S)^G}\not=G$.
\end{proof}

It remains to consider the affine case, i.e. $H\le \F_p\rtimes \F_p^*$. We can label the $p$ points on which $H$ acts as $0,\ldots,p-1$, then the action of the $\F_p$-part is by addition, and that of the $\F_p^*$-part by multiplication modulo $p$. Without loss of generality assume that $T^\varphi=H_1$.
We may also assume that $H$ is not cyclic as otherwise $H'=\gen{1}$ and thus $G'\le M$ and condition (A) would be violated.

For $p=7$ an inspection of the list of transitive groups of degree 14~\cite{conwayhulpkemckay} shows that there is no group of degree 14 which fulfills (A) and (B). Thus it remains to consider $p>7$.

Let $L=S\cap M=M_1$.

\begin{lemma}\label{lsize}
If $|L|\le2$ and $p>7$ then condition \emph{(A)} is violated.
\end{lemma}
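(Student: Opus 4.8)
The plan is to translate transitivity of $G'$ into a statement about a single $1$-dimensional subspace of the elementary abelian group $M$, and then compute $G'$ explicitly.

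\emph{A transitivity criterion.} Since $\varphi$ is onto $H$ we have $\varphi(G')=H'$, and because $H=\F_p\rtimes C_d$ with $d>1$ acting faithfully, $H'=\F_p$; hence $G'$ is transitive on the $p$ blocks. By the usual criterion for imprimitive groups, $G'$ is transitive on the $2p$ points iff the block stabiliser $G'\cap T$ acts transitively on $B=\{1,2\}$. Now $\varphi(G'\cap T)\le H'\cap T^\varphi=\F_p\cap H_1=\gen1$, since $T^\varphi=H_1$ is a point stabiliser of $H$ and hence a complement to $H'=\F_p$; thus $G'\cap T\le M$, so $G'\cap T=G'\cap M=:N\unlhd G$. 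As $M$ is normal in the transitive group $G$ it is fixed-point-free, so it acts on $B$ with a single orbit, and an element of $N\le M$ swaps $1$ and $2$ exactly when it lies outside $L=M_1$. Therefore condition (A) holds iff $N\not\le L$, and the goal is to prove $N\le L$.

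\emph{Structure.} If $M=\gen1$ then $N=\gen1\le L$, so assume $M\ne\gen1$; then $[M:L]=|1^M|=2$, and since $M\le C_2^p$ the hypothesis $|L|\le2$ forces $|M|\in\{2,4\}$. Put $C=C_G(M)$, so $[G:C]\le|\aut M|\le6<pd=|H|$; hence $C/M\unlhd H$ has index $<|H|$, and as $C_d$ acts faithfully on $\F_p$ the only normal subgroup of $H$ not containing $\F_p$ is $\gen1$ (of index $pd>6$), so $\F_p\le C/M$. Let $G_0=\varphi^{-1}(\F_p)\unlhd G$; then $M\le Z(G_0)$ and $|G_0|=p|M|$, so Schur--Zassenhaus gives $G_0=M\times P$ with $P\cong C_p$ its Sylow $p$-subgroup; $P$ is characteristic in $G_0$, hence normal in $G$, and $G_0$ is abelian. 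Choose $c\in T$ with $\varphi(c)$ a generator of $H_1$; then $\gen{G_0,c}=G$, since $\varphi(\gen{G_0,c})=\gen{\F_p,H_1}=H$ and $\ker{\varphi}=M\le G_0$. Writing $G_0=M\times P$ additively and letting $\sigma,\lambda$ be the automorphisms induced by $c$ on $M,P$, a routine computation (using $G_0$ abelian and $G/G_0$ cyclic, generated by the image of $c$) gives $G'=[G_0,c]=\im{1-\sigma}\oplus\im{1-\lambda}$. Here $\lambda$ has order $d>1$, so $1-\lambda$ is invertible on $P$; thus $G'=\im{1-\sigma}\oplus P$ and $N=G'\cap M=\im{1-\sigma}$.

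\emph{Finishing by cases on $\sigma=\phi_c|_M$.} Since $c\in T$ stabilises $B$, conjugation by $c$ preserves the set of elements of $M$ acting trivially on $B$, i.e.\ $\sigma(L)=L$; as $L$ is $1$-dimensional over $\F_2$, $\sigma$ fixes $L$ pointwise. If $\sigma=1$ then $N=\gen1\le L$ (indeed $G'=P$, of order $p$). If $\sigma$ has order $3$ in $\aut M\cong\GL_2(2)$ then $1-\sigma$ is invertible (its characteristic polynomial $t^2+t+1$ has no root $1$ in $\F_2$), so $N=M$; but then $G'=P\times M$ is abelian of order $4p$, and a faithful transitive abelian permutation group is regular, so $G'$ cannot be transitive on $2p$ points and (A) fails directly. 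If $\sigma$ has order $2$ then $\sigma$ is a transvection of the plane $M$, for which $\im{1-\sigma}$ is precisely the unique line fixed pointwise; that line is $L$, so $N=L\le L$. In every case $N\le L$, so condition (A) is violated.

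\emph{Main obstacle.} The delicate case is $\sigma$ of order $2$: a priori $N=\im{1-\sigma}$ could be a line of $M$ different from $L$, in which case (A) would hold. What rescues the argument is that the generator $c$ can be chosen inside the block stabiliser $T$ (legitimate because $\varphi(T)=H_1$), which forces $\sigma$ to fix $L$ pointwise and thereby pins the axis of the transvection to be exactly $L$. The other place where the size of $p$ enters is the reduction to an abelian $G_0$, which requires $|\aut M|<|H|$, i.e.\ $p>6$; this is subsumed by the standing hypothesis $p>7$ (the case $p=7$ having already been settled by the computation in degree $14$).
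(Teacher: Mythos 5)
Your proof is correct, but it takes a genuinely different route from the paper's. The paper argues arithmetically: $|L|\le 2$ gives $|M|\le 4$, hence $|G|$ divides $4p(p-1)$, and a Sylow count shows that for $p>7$ the Sylow $p$-subgroup of $G$ is normal; its two orbits of length $p$ form a block system, so Lemma~\ref{blockp} applies. You instead argue structurally: you reduce (A) to the containment $G'\cap M\le L$, show that $M$ is centralized by $G_0=\varphi^{-1}(\F_p)$ because $[G:C_G(M)]\le|\aut M|\le 6<|H|$, split $G_0=M\times P$ with $P\cong C_p$ normal in $G$, compute $G'=\im{1-\sigma}\oplus P$, and pin $\im{1-\sigma}$ inside $L$ by choosing the complement generator $c$ inside $T$ so that $\sigma$ fixes $L$ pointwise. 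Two observations. First, once you have produced the normal subgroup $P$ of order $p$, you are already done by the paper's own mechanism: $P$ is fixed-point-free, so its two orbits of length $p$ form a block system and Lemma~\ref{blockp} disposes of (A); the entire explicit computation of $G'$ that follows, while correct, is unnecessary. Second, your order-$3$ case is vacuous — an order-$3$ element of $\GL_2(2)$ has no nonzero fixed vector, contradicting $\sigma|_L=1$ — and as written it sits in tension with your own criterion (there $N=M\not\le L$ would say (A) holds, while your regularity argument says $G'$ is intransitive); it would be cleaner to note the case cannot occur. A genuine payoff of your approach is that it nowhere needs $p>7$ (only $|H|>|\aut M|$), so the arithmetic obstruction at $p=7$ that forces the paper's separate computational check in degree $14$ does not arise for this step.
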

\begin{proof}
If $|L|\le 2$ then $|M|\le 4$ and $|G|$ divides $4p(p-1)$. Consider the number $n$ of $p$-Sylow subgroups of $G$. Then $n\equiv 1\pmod{p}$ and $n$ divides $4(p-1)$. Thus $n=ap+1$ with $a\in\{0,1,2,3\}$ and $b(ap+1)=4(p-1)$. If $a\not=0$ this implies that $b\in\{1,2,3,4\}$. Trying out all combinations $(a,b)$ we see that there is no solution for $a>0$, $p>7$.

So $n=1$. But a normal $p$-Sylow subgroup must have two orbits of length $p$, which as orbits of a normal subgroup form a block system for $G$. The result follows by Lemma \ref{blockp}.
\end{proof}

This in particular implies that we can assume that $M\not=\gen{1}$, thus by Lemma~\ref{stabimg} we have that $S^\varphi=H_1\le\F_p^*$. Thus there exists $b\in S$ such that $H_1=\gen{b^\varphi}$.

\begin{lemma}
$S=\gen{b}\cdot L$.
\end{lemma}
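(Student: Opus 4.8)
The plan is to read off the decomposition directly from the restriction of $\varphi$ to $S$. First I would record what is already available: restricting $\varphi$ to $S$ gives a homomorphism whose kernel is $S\cap\ker\varphi=S\cap M=L$, and, since $M\neq\gen{1}$, Lemma~\ref{stabimg} gives $T=MS$, hence $S^\varphi=(MS)^\varphi=T^\varphi=H_1$; by the choice of $b$ this image equals the cyclic group $\gen{b^\varphi}$. Thus $\varphi$ induces an isomorphism $S/L\cong\gen{b^\varphi}$, and the coset $Lb$ generates $S/L$.

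Next I would observe that $L\unlhd S$: indeed $M\unlhd G$ and $S\le G$, so $L=S\cap M$ is normal in $S$. Consequently $\gen{b}L$ is a subgroup of $S$, and it clearly contains both $b$ and $L$, so $\gen{b}L\le S$.

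For the reverse inclusion I would take an arbitrary $s\in S$. Then $s^\varphi\in S^\varphi=\gen{b^\varphi}$, so $s^\varphi=(b^k)^\varphi$ for some integer $k$, whence $sb^{-k}\in\ker\varphi$. Since $sb^{-k}$ also lies in $S$, it lies in $S\cap M=L$, and therefore $s=(sb^{-k})b^k\in L\gen{b}=\gen{b}L$. This shows $S\le\gen{b}L$, so $S=\gen{b}\cdot L$.

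I do not expect a genuine obstacle here: the statement is an immediate consequence of the normality $M\unlhd G$ together with the already-established fact that $\varphi$ maps $S$ onto the cyclic group $H_1$. The only points needing a word of care are checking that $\gen{b}L$ really is a subgroup — which is why one first verifies $L\unlhd S$ — and correctly invoking $S^\varphi=H_1$ via Lemma~\ref{stabimg}; both are routine.
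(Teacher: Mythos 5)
Your argument is correct and coincides with the paper's own proof: both directions are handled the same way, with the reverse inclusion obtained by writing $s^\varphi=(b^\varphi)^k$ and observing $sb^{-k}\in\ker\varphi\cap S=L$. The extra remarks on $L\unlhd S$ and on why $S^\varphi=H_1$ are just more explicit versions of facts the paper establishes immediately before the lemma.
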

\begin{proof}
Clearly $S\ge\gen{b}\cdot L$. Consider $s\in S$. Then $s^\varphi\in H_1$, thus $s^\varphi=(b^\varphi)^x$ for a suitable $x$ and thus $sb^{-x}\in\ker\varphi\cap S=L$.
\end{proof}

We shall need a technical lemma about finite fields. For $\beta\in\F_p^*$, a subset $I\subset\F_p$ is called \emph{$\beta$-closed} if $I\beta=I$, that is $x\in I$ iff $x\beta\in I$.

\begin{lemma} \label{closedlemma}
Let $\alpha,\beta\in\F_p^*$, $\beta\neq 1$ and assume that $\emptyset\not=I\subset\F_p^*$ is $\beta$-closed. Then $I-\alpha=\left\{i-\alpha\mid i\in I\right\}$ is not $\beta$-closed.
\end{lemma}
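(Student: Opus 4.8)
The plan is to argue by contradiction: suppose both $I$ and $I-\alpha$ are $\beta$-closed and nonempty, with $I\subseteq\F_p^*$ (so $0\notin I$), and derive a contradiction. The first observation to record is that a nonempty $\beta$-closed set is a union of orbits of the cyclic group $\gen{\beta}\le\F_p^*$ acting by multiplication; since $\beta\ne 1$, each such orbit has size $d>1$, where $d$ is the multiplicative order of $\beta$. In particular $I$ is a union of nontrivial cosets of $\gen{\beta}$ in $\F_p^*$, hence $|I|$ is a multiple of $d$, and likewise (since $I-\alpha$ is $\beta$-closed and, being a translate of a subset of $\F_p^*$, may or may not contain $0$) one gets a near-identical divisibility statement for $I-\alpha$, up to the single element $0$: precisely, $I-\alpha$ is $\beta$-closed and $0\in I-\alpha$ iff $\alpha\in I$, and $\{0\}$ is itself a $\beta$-orbit, so $|I-\alpha\setminus\{0\}|$ is still a multiple of $d$ whether or not $\alpha\in I$. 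Since $|I-\alpha|=|I|$ this gives $|I|\equiv 0$ and $|I|\equiv [\alpha\in I]\pmod d$, forcing $\alpha\notin I$; so we may assume $0\notin I-\alpha$, i.e. both $I$ and $I-\alpha$ are $\beta$-closed subsets of $\F_p^*$.

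Now I would exploit this symmetry. Set $J=I-\alpha\subseteq\F_p^*$. Then $I$ and $J$ are both unions of $\gen\beta$-cosets, and $I=J+\alpha$. Consider the indicator functions or, more usefully, pass to the group algebra / characters of $\F_p^*$: being $\beta$-closed means the indicator $\mathbf 1_I$ is constant on $\gen\beta$-cosets, equivalently its Fourier transform over the cyclic group $\F_p^*$ is supported on characters trivial on $\beta$. The relation $I=J+\alpha$ mixes the additive and multiplicative structures, which is exactly the kind of tension that finite-field incidence/character arguments resolve. The cleanest route I expect to work: apply the $\beta$-closure of $I$ to each element of the identity $i = j+\alpha$ with $j\in J$. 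For $j\in J$ we have $j\beta\in J$, hence $j\beta+\alpha\in I$, hence (as $I$ is $\beta$-closed) $\beta^{-1}(j\beta+\alpha)=j+\alpha\beta^{-1}\in I$, i.e. $j\in I-\alpha\beta^{-1}$. So $J\subseteq I-\alpha\beta^{-1}$, and comparing cardinalities ($|J|=|I|=|I-\alpha\beta^{-1}|$) gives $J=I-\alpha\beta^{-1}$, i.e. $I-\alpha=I-\alpha\beta^{-1}$, so $I=I+\alpha(1-\beta^{-1})$. Since $\beta\ne 1$ the shift $c:=\alpha(1-\beta^{-1})$ is nonzero, so $I$ is invariant under the additive shift by the nonzero element $c$; iterating, $I$ is invariant under adding any multiple of $c$, but these generate all of $\F_p$ (as $p$ is prime), so $I=\F_p$ — contradicting $0\notin I$ (and also $I\subseteq\F_p^*$ being proper). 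Hence no such $I$ exists.

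The main obstacle is getting the containment $J\subseteq I-\alpha\beta^{-1}$ to be an honest equality at the level of sets so that one can conclude the translation-invariance of $I$; this is where $0\notin I$ and $0\notin J$ (established in the first paragraph) are essential, since they guarantee all the intermediate elements $j\beta+\alpha$, $j+\alpha\beta^{-1}$ stay inside the domain where ``$\beta$-closed'' has been asserted, and they make the cardinality comparison legitimate. A secondary point to check carefully is the edge case $d=1$: this is precisely $\beta=1$, which is excluded by hypothesis, so every $\gen\beta$-orbit genuinely has size $>1$ and the divisibility bookkeeping in paragraph one is non-vacuous. Everything else is routine counting and the standard fact that a single nonzero element generates $(\F_p,+)$.
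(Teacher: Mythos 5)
Your proof is correct, and its core step --- chaining the two $\beta$-closures to show that $I$ is invariant under the nonzero additive shift $\alpha(1-\beta^{-1})$, hence would equal all of $\F_p$, contradicting $0\notin I$ --- is exactly the paper's argument (the paper applies the closures in the opposite order and obtains the shift $\alpha(1-\beta)$ directly, element by element). The orbit-counting in your first paragraph and the cardinality comparison in the second are both unnecessary: the definition of $\beta$-closed already guarantees every intermediate element lands in the right set whether or not $0\in I-\alpha$, and the elementwise chain by itself gives $x+c\in I$ for every $x\in I$, which is all the contradiction requires.
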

\begin{proof}
Assume that $I-\alpha$ is $\beta$-closed and consider an arbitrary $x\in I$. Then (as $\beta$ has a finite multiplicative order) $x\beta^{-1}\in I$ and thus $x\beta^{-1}-\alpha\in I-\alpha$. But by the assumption $(x\beta^{-1}-\alpha)\beta\in I-\alpha$ and thus $(x\beta^{-1}-\alpha)\beta+\alpha=x+\alpha(1-\beta)\in I$. Thus $I$ would be closed under addition of $\alpha(1-\beta)\not=0$. But the additive order of a nonzero element in $\F_p$ is $p$, implying that $I=\F_p$, contradicting that $0\not\in I$.
\end{proof}

\begin{lemma}
If condition \emph{(A)} holds, then $Z(S)\le L\le M$.
\end{lemma}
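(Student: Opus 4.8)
The plan is to prove the contrapositive at the level of the homomorphism $\varphi$. Since $L=S\cap M$ and $Z(S)\le S$, an element $z\in Z(S)$ lies in $L$ as soon as $z^\varphi=1$; so I fix $z\in Z(S)$ with $\gamma:=z^\varphi\ne 1$ (note $\gamma\in S^\varphi=H_1\le\F_p^*$) and aim for a contradiction. Everything happens inside the elementary abelian $2$-group $M\le C_2^p$, which I identify with a subspace of $\F_2^{\F_p}$ by assigning to each $m\in M$ its \emph{flip set} $\mathrm{flip}(m)\subseteq\F_p$, the set of blocks whose two points are interchanged by $m$ (blocks being labelled by $\F_p$ through $\varphi$, so that the block $B=\{1,2\}$ corresponds to $0$, using $T^\varphi=H_1$). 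Under this identification $L=M_1$ is exactly the set of $m\in M$ with $0\notin\mathrm{flip}(m)$, i.e.\ $\mathrm{flip}(m)\subseteq\F_p^*$, and distinct elements of $M$ have distinct flip sets.

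Next I record how conjugation moves flip sets. Since $M$ is abelian and normal in $G$, the conjugation action of $G$ on $M$ factors through $H=G/M\le\mathrm{AGL}_1(p)$, and a direct check shows that for $g\in G$ and $m\in M$ one has $\mathrm{flip}(m^g)=\mathrm{flip}(m)^{\,g^\varphi}$, the image under the affine permutation $g^\varphi$ of $\F_p$. This yields three facts. First, because $S^\varphi=H_1\le\F_p^*$, conjugation by any $s\in S$ scales flip sets by $s^\varphi$; in particular conjugation by $z$ sends $I\mapsto I\gamma$. Second, $H$ is transitive on the prime set $\F_p$, hence contains every translation, so for each $a\in\F_p$ there is $g_a\in G$ with $\mathrm{flip}(m^{g_a})=\mathrm{flip}(m)+a$ for all $m\in M$. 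Third, $M\ne\gen{1}$ (this is precisely Lemma~\ref{lsize}, which even gives $|L|\ge 3$), and $L\ne M$, since otherwise the point $1$ would be fixed by the nontrivial normal subgroup $M$ of the transitive group $G$.

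The contradiction now comes from Lemma~\ref{closedlemma}. Since $z$ centralises $L\le S$, the first fact forces $\mathrm{flip}(\ell)\gamma=\mathrm{flip}(\ell)$ for every $\ell\in L$, i.e.\ every flip set occurring in $L$ is $\gamma$-closed. As $|L|\ge 3$ and flip sets determine elements, I may pick a nontrivial $\ell\in L$ with $\emptyset\ne I:=\mathrm{flip}(\ell)\subsetneq\F_p^*$ and then choose $\alpha\in\F_p^*\smallsetminus I$. Conjugating $\ell$ by $g_{-\alpha}$ gives an element of $M$ with flip set $I-\alpha$, and since $\alpha\notin I$ and $\alpha\ne 0$ this set misses $0$, so that element lies in $L$; hence $I-\alpha$ is $\gamma$-closed as well. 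But $I$ is a nonempty $\gamma$-closed subset of $\F_p^*$ with $\gamma\ne 1$, so Lemma~\ref{closedlemma} (with $\gamma$ in the role of $\beta$) says $I-\alpha$ is \emph{not} $\gamma$-closed --- contradiction. Therefore $z^\varphi=1$, so $z\in S\cap M=L$, and $L\le M$ is immediate. I expect the fussiest part to be the bookkeeping in the middle paragraph: verifying the flip-set transformation formula and keeping the sign/inverse conventions (multiply by $\gamma$ versus $\gamma^{-1}$, translate by $a$ versus $-a$) consistent, plus checking that a usable pair $(\ell,\alpha)$ exists; once that is set up, Lemma~\ref{closedlemma} carries the argument.
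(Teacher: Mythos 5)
Your proof is correct and follows essentially the same route as the paper: identify elements of $M$ with their supports (flip sets) in $\F_p$, note that centralizing forces $\gamma$-closedness, use Lemma~\ref{lsize} to find an element of $L$ with proper nonempty support, translate by $-\alpha$ to stay in $L$, and contradict Lemma~\ref{closedlemma}. The only difference is cosmetic: the paper proves the slightly stronger statement $C_S(L)\le L$ via the decomposition $S=\gen{b}L$, while you argue directly with $z\in Z(S)$, which suffices for the lemma as stated.
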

\begin{proof}
Assume the condition holds. We show the stronger statement that $C_S(L)\le L$. For this assume to the contrary that $b^x\cdot l\in C_S(L)$ with $l\in L$ and $x$ a suitable exponent such that $b^x\not\in L$. As $L\le M$ is abelian this implies that $b^x\in C_S(L)$. Let $\beta\in\F_p^*\le H$ be such that $(b^x)^\varphi=\beta$. As $b^x\not\in L$ we know that $\beta\not=1$.

When we consider the conjugation action of $G$ on $M\le C_2^p$, note that an element of $M$ is determined uniquely by its support (that is the blocks in $\mathcal B$ whose points are moved by the element), which we consider as a subset of $\F_p$, which is the domain on which $H$ acts. An element $g\in G$ acts by conjugation on $M$ with the effect of moving the support of elements in the same way as $g^\varphi$ moves the points $\F_p$. For $b^x$ to centralize an element $a\in L$, the support $I$ of $a$ thus must be $\beta$-closed for $\beta=(b^x)^\varphi$.

By Lemma~\ref{lsize} we can assume that $|L|>2$. Thus there exists an element $a\in L$ whose support $I$ is a proper nonempty subset of $\F_p^*$. Thus there exists $\alpha\in\F_p^*$, $\alpha\not\in I$.

That means that if we conjugate $a$ with $-\alpha\in\F_p$, the resulting element $\tilde a$ has support $I-\alpha$. By assumption $0\not\in I-\alpha$, so $\tilde a\in L$. But by Lemma~\ref{closedlemma} we know that $I-\alpha$ is not $\beta$-closed, that is $\tilde a\in L$ is not centralized by $b^x$.
\end{proof}

\begin{corollary}
If $H$ is of affine type, then at least one of conditions \emph{(A), (B)} is violated.
\end{corollary}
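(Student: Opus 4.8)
The plan is to argue by contradiction: assume that $H$ is of affine type and that conditions (A) and (B) both hold, and derive a contradiction. Almost all of the work has already been isolated in the lemmas above, so the proof of the corollary is essentially a matter of assembling them in the right order.

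First I would eliminate the cyclic and small cases. If $H$ were cyclic, then $H'=\gen{1}$, hence $G'\le\ker\varphi=M$; but $M$ is contained in the base group $C_2^p$ and so is intransitive, contradicting (A). Thus $H$ is noncyclic. Since the case $p=7$ has already been settled by direct inspection of the transitive groups of degree $14$, I may also assume $p>7$.

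Next I would push the assumption (A) through the chain of lemmas. By Lemma~\ref{lsize}, (A) forces $|L|>2$; in particular $M\neq\gen{1}$, which via Lemma~\ref{stabimg} puts us in the situation $S^\varphi=H_1\le\F_p^*$ in which the remaining lemmas apply. The last lemma above then yields $Z(S)\le L\le M$. Now $M=\ker\varphi$ is a normal subgroup of $G$, and it is proper because $\im\varphi=H$ is transitive on $p\ge 5$ points, hence nontrivial. Consequently $\gen{Z(S)^G}\le M\neq G$, which contradicts (B). Therefore (A) and (B) cannot hold simultaneously, i.e. at least one of them is violated.

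The only genuinely nontrivial ingredient here is the preceding lemma giving $Z(S)\le L$, whose proof rests on the finite-field combinatorics of Lemma~\ref{closedlemma} (a nonempty $\beta$-closed subset of $\F_p^*$ fails to remain $\beta$-closed under a nonzero additive shift). Once that lemma is available, the corollary is immediate, so I do not anticipate any further obstacle; the assembly above is purely bookkeeping about normal subgroups.
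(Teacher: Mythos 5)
Your proof is correct and takes essentially the same approach as the paper: its one-line proof of the corollary is precisely your final step (from the preceding lemma, (A) gives $Z(S)\le L\le M$, and since $M=\ker\varphi$ is a proper normal subgroup, $\gen{Z(G_1)^G}\le M\neq G$, violating (B)), with the cyclic case, the $p=7$ case, and the reduction to $|L|>2$ handled in the surrounding text exactly as you assemble them.
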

\begin{proof}
If (A) holds, then $\gen{Z(G_1)^G}\le M\neq G$.
\end{proof}

This concludes the proof of Theorem~\ref{thmperm}.

\section*{acknowledgment}

We thank Derek Holt for the library of transitive groups of degree $32$. We also thank an anonymous referee for a number of useful comments, particularly regarding the presentation of older results.


\begin{thebibliography}{99}

\bibitem{AG}
N. Andruskiewitsch and M. Gra\~na, \emph{From racks to pointed Hopf algebras}, Adv. Math. \textbf{178} (2003), no. \textbf{2}, 177--243.

\bibitem{Bru}
R.H. Bruck, \emph{A survey of binary systems},
Ergebnisse der Mathematik und ihrer Grenzgebiete, Springer Verlag, Berlin-G\"ottingen-Heidelberg, 1958.

\bibitem{CanHol}
J.J.~Cannon and D.F.~Holt,
\emph{The transitive permutation groups of degree 32}, Experiment. Math. \textbf{17} (2008), no. \textbf{3}, 307--314.

\bibitem{Car}
J.S. Carter, \emph{A survey of quandle ideas}, in Kauffman, Louis H. (ed.) et al., \emph{Introductory lectures on knot theory}, Series on Knots and Everything \textbf{46}, World Scientific (2012), 22--53.

\bibitem{CJKLS}
J.S. Carter, D. Jelsovsky, S. Kamada, L. Langford, and M. Saito, \emph{Quandle cohomology and state-sum invariants of knotted curves and surfaces}, Trans. Amer. Math. Soc. \textbf{355} (2003), no. \textbf{10}, 3947–-3989.

\bibitem{CESY}
W.E. Clark, M. Elhamdadi, M. Saito and T. Yeatman, \emph{Quandle colorings of knots and applications.} J.~Knot Theory Ramifications \textbf{23} (2014), no. \textbf{6}, 1450035.

\bibitem{CEHSY}
W.E.~Clark, M.~Elhamdadi, X.~Hou, M.~Saito and T.~Yeatman,
\emph{Connected quandles associated with pointed abelian groups}, Pacific J. Math. \textbf{264} (2013), no. \textbf{1}, 31--60.

\bibitem{conwayhulpkemckay}
J.H.~Conway, A.~Hulpke, and J.~McKay,
\emph{On transitive permutation groups},
LMS J. Comput. Math. \textbf{1} (1998), 1--8.

\bibitem{coxeter73}
H.S.M.~Coxeter, \emph{Regular Polytopes}, Courier Dover Publications, New York, 1973


\bibitem{Dri}
V.G. Drinfeld, \emph{On some unsolved problems in quantum group theory}, in Quantum Groups (Leningrad, 1990), Lecture Notes in Math. \textbf{1510}, Springer-Verlag, Berlin, 1992, 1--8.

\bibitem{EGTY}
G. Ehrman, A. Gurpinar, M. Thibault and D.N. Yetter, \emph{Toward a classification of finite quandles}, J. Knot Theory Ramifications \textbf{17} (2008), no. \textbf{4}, 511--520.

\bibitem{Eis}
M. Eisermann, \emph{Yang-Baxter deformations of quandles and racks}, Algebr. Geom. Topol. \textbf{5} (2005), 537--562.

\bibitem{ESG}
P. Etingof, A. Soloviev and R. Guralnick, \emph{Indecomposable set-theoretical solutions to the quantum Yang-Baxter equation on a set with a prime number of elements}, J. Algebra \textbf{242} (2001), no. \textbf{2}, 709--719.

\bibitem{FLS}
A. Fish, A. Lisitsa, D. Stanovsk\'y, \emph{Combinatorial approach to knot recognition}, to appear in R. Horn (ed.), \emph{Embracing Global Computing in Emerging Economies}, Communications in Computer and Information Science, Springer.

\bibitem{Gal-ldq}
V.M. Galkin, \emph{Left distributive finite order quasigroups}, Mat. Issled. \textbf{51} (1979), 43--54 (Russian).

\bibitem{Gal-small}
V.M. Galkin, \emph{Left distributive quasigroups of small orders}, preprint VINITI No. 6510-84, Gor'kovskiy politechnicheskiy tekhnicheskiy institut, Gorkiy (1984) (Russian).

\bibitem{Gal-survey}
V.M. Galkin, \emph{Quasigroups}, Itogi nauki i tekhniki \textbf{26} (1988), 3--44 (Russian). Translated in J. Soviet Math. \textbf{49} (1990), no. \textbf{3}, 941--967.

\bibitem{GAP4}
The GAP Group, GAP -- Groups, Algorithms, and Programming, Version 4.6.3; 2013. \url{http://www.gap-system.org}.

\bibitem{Gra}
M. Gra\~na, \emph{Indecomposable racks of order $p^2$}, Beitr\"age Algebra Geom. \textbf{45} (2004), no. \textbf{2}, 665--676.

\bibitem{guralnickppsimple}
R.M.~Guralnick,
\emph{Subgroups of prime power index in a simple group},
J. Algebra \textbf{81} (1983), no. \textbf{2}, 304--311.


\bibitem{Hou}
X. Hou, \emph{Finite modules over $\Z[t,t^{-1}]$}, J. Knot Theory Ramifications \textbf{21} (2012), no. \textbf{8}, 1250079, 28 pp.

\bibitem{Hul}
A. Hulpke, \emph{Constructing transitive permutation groups}, J. Symbolic Comput. \textbf{39} (2005), no. \textbf{1} (2001), 1--30.

\bibitem{JPSZ}
P. Jedli\v cka, A. Pilitowska, D. Stanovsk\'y and A. Zamojska-Dzienio, \emph{The structure of medial quandles}, to appear in J. Algebra.

\bibitem{Joy}
D. Joyce, \emph{A classifying invariant of knots, the knot quandle}, J. Pure Appl. Alg. \textbf{23} (1982), 37--66.

\bibitem{Joy-simple}
D. Joyce, \emph{Simple quandles}, J. Algebra \textbf{79} (1982), 307--318.


\bibitem{Kaz}
L.S. Kazarin, \emph{Burnside's $p^\alpha$-lemma}, Mat. Zametki \textbf{48} (1990), 45--48, 158 [in Russian]; translation in Math. Notes \textbf{48} (1990), 749--751.

\bibitem{Kik}
M. Kikkawa, \emph{Kikkawa loops and homogeneous loops}, Commentat. Math. Univ. Carol. \textbf{45} (2004), no. \textbf{2}, 279--285.

\bibitem{liebeckpraegersaxl88}
M.W.~Liebeck, C.E.~Praeger, and J.~Saxl,
\emph{On the {O'Nan-Scott} theorem for finite primitive permutation groups},
J. Austral. Math. Soc. Ser. A \textbf{44} (1988), 389--396.

\bibitem{liebecksaxl85}
M.W. Liebeck and J.Saxl,
\emph{Primitive permutation groups containing an element of large prime order},
J. London Math. Soc. (2) \textbf{31} (1985), no. \textbf{2}, 237--249.

\bibitem{Loos}
O. Loos, \emph{Symmetric spaces}, J. Benjamin New York, 1969.

\bibitem{Mat}
S.V. Matveev, \emph{Distributive groupoids in knot theory}, Math. USSR - Sbornik \textbf{47}/\textbf{1} (1984), 73--83.

\bibitem{McC}
J. McCarron, \emph{Connected quandles with order equal to twice an odd prime}, \url{http://arxiv.org/abs/1210.2150}.


\bibitem{LOOPS}
G.P.~Nagy and P.~Vojt\v{e}chovsk\'y, \emph{Loops: Computing with quasigroups and loops in GAP}, version 2.2.0, available at \texttt{http://www.math.du.edu/loops}

\bibitem{NW}
S. Nelson, C.-Y. Wong, \emph{On the orbit decomposition of finite quandles}, J. Knot Theory Ramifications \textbf{15} (2006), no. \textbf{6}, 761--772.

\bibitem{Nob}
N. Nobusawa, \emph{On symmetric structures of a finite set}, Osaka J. Math. \textbf{11} (1974), 569--575.

\bibitem{Pie}
R.S. Pierce, \emph{Symmetric groupoids}, Osaka J. Math. \textbf{15}/\textbf{1} (1978), 51--76.

\bibitem{shareshianmobius}
J.~Shareshian,
\emph{On the {M}\"obius number of the subgroup lattice of the symmetric group},
J. Combin. Theory Ser. A \textbf{78} (1997), no. \textbf{2}, 236--267.

\bibitem{Sta-latin}
D. Stanovsk\'y, \emph{A guide to self-distributive quasigroups, or latin quandles}, Quasigroups and Related Systems \textbf{23} (2015), 1--38.

\bibitem{Sta-involutory}
D. Stanovsk\'y, \emph{The origins of involutory quandles}, arxiv, 2015.

\bibitem{Ste}
S.K. Stein, \emph{On the foundations of quasigroups}. Trans. Amer. Math. Soc. \textbf{85} (1957), 228--256.

\bibitem{Ven}
L. Vendramin, \emph{On the classification of quandles of low order}, J. Knot Theory Ramifications \textbf{21} (2012), no. \textbf{9}, 1250088, 10 pp.

\end{thebibliography}
\end{document}